\documentclass[12pt]{amsart}
\usepackage{geometry}
\usepackage{amsmath}
\usepackage{amsthm}
\usepackage{bbm}
\usepackage{amsfonts,amssymb,bm}
\usepackage{fancyhdr}
\usepackage{mathrsfs}
\usepackage[T1]{fontenc}
\usepackage{lmodern}
\usepackage[normalem]{ulem}
\usepackage{booktabs}
\useunder{\uline}{\ul}{}
\usepackage{mathtools}
\usepackage{appendix}
\usepackage{tikz,tikz-cd}
\usepackage{graphicx}
\usepackage[all]{xy}

\usepackage{float} 
\usepackage{amsthm}

\newcommand{\hvol}{{\widehat{\rm vol}}}
\newtheorem{thm}{Theorem}[section]
\newtheorem{defi}[thm]{Definition}
\newtheorem{lem}[thm]{Lemma}
\newtheorem{cor}[thm]{Corollary}
\newtheorem{prop}[thm]{Proposition}

\newtheorem{ex}[thm]{Example}

\newtheorem{rmk}[thm]{Remark}

\usepackage{times}
\usepackage{hyperref}
\def\ord{{\rm ord}}
\def\min{{\rm min}}
\def\max{{\rm max}}
\def\inf{{\rm inf}}
\def\sup{{\rm sup}}
\def\lim{{\rm lim}}
\def\limsup{{\rm lim\,sup}}

\def\dif{{\rm d}}

\def\Supp{{\rm Supp}}

\def\bl{{\rm Bl}}

\def\log{{\rm log}}
\def\vol{{\rm vol}}

\def\ric{{\rm Ric}}

\def\ord{{\rm ord}}

\def\fs{{\rm FS}}

\def\sym{{\rm Sym}}

\newcommand{\IC}{{\mathbb C}}

\newcommand{\IN}{{\mathbb N}}
 
\newcommand{\IP}{{\mathbb P}} 
\newcommand{\IQ}{{\mathbb Q}} 
\newcommand{\IR}{{\mathbb R}}

\newcommand{\IT}{{\mathbb T}}

\newcommand{\IZ}{{\mathbb Z}}

\newcommand{\CE}{{\mathcal E}}

\newcommand{\CI}{{\mathcal I}}
 
\newcommand{\CK}{{\mathcal K}}
\newcommand{\CL}{{\mathcal L}}

\newcommand{\CO}{{\mathcal O}}

\newcommand{\D}{\Delta}

\newcommand{\Ric}{\mathrm{Ric}}
\newcommand{\sddb}{\sqrt{-1}\partial\bar{\partial}}
\newcommand{\vphi}{\varphi}

\newcommand{\Addresses}{{
		\footnotesize
        
  	    Chi Li \par\nopagebreak
        \textsc{Department of Mathematics, Rutgers University, Piscataway, NJ, 08854, U.S.}\par\nopagebreak
        \textsc{Department of Mathematics,
        Johns Hopkins University, 
        Baltimore, MD, 21218, U.S.  }\par\nopagebreak
         \textit{E-mail address:} 
         {chi.li@rutgers.edu, cli261@jh.edu}

         \medskip

    Minghao Miao \par\nopagebreak
          \textsc{School of Mathematics, Nanjing University, Nanjing 210093, China}\par\nopagebreak
          \textit{E-mail address}:  minghao.miao@smail.nju.edu.cn 

    \medskip

    Kewei Zhang 
    \par\nopagebreak
     \textsc{School of Mathematical Sciences, Beijing Normal University, Beijing 100871, China}\par\nopagebreak
     \textit{E-mail address}:
       {kwzhang@bnu.edu.cn}

}}

\begin{document}

\title{The sharp volume gap for K\"ahler manifolds with positive Ricci curvature}

\author{Chi Li, Minghao Miao, Kewei Zhang}
\date{}
\maketitle

\begin{abstract}
   We prove a sharp volume gap estimate: if an $n$-dimensional compact K\"{a}hler manifold $(X, \omega)$ satisfies $\Ric(\omega)\ge (n+1)\omega$ and $X\not\cong \IP^n$, then $\vol(X, \omega)\le \frac{2n^n}{(n+1)^n}\vol(\IP^n,\omega_{\fs})=\frac{2^{n+1}\pi^n n^n}{(n+1)^n}$. Moreover $\vol(X, \omega)= \frac{2^{n+1}\pi^n n^n}{(n+1)^n}$ occurs if and only if $(X, \omega)$ is biholomorphically isometric to the K\"{a}hler-Einstein metric on the quadric hypersurface $Q^n$ or on the product $\IP^1\times \IP^{n-1}$. We also obtain sharp volume gap estimates for K-semistable toric log Fano pairs. 
    
\end{abstract}

\section{Introduction}

In Riemannian geometry the classical almost rigidity theorem for the sphere, going back to Perelman \cite{P94} and Cheeger--Colding \cite[Theorem A.1.10]{CC97}, says the following.

\begin{thm}
\label{thm:sphere}
    There exists a dimensional constant $\delta(m)>0$ such that the following holds. Let $(M^m,g)$ be a complete Riemannian manifold with $\Ric(g)\geq (m-1) g$. Assume that $$\vol(M,g)>(1-\delta(m))\vol(S^m),$$ then $M$ is diffeomorphic to $S^m$. Here $S^m$ denotes the standard unit sphere.
\end{thm}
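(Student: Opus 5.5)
The plan is to argue by contradiction and compactness, using Cheeger--Colding theory. Suppose no such $\delta(m)$ exists. Then there is a sequence of complete Riemannian manifolds $(M_i,g_i)$ with $\Ric(g_i)\ge (m-1)g_i$ and $\vol(M_i,g_i)\to \vol(S^m)$, none of which is diffeomorphic to $S^m$.

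\textbf{Step 1: compactness and non-collapsing.} By Bonnet--Myers, $\mathrm{diam}(M_i)\le \pi$, so each $M_i$ is compact. By Gromov's precompactness theorem, a subsequence converges in the Gromov--Hausdorff sense to a compact metric space $(Y,d)$. Bishop--Gromov gives $\vol(B_r(x))\ge \frac{\vol(M_i)}{\vol(S^m)}\,V_{1}^m(r)$ for every ball, where $V_{1}^m(r)$ is the volume of the $r$-ball in $S^m$. The sequence is therefore uniformly non-collapsed. Colding's volume convergence theorem then says that the $m$-dimensional Hausdorff measure on $Y$ is the limit of the Riemannian volumes. In particular $\mathcal H^m(Y)=\vol(S^m)$.

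\textbf{Step 2: the limit is the round sphere.} Since the volume ratio tends to $1$, Bishop--Gromov gives, for every point of $Y$ and every $r\le\pi$,
\[
\mathcal H^m(B_r(y))=V_{1}^m(r).
\]
Taking $r=\pi$, the limit has diameter exactly $\pi$. I would then use the volume-cone-implies-metric-cone principle of Cheeger--Colding in its spherical (suspension) form: on limit spaces, equality in Bishop--Gromov with model space $S^m$ around a point $y$ forces $Y$ to be a spherical suspension over a cross-section. Applying this at every point forces the cross-section to be $S^{m-1}$, so $Y$ is isometric to the round $S^m$. A simpler alternative is Colding's almost-maximal-volume result: $\vol(M_i)\to\vol(S^m)$ if and only if $d_{GH}(M_i,S^m)\to0$.

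\textbf{Step 3: from GH-closeness to diffeomorphism.} This is the main obstacle. Gromov--Hausdorff closeness alone does not give a topological conclusion. The key input is Cheeger--Colding's Reifenberg theorem (Theorem A.1.1/A.1.8 of \cite{CC97}). Every tangent cone of the round sphere is $\IR^m$, and volume convergence holds. Hence for $i$ large, every ball of radius $r\le r_0$ in $M_i$ is $\epsilon r$-GH-close to a Euclidean ball, uniformly in $i$. The intrinsic Reifenberg theorem then yields a bi-Hölder homeomorphism $M_i\to S^m$; this is Perelman's homeomorphism conclusion.

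Upgrading to a diffeomorphism requires more. I would follow Cheeger--Colding's Theorem A.1.12 route: since the volume of every small ball is almost Euclidean, one obtains almost-harmonic coordinate-type maps. Alternatively, one can construct a smooth map $M_i\to S^m$ from the first $m+1$ eigenfunctions. On $S^m$ the eigenvalue $m$ eigenspace gives the embedding into $\IR^{m+1}$. By spectral convergence and almost-rigidity of the Lichnerowicz--Obata equality, the $M_i$ have $m+1$ eigenvalues close to $m$, and their eigenfunctions converge. The map $x\mapsto (f_0,\dots,f_m)/|f|$ is then smooth. Integral Hessian estimates (almost $\nabla^2 f_j=-f_j g$) together with the Reifenberg structure show it is a local diffeomorphism of degree one, hence a diffeomorphism, for $i$ large.

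This contradicts the choice of the sequence, which proves the theorem. The delicate point throughout is the passage from pointwise-weak (integral) Hessian control to nondegeneracy of the differential everywhere. This is where Cheeger--Colding's transformation/Reifenberg arguments are essential.
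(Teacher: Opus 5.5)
The paper gives no proof of this statement. It quotes it as background from Perelman \cite{P94}, who obtains a homeomorphism, and from \cite[Theorem A.1.10]{CC97}, who upgrade this to a diffeomorphism using the intrinsic Reifenberg theorem. Your Steps 1--3, up to the bi-H\"older homeomorphism $M_i\to S^m$, are essentially the Cheeger--Colding argument and are fine in outline, with one correction. The uniform Reifenberg condition at all scales $r\le r_0$ on $M_i$ does not follow from the tangent cones of the round sphere being $\IR^m$: the convergence $d_{GH}(M_i,S^m)\to 0$ carries no information below the Gromov--Hausdorff error. It follows instead from the scale-invariant bound you already have in Step 1, namely $\vol(B_r(x))\ge \frac{\vol(M_i)}{\vol(S^m)}V_1^m(r)$ for all $x\in M_i$ and all $r\le\pi$. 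Combine this with the Colding/Cheeger--Colding fact that a ball with almost nonnegative Ricci curvature and almost Euclidean volume is Gromov--Hausdorff close to a Euclidean ball, applied to $B_r(x)$ rescaled to unit size.

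\textbf{The gap: homeomorphism to diffeomorphism.} This final step is precisely what \cite[Theorem A.1.10]{CC97} adds to \cite{P94}, and it cannot come from topology alone because exotic spheres exist. You name two routes but carry out neither; the ``almost-harmonic coordinate'' route is only mentioned.

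For the eigenfunction map $F=(f_0,\dots,f_m)/|f|$, spectral convergence and the Bochner formula give $\int_{M_i}|\nabla^2 f_j+f_j g|^2\to0$, plus gradient upper bounds. This is only $L^2$ control of the Hessian. It gives no pointwise lower bound on the Jacobian of $F$, so $dF$ may degenerate on a set of small measure. The Reifenberg condition does not rescue this: it is a condition on the metric and produces a homeomorphism through a construction, not nondegeneracy of a prescribed map. Knowing $\deg F=\pm1$, by comparison with the Reifenberg homeomorphism, is not enough either.

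The missing ingredient is a mechanism that gives pointwise nondegeneracy. In \cite{CC97} it is extracted from the Reifenberg construction itself. When the space is a smooth manifold, the maps in the construction can be taken smooth. Since $M_i$ is $C^1$-close to Euclidean below some scale (not uniform in $i$), the construction can be stopped at a finite stage where the comparison map is already a diffeomorphism.

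An analytic route through harmonic or eigenfunction maps would instead need pointwise nondegeneracy estimates of the kind given by the Cheeger--Jiang--Naber transformation theorem (not Cheeger--Colding). That is a much later and heavier input, and it would still take some care to pass from local splitting maps to the normalized map $F$.

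Finally, once you have a local diffeomorphism $M_i\to S^m$, no degree argument is needed. For $m\ge2$, a local diffeomorphism from a closed connected manifold to the simply connected $S^m$ is a covering map, hence a diffeomorphism.
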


However, the above constant $\delta(m)$ is implicitly obtained from a contradiction argument. So it is desirable to determine the sharp value of $\delta(m)$. To the best of the authors' knowledge, there has not been much progress in this direction. When $m\leq 3$ one can argue that $\delta(m)=1/2$ by using classification results in low-dimensional topology. But when $m\geq 4$ even a rough lower bound for $\delta(m)$ is still missing. When $m=4$ we expect that the complex projective plane $\mathbb P^2$ attains the second largest volume, but a proof for this seems to be out of reach. We refer the reader to \cite{HM26} for more detailed discussions on this topic.

In K\"ahler geometry, we also have an almost rigidity result, due to Y. Liu \cite[Theorem A.1]{Zhang22}.

\begin{thm}\label{thm:Liu}
    There exists a dimensional constant $\varepsilon(n)>0$ such that the following holds. Let $(X^n,\omega)$ be a compact K\"ahler manifold with $\Ric(\omega)\geq (n+1) \omega$. Assume that $$\vol(X,\omega)>(1-\varepsilon(n))\vol(\IP^n,\omega_{\fs}),$$ then $X$ is biholomorphic to $\IP^n$. Here  $\omega_{\fs}$ denotes the standard Fubini--Study metric on the complex projective space $\mathbb P^n$, normalized so that $\vol(\IP^n,\omega_{\fs})=\int_{\mathbb P^n}\omega^n_{\fs}=(2\pi)^n$.
\end{thm}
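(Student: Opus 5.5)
We argue by contradiction and compactness, using the Cheeger--Colding--Tian theory of non-collapsed limits with Ricci lower bounds together with the algebro-geometric characterization of $\IP^n$ by volume. Throughout, the Ricci bound $\Ric(\omega)\ge (n+1)\omega>0$ forces $X$ to be Fano. It also gives $[\omega]\le \frac{2\pi}{n+1}c_1(X)$ in the sense that $2\pi c_1(X)-(n+1)[\omega]$ is represented by the nonnegative form $\Ric(\omega)-(n+1)\omega$. Comparing with $\vol(\IP^n,\omega_{\fs})=(2\pi)^n$, which corresponds to $c_1(\IP^n)^n=(n+1)^n$, volume closeness is then a statement about the data $(c_1(X)^n,[\omega])$ together with analytic information.

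Suppose the statement fails. Then there is a sequence $(X_i,\omega_i)$ with $\Ric(\omega_i)\ge(n+1)\omega_i$, $X_i\not\cong\IP^n$ and $\vol(X_i,\omega_i)\to(2\pi)^n$. By Bishop--Gromov the volume is at most $(2\pi)^n$, and the diameter is bounded by Myers. Gromov compactness gives a Gromov--Hausdorff limit $(Z,d)$, and by Colding's volume convergence its volume equals the maximal one. The volume rigidity (Cheeger--Colding) then says that the volume ratio is maximal at all scales, so $Z$ is a metric cone near every point at every scale. In the Kähler setting we want to conclude that $Z$ is isometric to $(\IP^n,\omega_{\fs})$, whose real diameter $\pi/\sqrt{2(n+1)}$ corresponds to the equality case.

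Rather than pushing the metric rigidity, the cleaner route uses the $\delta$-invariant and volume. By the Ricci lower bound, Berman/Fujita-type estimates give $\vol(X,\omega)\le \left(\frac{2\pi}{n+1}\right)^n c_1(X)^n$. Hence $c_1(X_i)^n\ge (n+1)^n(1-o(1))$. By Fujita's theorem, $c_1(X)^n\le (n+1)^n$ for K-semistable Fano manifolds, with equality only for $\IP^n$. Here $X_i$ need not be K-semistable, but $\Ric\ge(n+1)\omega$ gives Greatest Ricci lower bound $\beta(X_i)=1$, hence $\delta(X_i)\ge1$, so $X_i$ is K-semistable. Boundedness of K-semistable Fano manifolds of fixed dimension (Jiang) then leaves finitely many deformation types. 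The anticanonical volume is an integer, so $c_1(X_i)^n\ge (n+1)^n(1-o(1))$ forces $c_1(X_i)^n=(n+1)^n$ once $i$ is large. Fujita's equality case gives $X_i\cong\IP^n$, which is the contradiction.

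The main obstacle is making the first step precise, namely passing from $\Ric(\omega)\ge(n+1)\omega$ to the volume bound in terms of $c_1(X)^n$. There is also a subtlety: $[\omega]$ is not equal to $\frac{2\pi}{n+1}c_1(X)$, only dominated by it. I would handle this by noting that $2\pi c_1(X)-(n+1)[\omega]$ is nef, represented by a nonnegative closed form, so expanding $\left(2\pi c_1(X)\right)^n=\left((n+1)[\omega]+\text{nef}\right)^n\ge (n+1)^n[\omega]^n$. This gives $\vol(X,\omega)\le \left(\frac{2\pi}{n+1}\right)^n c_1(X)^n$. With this estimate in hand, the argument reduces to Fujita's sharp volume bound and integrality, and the compactness argument is not needed beyond the reduction to the K-semistable case.
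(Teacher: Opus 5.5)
Your argument fails at the claim that $\Ric(\omega)\ge (n+1)\omega$ gives $\beta(X_i)=1$, hence $\delta(X_i)\ge 1$ and K-semistability. The class $L:=\frac{1}{2\pi}[\omega]$ is an arbitrary ample $\IR$-class, not $\frac{1}{n+1}c_1(X)$. The hypothesis only yields $\beta(X,L)\ge n+1$, i.e.\ (by the easy direction of Theorem \ref{thm:bbj}) $s(X,L)\ge n+1$ and $\delta(X,L)\ge n+1$ for this particular $L$. It says nothing about $\delta(X,-K_X)$.

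In fact every Fano manifold satisfies the hypothesis. By Yau's theorem, pick $\omega_0$ with $\Ric(\omega_0)>0$, so $\Ric(\omega_0)\ge c\,\omega_0$ for some $c>0$, and set $\omega=\frac{c}{n+1}\omega_0$. Even on the anticanonical ray, $\omega=\frac{t}{n+1}\omega'$ with $\Ric(\omega')\ge t\omega'$ and $t<\beta(X)<1$ works for K-unstable $X$. So Fujita's bound $(-K_X)^n\le (n+1)^n$ is unavailable. Your inequality $(-K_X)^n\ge (n+1)^n\vol(L)$ then cannot force $X\cong\IP^n$: for $n\ge 4$ the K-unstable toric Fano manifold $\IP_{\IP^{n-1}}(\CO\oplus\CO(n-1))$ has $(-K_X)^n=\frac{(2n-1)^n-1}{n-1}>(n+1)^n$ (e.g.\ $800>625$ for $n=4$). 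So the integrality argument has nothing to act on. The information you discarded, $\delta(X,L)\ge n+1$ for the class of $\omega$ itself, is exactly what must be used.

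A correct proof has to bound $\beta(X,L)^n\vol(L)$ for an arbitrary, possibly irrational, ample class $L$. Fujita's blow-up-at-a-point estimate gives $\delta(X,L)^n\vol(L)\le (n+1)^n$, which is what actually yields $\vol(X,\omega)\le (2\pi)^n$. Bishop--Gromov does not give this: it compares with the round $S^{2n}$, whose volume is strictly larger for $n\ge 2$. For the same reason, the Cheeger--Colding cone rigidity in your first paragraph does not apply.

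The gap away from $\IP^n$ then needs a further genuine input. One option is Liu's argument in \cite{Zhang22}. The other, as in this paper, is Theorem \ref{thm:rational}, which gives $\beta(X,L)^n\vol(L)\le 2n^n$ for $X\not\cong\IP^n$ via minimal rational curves. Applied to $L=\frac{1}{2\pi}[\omega]$ with $\beta(X,L)\ge n+1$, it yields the statement with $\varepsilon(n)=1-\frac{2n^n}{(n+1)^n}$.
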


As in the Riemannian case, the constant $\varepsilon(n)$ was only implicitly obtained.
The goal of this work is to determine the sharp value of $\varepsilon(n)$.
Note that when $n=1$, the Ricci curvature condition alone readily implies that $X\cong\IP^1$. So we only need to consider the case when $n\geq 2$.
Our main result is the following.

\begin{thm}\label{thm:gap}
    The optimal constant $\varepsilon(n)$ in Theorem \ref{thm:Liu} is $\varepsilon(n)=1-\frac{2n^{n}}{(n+1)^n}.$ Moreover, if $(X^n,\omega)$ is a compact K\"ahler manifold with $\Ric(\omega)\geq (n+1) \omega$ and 
    \begin{equation}
        \label{eq:vol=2nn}
        \vol(X,\omega)=\frac{2n^{n}}{(n+1)^n}\vol(\IP^n,\omega_{FS}),
    \end{equation} then $(X,\omega)$ is  biholomorphically isometric to the quadric hypersurface $Q^n\subset\mathbb P^{n+1}$ or $\mathbb P^1\times\mathbb P^{n-1}$, equipped with their standard K\"ahler--Einstein metrics with Einstein constant $(n+1)$.
\end{thm}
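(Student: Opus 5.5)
We would reduce the metric statement to an algebro-geometric statement about Fano manifolds via the delta invariant and normalized volumes. Since $\Ric(\omega)\ge (n+1)\omega$, $X$ is Fano, $[\omega]\le \frac{2\pi}{n+1}c_1(X)$ after normalization, and by the standard Bishop-type estimate one has $\vol(X,\omega)\le \left(\frac{2\pi}{n+1}\right)^n (-K_X)^n$. More sharply, the Ricci lower bound gives $\delta(X)\ge$ the greatest Ricci lower bound $\beta(X)=1$ in the normalized sense. Therefore $X$ is K-semistable and $\vol(X,\omega)\le \frac{(2\pi)^n}{(n+1)^n}(-K_X)^n$. The problem then becomes a volume gap for K-semistable Fano manifolds: if $X\not\cong\IP^n$ is K-semistable, we want to show $(-K_X)^n\le 2n^n$, with equality only for $Q^n$ or $\IP^1\times\IP^{n-1}$. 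This matches the constant, since $(-K_{Q^n})^n=2n^n$ and $(-K_{\IP^1\times\IP^{n-1}})^n=\binom{n}{1}2\cdot n^{n-1}=2n^n$.

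For the algebraic gap we would use Liu's local-to-global inequality $(-K_X)^n\le \frac{(n+1)^n}{n^n}\hvol(x,X)$ for K-semistable $X$ and any point $x$, applied to a more refined local invariant. For smooth $X$, $\hvol(x,X)=n^n$, which only recovers the $\IP^n$ bound. So the key step is to improve Liu's argument: when $X\not\cong\IP^n$, find a point $x$ and a valuation (or a filtration) adapted to the geometry where the estimate gains a factor $\frac{2}{(1+1/n)^n}$. Our first attempt is to use the characterization of $\IP^n$ via rational curves: if $X\not\cong\IP^n$, then by Cho–Miyaoka–Shepherd-Barron/Kebekus there is a point $x$ through which the minimal rational curves have anticanonical degree $\le n$, or the variety of minimal rational tangents is not all of $\IP(T_xX)$. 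Then we would compare $\delta$ with the $S$-invariant of a quasi-monomial valuation that is weighted along a minimal rational curve. Alternatively, we would bound the volume of $-K_X - t E$ on $\bl_x X$ using a rational curve $C$ with $-K_X\cdot C\le n$ through $x$, giving $\vol(-K_X-tE)\le$ an explicit two-piece expression. Integrating this to obtain $S(E)$ and imposing $A(E)=n\ge S(E)$ (from K-semistability) should yield $(-K_X)^n\le 2n^n$.

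The main obstacle is this refined local estimate: producing, in every non-$\IP^n$ case, a valuation whose expected vanishing order is large enough to give exactly the constant $2n^n$. We also need to track equality. Equality should force the volume function to be extremal, meaning the Zariski decomposition is exactly as for the quadric or for $\IP^1\times\IP^{n-1}$. One would then identify $X$ by classification: Fano manifolds of index $n$ with degree $2n^n$ (Kobayashi–Ochiai/Fujita gives $Q^n$), together with those having a $\IP^{n-1}$-bundle structure. Finally, for the metric rigidity, equality in $\vol(X,\omega)\le\frac{(2\pi)^n}{(n+1)^n}(-K_X)^n$ forces $[\omega]=\frac{2\pi}{n+1}c_1(X)$ and $\Ric(\omega)=(n+1)\omega$ after the equality case of the trace estimate. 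So $\omega$ is Kähler–Einstein, and by Bando–Mabuchi uniqueness it is the standard metric up to automorphism.
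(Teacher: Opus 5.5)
There is a genuine gap in your first reduction. The hypothesis $\Ric(\omega)\ge (n+1)\omega$ concerns a metric in an arbitrary, possibly irrational, Kähler class $[\omega]$. It says nothing about metrics in $2\pi c_1(X)$.

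What it actually gives is $\beta(X,L)\ge n+1$ for $L=\frac{1}{2\pi}[\omega]$, that is, $s(X,L)\ge n+1$ and $\delta(X,L)\ge n+1$. It does not give $\delta(X,-K_X)\ge 1$, so $X$ need not be K-semistable. In fact, by Yau's theorem every Fano manifold carries a Kähler metric with $\Ric(\omega)>0$. After rescaling, such a metric satisfies $\Ric(\omega)\ge (n+1)\omega$, so no K-semistability can ever be extracted from the hypothesis.

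The inequality you do keep, $\vol(X,\omega)\le \bigl(\frac{2\pi}{n+1}\bigr)^n(-K_X)^n$, cannot close the argument. For example, $X=\bl_p\IP^3$ has $(-K_X)^3=64-8=56>54=2\cdot 3^3$, and it admits such metrics. If your reduction were valid, the theorem would follow at once from the K-semistable case, which is the main result of [LM25]. The point of the paper is instead to bound the scale-invariant $\beta$-volume $\beta(X,L)^n\vol(L)$ for every ample $\mathbb{R}$-line bundle $L$.

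The core estimate is also left open in your proposal, and the direction you sketch would not work. To get an upper bound on the volume from $\delta\le A(E)/S(L;E)$, you need a lower bound on $S(L;E)$, hence a lower bound on $\vol(\pi^*L-xE)$. An upper bound on $\vol(-K_X-tE)$, as you propose, gives an upper bound on $S$ and is not informative.

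The paper proceeds as follows.
\begin{itemize}
\item It takes a minimal rational curve $C$ realizing $d=l_X$.
\item It uses the nef threshold to get $L\cdot C\le d/s(X,L)$.
\item It blows up along $C$ itself, not at a point: an ordinary blowup when $d=2$, and a $(1^{d-2},2^{n+1-d})$-weighted blowup when $d\ge 3$.
\item The splitting of the normal bundle of $C$ then yields $\vol(\pi^*L-xE)\ge\vol(L)-\phi_d(x)$.
\item This gives $\beta^n\vol(L)\le 2n^n$ when $d=2$ and a strict inequality when $3\le d\le n-1$. The case $d\ge n$ is handled by the classification results.
\end{itemize}

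The equality case is also harder than your sketch suggests, because $L$ may be irrational. The paper needs the Okounkov-body argument to show $\epsilon(L;C)\ge n$, and then a Seshadri-constant characterization to identify $\IP^1\times\IP^{n-1}$. Your final metric-rigidity step, forcing $[\omega]=\frac{2\pi}{n+1}c_1(X)$ and applying Bando--Mabuchi, is fine once $X$ is known to be $Q^n$ or $\IP^1\times\IP^{n-1}$.
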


When $n=2$, the Ricci curvature condition implies that $X$ is del Pezzo and that $\vol(X,\omega)\leq \frac{(2\pi)^2}{9}(-K_X)^2$. Then, due to the standard classification of del Pezzo surfaces, our result follows immediately (note also that in this case $Q^2\cong \IP^1\times \IP^1$). Therefore, throughout this paper, we always assume that $n\geq 3$.



To explain the idea of the proof, first note that the positive Ricci curvature condition ensures that $X$ is a Fano manifold, whose K\"ahler cone coincides with the ample cone, so $\frac{1}{2\pi}[\omega]$ can be viewed as the numerical class of an ample $\IR$-line bundle $L$.
We denote by $\beta(X,L)$ the greatest Ricci lower bound of $L$, defined as
$$
\beta(X,L):=\sup\{t>0: \text{there exists }\omega\in2\pi c_1(L)\text{ with }\Ric(\omega)\geq t\omega\}.
$$
Then the positive Ricci curvature condition implies that $\beta(X,L)\geq n+1$.
In order to prove Theorem \ref{thm:gap} we will mainly investigate the \emph{$\beta$-volume}:
$
\beta(X,L)^n\cdot \vol(L).
$
A key property of the $\beta$-volume is its invariance under rescaling, in the sense that $\beta(X,rL)^n\cdot\vol(rL)=\beta(X,L)^n\cdot \vol(L)$ for $r\in \IR_{>0}$. Moreover, in \cite{Zhang22} it was shown that $\beta(X,L)^n\cdot \vol(L)\leq (n+1)^n$, with equality holding if and only if $X\cong\mathbb P^n$. For more details on this quantity, we refer to \S \ref{sec:pre}. In this work we establish the following gap theorem for the $\beta$-volume, which will imply Theorem \ref{thm:gap}. 
\begin{thm}\label{thm:rational}
   Let $X$ be an $n$-dimensional smooth Fano manifold and $L$ be an ample $\IR$-line bundle. If $X$ is not biholomorphic to $\IP^n$, then $\beta(X,L)^n\cdot \vol(L)\leq 2n^n$. The equality occurs only when $X\cong Q^n$ or $X\cong \IP^1\times \IP^{n-1}$. 
\end{thm}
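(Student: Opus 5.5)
The plan is to bound $\beta(X,L)$ from above by two different geometric inequalities: a valuative bound coming from a point $x\in X$, and a curve bound coming from a minimal rational curve through $x$. I will then show the two cannot both be weak unless $X\cong\IP^n$. First I would record the two inputs from \S\ref{sec:pre}. The first is the valuative characterization $\beta(X,L)\le \inf_E A_X(E)/S_L(E)$, where
$$S_L(E)=\frac{1}{\vol(L)}\int_0^\infty \vol(L-tE)\,dt,$$
which is the inequality underlying \cite{Zhang22}. The second is that $-K_X-\beta(X,L)L$ is nef, obtained as a limit of $\Ric(\omega)\ge t\omega$. By rescaling invariance I may normalize $\beta(X,L)=n$ whenever convenient. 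The statement then becomes $\vol(L)\le 2$, with equality only for $Q^n$ and $\IP^1\times\IP^{n-1}$. In both model cases $L=-K_X/n$, respectively $L=\mathcal O(2/n,1)$, and a line $\ell$ satisfies $L\cdot\ell=1$ and $\vol(L)=2$.

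The second step uses that $X\not\cong\IP^n$. By the characterization of projective space of Cho--Miyaoka--Shepherd-Barron and Kebekus, a minimal dominating family of rational curves through a general point $x$ has anticanonical degree $-K_X\cdot C=p+2\le n$. Here $p$ is the dimension of the variety of minimal rational tangents $\mathcal C_x\subset\IP(T_xX)$, so $p\le n-2$. Nefness of $-K_X-\beta L$ then gives
$$\beta\, L\cdot C\le p+2\le n .$$

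The third step is the main one and the main obstacle: improve Zhang's estimate $\vol(L-tE)\ge \vol(L)-t^n$ for $E$ the exceptional divisor of the blowup at $x$. The crude estimate gives only $\beta^n\vol(L)\le(n+1)^n$. Set $d=L\cdot C$. For $t>d$, every section of $L$ vanishing to order $>t$ at $x$ vanishes along every minimal curve through $x$, hence along the cone $\mathrm{Locus}(C)_x$ of dimension $p+1$. I would exploit this in two ways. The first is to replace $E$ by the exceptional divisor of a weighted (or quasi-monomial) blowup adapted to this cone; the second is to bound $\vol(L-tE)$ from below by restriction to the strict transform of the cone, where the Seshadri-type constraint $\epsilon(L,x)\le d$ forces extra vanishing. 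Either way, the target is an inequality of the form
$$\frac{A_X(E)}{S_L(E)}\le F(p)\,\bigl(d^{\,n}/\vol(L)\bigr)^{1/n}\cdot\frac{1}{d}$$
for an explicit function $F$.

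Combining this with $\beta d\le p+2$ and optimizing over $d$ and $p$ should give $\beta^n\vol(L)\le 2n^n$. I expect the maximum to be attained exactly at $p=n-2$ with $\mathcal C_x$ a smooth quadric, and at $p=n-2$ with $\mathcal C_x$ a linear subspace plus an extra ruling, matching the two models. I would first test the inequality against $Q^n$ and $\IP^1\times\IP^{n-1}$ to calibrate $F$.

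For the equality case, I would track where each inequality becomes an equality. This should force $-K_X=\beta L$, so $X$ is K\"ahler--Einstein type with $-K_X\cdot C=n$ and $(-K_X)^n=2n^n$. It should also force every minimal curve to be a line through $x$ with $\mathcal C_x$ of maximal dimension $n-2$. A Fano manifold covered by lines with $p=n-2$ is either $Q^n$ or a $\IP^{n-1}$-bundle over a curve, by the Cho--Miyaoka--Shepherd-Barron type theory and the result of Miyaoka. The volume count $2n^n$ then pins down $\IP^1\times\IP^{n-1}$ among the latter, and in that case $L$ is forced by $-K_X-\beta L$ being nef with equality on both rulings.
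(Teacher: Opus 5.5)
\textbf{There is a genuine gap: your Step 3 is the whole theorem, and it is never carried out.} The function $F(p)$ is never produced, and ``optimizing over $d$ and $p$ should give $2n^n$'' is a hope, not an argument.

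Moreover, the ordinary blowup of a point is the wrong valuation. Take $X=\IP^1\times\IP^{n-1}$ and $L=-K_X$ (so $\beta=1$), with its minimal family of $\IP^1$-fibres ($p=0$, $L\cdot C=2$). The natural sharpening your vanishing-along-$C$ constraint gives is $L^n-\vol(\pi^*L-tE)\le t^n-(t-2)^n$ for $t\ge 2$. On $[2,n]$ this is already the exact value, by the toric picture. Inserting it into $S_L(E)$ gives, for $n=3$, only $S_L(E)\ge 2.89\ldots$, short of $A_X(E)=3$. So this route cannot rule out volumes slightly above $2n^n$ when the minimal family has $p=0$.

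The missing idea is to use valuations centred on the whole minimal rational curve $C$. (These are the limit of your adapted weighted blowups as the weight along $C$ tends to $0$.) For $d=-K_X\cdot C=2$ the normal bundle is trivial, so $\mathcal I_C^y/\mathcal I_C^{y+1}\cong\mathcal O_C^{\oplus\binom{n-2+y}{n-2}}$. This gives $\vol(\pi^*L-xE)\ge L^n-n(L\cdot C)x^{n-1}$ with $A_X(E)=n-1$. After normalizing $s(X,L)=1$ (so $L\cdot C\le 2$), you get $\beta^nL^n\le\delta^{n-1}L^n\le 2n^n$ directly. For $3\le d\le n-1$ the paper uses the $(1^{d-2},2^{n+1-d})$-weighted blowup along $C$, adapted to $N_{C/X}=\mathcal O(1)^{\oplus(d-2)}\oplus\mathcal O^{\oplus(n+1-d)}$, and gets strict inequality. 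For $d\ge n$ it invokes the classification: $l_X=n+1$ iff $X\cong\IP^n$, and $l_X=n$ iff $X\cong Q^n$ or $X\cong\mathrm{Bl}_A\IP^n$ with $A=H\cap S_k$ in a hyperplane. It then checks $\mathrm{Bl}_A\IP^n$ by hand.

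\textbf{The equality analysis also breaks down, in three places.}

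First, your classification of the $p=n-2$ case omits $\mathrm{Bl}_A\IP^n$ with $A=H\cap S_k$, $2\le k\le n$. These are neither quadrics nor $\IP^{n-1}$-bundles over curves, yet they carry a family with $p=n-2$. The paper excludes them via $(-K_X)^n=\frac{kn^n-(n+1-k)^n}{k-1}<2n^n$.

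Second, the volume count does not single out $\IP^1\times\IP^{n-1}$ among Fano $\IP^{n-1}$-bundles over curves. $\mathrm{Bl}_{\IP^{n-2}}\IP^n$ is such a bundle over $\IP^1$ and also has $(-K_X)^n=2n^n$. The paper needs an explicit computation of $S(L;E)$ and $S(L;\tilde H)$ to show $\beta^nL^n<2n^n$ for every ample $L$ on it.

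Third, nothing in your plan forces $-K_X=\beta L$, and $L$ may be an irrational class. The paper instead shows that equality forces $\vol(\pi^*L-xE)=(\pi^*L-xE)^n$ for $0\le x\le n$ on the blowup along $C$. It converts this into $\epsilon(L;C)\ge n$ via Okounkov bodies, which is exactly where irrational $L$ needs care. From this it deduces $\epsilon(-K_X;C)\ge n$, and concludes by a Seshadri-constant characterization of $\IP^1\times\IP^{n-1}$.
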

Theorem $\ref{thm:rational}$ extends the main result of \cite{LM25} where $L=-K_X$ and $X$ is assumed to be K-semistable (in which case $\beta(X,-K_X)=1$ by \cite[Theorem 1.5]{Li17}). The proof of Theorem \ref{thm:rational} follows the same route as in \cite{LM25}, building on the volume estimate in \cite{Fvol} and the existence of minimal rational curves $C$ on Fano manifolds due to Mori \cite{Mor79}.
However, a crucial difficulty arises for the characterization of the equality case $\beta(X,L)^n\cdot L^n= 2n^n$, since it is possible that this equality is attained by some irrational class,  making the analysis for $\mathbb Q$-line bundles in \cite{LM25} inapplicable. To deal with this subtlety, we will follow the strategy in \cite[\S 4]{Zhang22}, by using the positivity theory of Okounkov bodies \cite{KL17}.


In fact, the same method allows us to prove a more general volume estimate for twisted K\"{a}hler-Einstein metrics.  
\begin{thm}\label{thm-twFano}
     Assume $(X, \eta)$ is a twisted Fano pair (see Definition \ref{defi:twisted}) with $X\not\cong \IP^n$ and $L$ be an ample $\IR$-line bundle. Then either $\beta_\eta(X, L)^n\cdot \vol(L)\le 2n^n$ (see \eqref{eq-beeta}), or every minimal rational curve $C$ on $X$ has anti-canonical degree $-K_X\cdot C\ge  n$.

    If furthermore $X$ is itself a Fano manifold not isomorphic to $\IP^n$ and $\eta=2\pi\{\Delta\}+\eta_1$ where $\Delta$ is an effective $\mathbb Q$-divisor and $\eta_1\ge 0$ is a closed positive (1,1)-current with a nef class $[\eta_1]$, then $\beta_\eta(X, L)^n\cdot \vol(L)\le 2n^n$, and the equality occurs only when $X\cong Q^n$ or $\IP^1\times \IP^{n-1}$ and $\eta=0$.
\end{thm}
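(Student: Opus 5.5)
The plan is to run the argument of \cite{LM25} and \cite{Zhang22} on a minimal rational curve, replacing $-K_X$ by the twisted class. Set $\beta=\beta_\eta(X,L)$. By the definition in \eqref{eq-beeta}, for every $t<\beta$ there is $\omega\in 2\pi c_1(L)$ with $\Ric(\omega)-\eta\ge t\omega$. Taking classes, $-K_X-\frac{1}{2\pi}[\eta]-tL$ is pseudoeffective, and in fact positive in the sense required for Ding/valuative estimates. The first step is the twisted analogue of the valuative criterion: for every divisorial valuation $v$ (or point $p$),
\[
\beta\, S_L(v)\le A_{X,\eta}(v)\le A_X(v),
\]
where $A_{X,\eta}$ is the log discrepancy twisted by the Lelong numbers of $\eta$. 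This is obtained by approximating $\eta$ and using the $\delta$-invariant bound $\delta(L)\ge\beta$ as in \cite{Zhang22}. We apply it to $v=\ord_p$ at a general point $p$. By Fujita's local volume estimate \cite{Fvol}, refined along a rational curve as in \cite{LM25}, this gives $\beta^n\vol(L)\le (n+1)^n$. The refinement needs more: a minimal rational curve $C$ through a general $p$ with $-K_X\cdot C=d+2\le n+1$ and $d<n-1$ (that is, $-K_X\cdot C< n$). Using the Okounkov body of $L$ with respect to a flag adapted to the family of minimal rational curves through $p$, whose tangent directions span a $d$-dimensional variety of minimal rational tangents, we get
\[
\vol(L)\le \frac{(L\cdot C)\,\cdots}{\cdots}.
\]
This is the estimate of \cite{LM25}: sections vanishing to order $m$ at $p$ along $C$ are controlled by $L\cdot C$, and $\beta\, L\cdot C\le (-K_X-\frac{1}{2\pi}[\eta])\cdot C\le -K_X\cdot C$. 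Combining, $\beta^n\vol(L)\le f(d)$, and $f(d)\le 2n^n$ exactly when $d+2\le n$. This proves the first dichotomy. For irrational $L$, approximate by $\IQ$-classes, since both sides are continuous.

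For the second part, $X$ is Fano and not $\IP^n$. By Cho--Miyaoka--Shepherd-Barron / Kebekus, a minimal rational curve with $-K_X\cdot C=n+1$ forces $X\cong\IP^n$. So we may assume $-K_X\cdot C\le n$, and the first part applies except in the borderline case $-K_X\cdot C=n$. I would set this case up so the bound still reads $\le 2n^n$, with $f(n-2)=2n^n$. The structure of $\eta$ is used here: nefness of $[\eta_1]$ and effectivity of $\Delta$ make $[\eta]\cdot C\ge 0$ for a general minimal curve. The reason is that a general member of the covering family is not contained in $\Supp\Delta$, and since $C$ moves, the nef part $[\eta_1]$ pairs nonnegatively with it. Hence the inequality $\beta\,L\cdot C\le -K_X\cdot C-[\eta]\cdot C$ holds.

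The main obstacle is the equality case, especially when $L$ is irrational. If $\beta^n\vol(L)=2n^n$, every inequality in the chain must be an equality. First, $[\eta]\cdot C=0$, which together with $\Delta\ge 0$ and the bigness coming from equality should force $\eta=0$. Second, $-K_X\cdot C=n$. Third, the Okounkov body must coincide with the extremal simplex-type body. I would follow \cite[\S 4]{Zhang22} and use \cite{KL17}: equality in the volume bound forces the Okounkov body to be the extremal body, and this in turn forces $L$ to be proportional to a specific class with $\beta L\equiv -K_X$ restricted appropriately, and in particular rational. Once $L$ is rational, the characterization of \cite{LM25} applies. Minimal rational curves of degree $n$ correspond to VMRT of dimension $n-2$, which by Miyaoka / Dedieu--Höring gives $X\cong Q^n$, or else a $\IP^{n-1}$-fibration leading to $\IP^1\times\IP^{n-1}$. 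The delicate point I expect is extracting rationality of $L$, or working directly with the irrational class, from equality of Okounkov bodies.
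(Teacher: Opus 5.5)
Your proposal has a genuine gap at its core. The key estimate is only a placeholder, $\vol(L)\le \frac{(L\cdot C)\cdots}{\cdots}$, and the numerology you attach to it is not supported.

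\textbf{The first part.} The paper does not use $\ord_p$ with a VMRT-adapted flag. It uses valuations centred on $C$ itself.
For $-K_X\cdot C=2$ it takes the blowup along $C$, with $A_X(E)=n-1$. After normalizing $s_\eta(X,L)=1$ one has $L\cdot C\le 2$, so $\vol(\pi^*L-xE)\ge L^n-2nx^{n-1}$, and hence $\beta_\eta^nL^n\le\delta_\eta^{n-1}L^n\le 2n^n$.
For $3\le d\le n-1$ it takes the $(1^{d-2},2^{n+1-d})$-weighted blowup along $C$, with $A=2n-d$. This gives the strict bound of \cite[Proposition 4.8]{LM25}.

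Your claim $f(n-2)=2n^n$ says that a single minimal rational curve of degree $n$ already forces $\beta_\eta^n\vol(L)\le 2n^n$. You give no derivation, and the paper has no curve-based estimate in degree $n$. That is exactly why the first assertion is stated as a dichotomy. Two smaller points:
\begin{itemize}
\item $d<n-1$ means $-K_X\cdot C\le n$, not $<n$.
\item The Cho--Miyaoka--Shepherd-Barron criterion concerns the minimal degree $l_X$, not one curve of degree $n+1$. For example, $\bl_p\IP^n$ contains curves of degree $n+1$ with splitting type $\mathcal O(2)\oplus\mathcal O(1)^{n-1}$.
\end{itemize}

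For Fano $X$, the paper handles $l_X=n$ through the classification in Theorem~\ref{thm-lX}: either $X\cong Q^n$, or $X\cong\bl_A\IP^n$ with $A$ a codimension-two complete intersection in a hyperplane.
\begin{itemize}
\item The blowups are excluded by explicit computation (Proposition~\ref{prop:d=n}). For $d_Y=1$ one tests $\delta_\eta$ against both $\ord_E$ and $\ord_{\tilde H}$ and optimizes over the nef cone, getting $n^n(1+w)/w^n<2n^n$. For $d_Y\ge 2$ one uses $(-K_X)^n<2n^n$.
\item $Q^n$ is handled via Picard number one.
\end{itemize}
You read this classification as ``$Q^n$ or a $\IP^{n-1}$-fibration leading to $\IP^1\times\IP^{n-1}$'', and that is wrong. $\IP^1\times\IP^{n-1}$ has $l_X=2$ and arises in the trivial-normal-bundle case. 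The $\IP^{n-1}$-bundle $\bl_A\IP^n$ (with $d_Y=1$) is a different variety, and its $\beta$-volume is strictly below $2n^n$.

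\textbf{The equality case.} This does not go through as you describe, for three reasons.

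First, $[\eta]\cdot C\ge 0$ needs no structure on $\eta$: it holds for every closed positive current because $C$ is free. The hypothesis $\eta=2\pi\{\Delta\}+\eta_1$ is used elsewhere, to reduce to a log Fano pair $(X,\Delta)$.

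Second, $[\eta]\cdot C=0$ does not force $\eta=0$. On $\IP^1\times\IP^{n-1}$, a small multiple of a Fubini--Study form pulled back from $\IP^{n-1}$ is an admissible $\eta_1$ with zero degree on the $\IP^1$-fibres. The paper first identifies $X$, and then uses that $t\mapsto(-K_X-t[\eta]/2\pi)^n$ is strictly decreasing when $[\eta]\neq0$.

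Third, the step ``equality of Okounkov bodies forces $L$ rational, then apply \cite{LM25}'' is unjustified. Moreover, the equality characterization in \cite{LM25} is for $L=-K_X$ with no twist.

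What the Okounkov bodies actually provide is Proposition~\ref{prop:e=vol}. Its proof uses a flag inside $E\cong C\times\IP^{n-2}$ on $\bl_CX$, a product structure of the body near the origin, and \cite{KL17}. It shows that, even for an $\IR$-class, $\vol(\pi^*L-xE)=(\pi^*L-xE)^n$ on $[0,a]$ forces $\pi^*L-xE$ to be nef there. The paper then argues:
\begin{itemize}
\item Equality gives $\epsilon(L;C)\ge n$.
\item Since $c_1(X,\eta)-L$ and $[\eta_1]$ are nef, this gives $\epsilon(-K_X-\Delta;C)\ge n=\mathrm{codim}\,C+1$.
\item The log Seshadri characterization, Proposition~\ref{prop:seshadri-for-log-fano}, then yields $X\cong\IP^1\times\IP^{n-1}$. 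That proposition rests on the optimal bend-and-break bound of \cite{JLR26}; Remark~\ref{rmk-logSe} explains why $\epsilon>\mathrm{codim}$ is not enough when there is a boundary.
\end{itemize}
None of these ingredients appears in your plan. So the rationality of $L$, which you flag as the delicate point, is not the real obstacle, and as written the proposal proves neither part of the statement.
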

In the toric case, similar to \cite[Section 3]{LM25}, we can use the fact that any toric manifold has an embedded projective space (of lower dimension) with trivial normal bundles (\cite{CFH14}) and directly prove the sharp volume gap estimates for K-semistable toric log Fano manifolds (see Theorem \ref{thm:toricPair}). Combined with a logarithmic generalization of Moraga-S\"{u}ss' gap estimates \cite{MS24} (see Appendix \ref{app-vol}), we can prove the gap hypothesis for all K-semistable toric log Fano pairs. This conjecture was first proposed in \cite[(1.3)]{AB25} and was verified under some conditional assumptions in \cite[Theorem 3.1]{AB25}. We refer our readers to a series of work \cite{AB24,Ber25}) for the arithmetic application. 
\begin{thm}\label{thm:toricPair}
    Let $(X,\D)$ be any K-semistable toric log Fano pair with $X\ncong \IP^n$. We have the sharp estimate:
    $$
    (-K_X-\D)^n\leq (-K_{\IP^1\times \IP^{n-1}})^n=2n^n.
    $$ 
    Moreover, the equality holds only when $(X, \Delta)\cong (\IP^1\times \IP^{n-1}, 0)$. 
\end{thm}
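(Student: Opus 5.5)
Theorem \ref{thm:toricPair} splits into two cases according to the size of the toric boundary coefficients.

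\textbf{Small-coefficient case.} Here I follow the strategy of \cite[Section 3]{LM25} adapted to pairs. Let $(X,\Delta)$ be a K-semistable toric log Fano pair, so $\Delta=\sum_i (1-a_i)D_i$ is a combination of torus-invariant divisors with $0<a_i\le 1$. Write $L=-K_X-\Delta=\sum_i a_iD_i$.

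First, if $X$ is singular, I would pass to a toric resolution or a small modification. Alternatively I would work directly with the moment polytope $P_L=\{u:\langle u,v_i\rangle\ge -a_i\}$, where K-semistability is equivalent to the barycenter of $P_L$ being the origin. Then $(-K_X-\Delta)^n=n!\,\vol(P_L)$.

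Next I use \cite{CFH14}: after suitable toric modifications, there is a torus-invariant $\IP^{k}\subset X$ (or a fibration structure) with trivial normal bundle. Equivalently, there are primitive ray generators $v_0,\dots,v_k$ with $\sum v_j=0$ spanning a face-compatible sublattice, coming from a minimal rational curve with $-K_X\cdot C=k+1$. For a torus-invariant curve $C$ in this family, the polytope has width at most $\sum_j a_j\le k+1$ in the direction dual to $C$. Combining this with the barycenter condition via a Grünbaum/Fradelizi-type volume estimate (as in \cite{Fvol}) gives
$$\vol(P_L)\le \frac{(k+1)^{?}\cdots}{n!}.$$
Optimizing over $k$, the bound is at most $2n^n/n!$ unless $k=n$, and $k=n$ forces $X\cong\IP^n$ with fan $\{v_0,\dots,v_n\}$. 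Equality analysis in Fradelizi's inequality forces $P_L$ to be a product of a segment and an $(n-1)$-simplex with $a_i=1$. That gives $(\IP^1\times\IP^{n-1},0)$.

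\textbf{Large-coefficient case.} When the coefficients of $\Delta$ are large, or $X\cong\IP^n$ with $\Delta\neq 0$, the curve argument gives nothing. Here I invoke the logarithmic Moraga--S\"uss gap estimate of Appendix \ref{app-vol}. It should say: if $(X,\Delta)$ is a toric log Fano pair with $\Delta\ne0$ and K-semistable, then $(-K_X-\Delta)^n\le (1-c)(n+1)^n$ for an explicit $c$. Alternatively it bounds the volume by the volume of $\IP^n$ with one boundary coefficient, giving $(n+1-a)^n$-type bounds, and one needs these to be $<2n^n$. For $X=\IP^n$ the K-semistable pairs have $\Delta=(1-a)\sum D_i$ with all $a_i$ equal, forced by barycenter zero. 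The volume is then $(a(n+1))^n$, which is $\le 2n^n$ precisely when $a\le (2n^n)^{1/n}/(n+1)$. So I need to show that $X=\IP^n$ with $\Delta\ne0$ is excluded from exceeding the bound via the appendix estimate; I would check whether $a<1$ is forced small enough. Note that the statement only excludes $X\cong\IP^n$, so pairs on $\IP^n$ are excluded altogether and this point is moot.

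\textbf{Main obstacle.} The hard step is the uniform polytope estimate for pairs: controlling the width using $\sum a_j$ rather than $k+1$, and getting the strict inequality in the equality case with nonzero boundary. I would first try to reduce to the smooth-toric-variety statement of \cite{LM25} by showing that shrinking $a_i$ to $1$ only increases the volume while preserving the barycenter condition up to translation.
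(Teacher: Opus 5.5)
Your proposal has two genuine gaps, plus an unsupported equality analysis.

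\textbf{The central inequality is missing.} The display ``$\vol(P_L)\le (k+1)^{?}\cdots/n!$'' is a placeholder, and widths in the directions $v_0,\dots,v_k$ together with the barycenter condition do not by themselves give it. Those widths only control the projection of $P_L$ onto the character lattice of the fiber $Z\cong\IP^k$, that is, $\vol((-K_X-\D)|_Z)\le(k+1)^k$. This bound is valid because $Z$ passes through the identity of the torus, so $Z\not\subseteq\Supp\D$. The fibers of this projection in the remaining $r=n-k$ directions need a separate argument. The paper supplies it by blowing up $Z$, which has trivial normal bundle but is not torus-invariant, and proving $\vol(\pi^*L-xE)\ge L^n-\binom{n}{r}(L|_Z)^{n-r}x^r$. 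Combined with $A_{(X,\D)}(E)\le r$ and $\delta\ge 1$, this gives $L^n\le\binom{n}{r}(r+1)^r(n-r+1)^{n-r}$, which is at most $2n^n$ by \cite[Lemma 3.4]{LM25}.

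This argument treats every boundary uniformly, since the boundary only helps. So your ``large-coefficient case'' is unnecessary, and Appendix \ref{app-vol} does not say what you guess. Your fallback of enlarging $P_L$ to $P_{-K_X}$ also fails. The barycenter condition refers to the fixed position of the polytope in $M_\IR$, not to a translate. Moreover, $X$ can be K-unstable while $(X,\D)$ is K-semistable, so the K-semistable-$X$ result of \cite{LM25} does not apply.

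\textbf{Singular $X$ is not handled.} This is the more serious gap. \cite{CFH14} is a statement about smooth complete toric varieties, and a toric resolution $Y\to X$ does not rescue the argument. The polytope $P_L$ is unchanged, but the exceptional rays carry $a_j=A_{(X,\D)}(E_j)$, which can exceed $1$. Then the bound $\sum_j a_j\le k+1$ breaks down, and equivalently so does $\vol(L|_Z)\le(-K_Z)^k$. The paper uses a different tool here. At a torus-fixed non-regular point $x$, Liu's local-to-global comparison gives $(-K_X-\D)^n\le(\frac{n+1}{n})^n\hvol(x;X,\D)$. The logarithmic Moraga--S\"uss estimate $\hvol(x;X,\D)\le 2(n-1)^n$, which is the actual content of Appendix \ref{app-vol}, then yields the strict bound $<2n^n$.

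\textbf{The equality analysis is unsupported.} Your claim that ``equality in Fradelizi's inequality forces a product'' has no justification. In the paper, equality forces $r\in\{1,n-1\}$ and $\vol(\pi^*L-xE)=(\pi^*L-xE)^n$ for $x\in[0,r+1]$, hence $\epsilon(-K_X-\D;Z)\ge r+1$. One then needs the log Seshadri characterization (Proposition \ref{prop:seshadri-for-log-fano}, which rests on the optimal bend-and-break of \cite{JLR26}) to conclude $X\cong\IP^1\times\IP^{n-1}$. After that, $\D\neq 0$ visibly lowers the volume.
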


\subsection*{Acknowledgments}
C. Li is partially supported by NSF (Grant No. DMS-2305296).
M.Miao is partially supported by NSFC grant 125B2003. K.Zhang is partially supported by Scientific Research Innovation Capability Support Project for Young Faculty SRICSPYF-ZY2025169, and also by NSFC grants 12571060, 12271038 and 12271040.
M.Miao would like to express his sincere gratitude to his advisor, Gang Tian, for his constant encouragement and support. This project was initiated when two of the authors were attending Zhuhai's Workshop on Geometric Analysis in May 2026. They would like to thank the organizers for their warm hospitality and for providing an inspiring environment. We thank R. Andreasson, R. Berman, X. Wang, J. Zhou and J. Zhu for many helpful discussions. 

\subsection*{Declaration on the Use of AI}
All mathematical ideas, arguments, and results presented in this manuscript
were developed by the authors. After completion of the initial draft, AI
tools were used only to improve the language and presentation and to assist in
identifying possible typographical, expository, and logical issues. The authors take
full responsibility for the content of the manuscript.

\section{Preliminaries}
\label{sec:pre}

\subsection{The $\beta$-volume of twisted Fano pairs}
Let $X$ be a smooth projective manifold.  
Let $\eta$ be a closed positive $(1, 1)$-current. It is well known that locally $\eta=\sddb \psi$ for a plurisubharmonic function $\psi$. To incorporate both cases of smooth K\"{a}hler metrics with positive Ricci curvature and conical K\"{a}hler-Einstein metrics, we will work in the general twisted setting as in \cite{BBJ}. 
\begin{defi}\label{defi:twisted}
    We say that a closed positive (1,1)-current $\eta$ is a klt current if 
    $e^{-\psi}$ is locally integrable, and if furthermore $2\pi c_1(X)-[\eta]\in H^{1,1}(X, \IR)$ is a positive class, we say that $(X, \eta)$ is a twisted Fano pair. 
\end{defi}
\begin{ex}\label{ex-sm}
If $\eta\ge 0$ is smooth, $(X, \eta)$ is a twisted Fano pair if and only if $2\pi c_1(X)-[\eta]$ is a positive class. In this case, $c_1(X)$ is also a positive class and hence $X$ is a Fano manifold. When $\eta=0$, we will simply write $X$ for $(X, 0)$. 
\end{ex}
\begin{ex}\label{ex-con}
Let $\Delta=\sum_i a_i \Delta_i$ be a $\IQ$-divisor with $a_i\in (0,1)$. Set $\eta_\Delta=2\pi\sum_i a_i\{\Delta_i\}$ where $\{\Delta_i\}$ is the distribution of integration along $\Delta_i$. 
    Then $\eta_\Delta$ is a klt current if and only if $(X, \Delta)$ has klt singularities, if and only if
    $\prod_i |f_i|^{-2a_i}$ is locally integrable where $f_i$ is a local defining function of $\Delta_i$, i.e. $\Delta_i=\{f_i=0\}$. 
    We say that $(X, \Delta)$ is a log Fano pair if $(X, \Delta)$ has klt singularities and $-(K_X+\Delta)$ is an ample $\IQ$-Cartier divisor. In other words, $(X, \Delta)$ is a log Fano pair if and only if $(X, \eta_\Delta)$ is a twisted Fano pair. 
\end{ex}

If $(X, \eta)$ is a twisted Fano pair, then $-K_X=-(K_X+\frac{1}{2\pi}[\eta])+\frac{1}{2\pi}[\eta]$ admits a K\"{a}hler current with trivial multiplier ideal. 
By the Nadel vanishing theorem, we have:
\begin{equation*}
    H^i(X, \CO_X)=0, \quad \text{for } i>0. 
\end{equation*}

This implies that the K\"ahler cone of $X$ coincides with its ample cone. Therefore, any K\"ahler class $\frac{1}{2\pi}[\omega]\in H^{1,1}(X,\mathbb R)$ can be identified with the numerical class of an ample $\mathbb R$-line bundle $L$ in the Néron--Severi space $N^1(X)_{\mathbb R}$. 
Recall that an $\IR$-line bundle (resp. $\IQ$-line bundle) is an element of $\text{Pic}(X)\otimes_{\IZ}\IR$ (resp. $\text{Pic}(X)\otimes_{\IZ}\IQ$) where $\text{Pic}(X)$ is the Picard group of $X$.

In this setting, the volume of a K\"{a}hler manifold $(X,\omega)$, with $\omega\in 2\pi c_1(L)$, is simply given by
$$
\vol(X,\omega)=(2\pi)^n\cdot L^n=(2\pi)^n\cdot \vol(L).
$$
Here $\vol(L)$ denotes the algebro-geometric volume of $L$. When $L$ is a $\mathbb Q$-line bundle, it is defined as
\begin{eqnarray}\label{def:volume}
\vol(L):=\mathop{\limsup}_{m\to\infty}\frac{h^0(X,mL)}{m^n/n!}, 
\end{eqnarray}
which can be further extended to $\mathbb R$-line bundles by continuity. For more details, we refer to  \cite[Section 2.2.C]{Lbook}.

Fix an ample $\IR$-line bundle $L$ and choose a smooth K\"{a}hler form $\omega_0\in 2\pi c_1(L)$.
Denote by $\mathrm{PSH}_{\infty}(\omega_0)$ the set of bounded $\omega_0$-plurisubharmonic functions on $X$. For $\omega_\vphi=\omega_0+\sddb \vphi$ with $\vphi\in \mathrm{PSH}_\infty(\omega_0)$, its Monge-Amp\`{e}re measure $\omega_\vphi^n$ is well defined. Fix a smooth volume form $\Omega$ on $X$. If 
there exists $f\in L^1(X, \Omega)$ such that $\omega_\vphi^n=e^{-f}\Omega$, then we say that $\Ric(\omega_{\varphi}):= -\sqrt{-1}\partial\bar\partial \,\log(\omega_{\varphi}^n)=\sqrt{-1}\partial\bar\partial f+\ric(\Omega)$ is well defined. In this case, for any klt current $\eta$, define $\Ric_\eta(\omega_\vphi)=\Ric(\omega_\vphi)-\eta$. 
Consider the following \emph{greatest twisted Ricci lower bound}:
\begin{equation}\label{eq-beeta}
    \beta_\eta(X, L)=\sup\{t>0; \exists\, \omega\in 2\pi c_1(L) \,\text{such that} \,\Ric_\eta(\omega)\ge t \omega  \}. 
\end{equation}
Here $\Ric_\eta(\omega)\ge t\omega$ means that $\Ric_\eta(\omega)$ is well defined for $\omega=\omega_\vphi$ for some $\vphi\in \mathrm{PSH}_\infty(\omega_0)$ and $\Ric_\eta(\omega)-t\omega$ is a smooth semi-positive (1,1)-form. 
When $\eta=0$, we simply denote it as $\beta(X,L)$.

This invariant originates from Tian \cite{T92}. It only depends on the numerical class $c_1(L)$ and has been studied intensively in the literature; see e.g. \cite{R08,Sz11,Li11,SW16} in the case where $\eta=0$ and $L=-K_X$.
Thanks to the recent developments in the field of Yau--Tian--Donaldson conjecture,
we can now characterize the $\beta_\eta$ quantity in terms
of algebraic invariants. 

To explain this characterization, we first define the nef threshold of $(X, \eta)$ with respect to $L$ as:
\begin{equation*}
    s_\eta(X, L)=\sup\{t>0; c_1(X, \eta)-tc_1(L) \text{ is a nef class} \}, 
\end{equation*}
where $c_1(X,\eta):=c_1(X)-\frac{1}{2\pi}[\eta]$.

For any prime divisor $E\subset Y\xrightarrow{\pi}X$ on a smooth birational model $Y$ over $X$, define its twisted log discrepancy as
$$
A_\eta(E):=1+\ord_E(K_Y-\pi^*K_X)-\frac{1}{2\pi}\nu(\eta,E)=A_X(E)-\frac{1}{2\pi}\nu(\eta,E)
$$
where $\nu(\eta,E)$ denotes the Lelong number of the current $\pi^*\eta$ along $E$. For instance, when $\eta$ is from Example \ref{ex-con}, one simply has $\frac{1}{2\pi}\nu(\eta_\Delta,E)=\ord_E(\Delta)$; when $\eta$ is from Example \ref{ex-sm}, one has $\nu(\eta,E)=0$, so that $A_\eta(E)=A_X(E)$ is the usual log discrepancy in birational geometry. In any case, we always have that
$$
A_\eta(E)\leq A_X(E).
$$
Define the twisted $\delta$-invariant:
\begin{equation*}
    \delta_\eta(X, L)=\mathop{\inf}_E \frac{A_\eta(E)}{S(L;E)}, 
\end{equation*}
where $E$ runs through all prime divisors on all models over $X$. 
The definition when $L$ is $\IQ$-line bundle and $\eta=0$ is due to \cite{FO18,BJ20}; see \cite{Zhang21} for the extension to $\IR$-line bundles. Here $E\subset Y\xrightarrow{\pi}X$ runs through all prime divisors on smooth birational models $Y$ over $X$ and 
$$
S(L;E):=\frac{1}{\vol(L)}\int_0^\infty\vol(\pi^*L-xE)\, \dif x
$$
is the expected vanishing order of $L$ along $E$. 

\begin{thm}[\cite{BBJ,CRZ,Zhang21}]
\label{thm:bbj}

For any ample $\mathbb R$-line bundle $L$
\begin{equation}
\label{eq:beta=min-s-delta}
    \beta_\eta(X, L)=\min\{s_\eta(X, L), \delta_\eta(X, L)\}. 
\end{equation}
\end{thm}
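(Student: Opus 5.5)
We prove the two inequalities $\beta_\eta\le\min\{s_\eta,\delta_\eta\}$ and $\beta_\eta\ge\min\{s_\eta,\delta_\eta\}$ separately, reducing throughout to the anticanonical-type case by twisting.

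\textbf{Upper bound via $s_\eta$.} Suppose $\omega\in2\pi c_1(L)$ satisfies $\Ric_\eta(\omega)\ge t\omega$. Then $\Ric_\eta(\omega)-t\omega$ is a semipositive smooth form. Its cohomology class is $2\pi\bigl(c_1(X,\eta)-tc_1(L)\bigr)$, so this class is nef and $t\le s_\eta(X,L)$.

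\textbf{Upper bound via $\delta_\eta$.} Set $\theta:=\Ric_\eta(\omega)-t\omega\ge0$, a smooth form in $2\pi(c_1(X,\eta)-tc_1(L))$. Then $\omega$ solves the twisted Kähler--Einstein equation $\Ric(\omega)=t\omega+\eta+\theta$, and $\eta+\theta$ is a klt current. We apply the valuative criterion of \cite{BBJ} in its twisted form, which says that existence of a twisted KE metric forces twisted K-semistability. With the normalization $L_t=tL$, this reads $A_{\eta+\theta}(E)\ge S(tL;E)=tS(L;E)$ for every $E$. Smooth $\theta$ has zero Lelong numbers, so $A_{\eta+\theta}=A_\eta$, giving $t\le\delta_\eta(X,L)$. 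For irrational $L$ we use the $\mathbb R$-extension in \cite{Zhang21}; the key point there is that $S(\cdot;E)$ is continuous in $L$, so one can approximate $L$ by $\mathbb Q$-classes.

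\textbf{Lower bound.} Fix $t<\min\{s_\eta,\delta_\eta\}$; we must construct $\omega$ with $\Ric_\eta(\omega)\ge t\omega$. Since $t<s_\eta$, we first pick $t<t'<\min\{s_\eta,\delta_\eta\}$. The class $c_1(X,\eta)-tc_1(L)$ is then a convex combination of the nef class $c_1(X,\eta)-t'c_1(L)$ and the ample class $c_1(L)$, so it is ample. Choose a Kähler form $\theta$ in $2\pi(c_1(X,\eta)-tc_1(L))$ and solve the twisted equation $\Ric(\omega)=t\omega+\eta+\theta$. By the variational approach of \cite{BBJ}, extended to $\mathbb R$-classes and arbitrary klt twists in \cite{CRZ,Zhang21}, a solution exists when the twisted delta invariant of $(X,\eta+\theta)$ with respect to $tL$ exceeds $1$. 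That invariant is $\delta_\eta(X,L)/t>1$, again because $\theta$ is smooth and carries no Lelong numbers. The solution has bounded potential, and $\Ric_\eta(\omega)-t\omega=\theta\ge0$ is smooth, as the definition requires. Letting $t\uparrow\min\{s_\eta,\delta_\eta\}$ gives the lower bound.

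\textbf{Main obstacle.} The hardest step is the existence theorem with a general klt current twist and an irrational class $L$. The BBJ properness argument, coercivity of the twisted Ding functional from $\delta>1$, must be run with $\mathbb R$-coefficients. I would reduce it to the $\mathbb Q$ case by perturbing $L$ and $\theta$ within the open ample cone. The perturbation is controlled by the continuity of $\delta$ and of the nef threshold, and the openness of the strict inequality $\delta_\eta(X,L)/t>1$ makes the approximation possible.
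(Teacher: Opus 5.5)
Your proposal is correct and follows the route the paper indicates. You rewrite $\Ric_\eta(\omega)\ge t\omega$ as the twisted K\"ahler--Einstein equation $\Ric_\eta(\omega)=t\omega+\theta$ with $\theta\ge 0$ smooth, so that $[\theta]$ is nef and $A_{\eta+\theta}=A_\eta$, and then invoke the uniform twisted Yau--Tian--Donaldson theorem of \cite{BBJ,CRZ,Zhang21} in both directions. The paper gives nothing beyond this reduction and citation, so it likewise defers to those references the analytic issues you isolate (irrational classes, general klt current twists).
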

This statement follows from the resolution of uniform Yau--Tian--Donaldson conjecture for twisted K\"{a}hler-Einstein metrics. 
We say that $(X, \eta)$ admits a twisted K\"{a}hler-Einstein metric if $c_1(X,\eta)=c_1(X)-\frac{1}{2\pi}[\eta]$ is a K\"{a}hler class and there exists a K\"ahler metric $\omega\in 2\pi c_1(X,\eta)$ such that
\begin{equation}
    \Ric_\eta(\omega):=\Ric(\omega)-\eta=\omega. 
\end{equation}

More generally, given an ample $\mathbb R$-line bundle $L$ and any $s<s_\eta(X, L)$, one can choose a smooth semi-positive $(1,1)$-form $\theta\in 2\pi (c_1(X, \eta)-s c_1(L))$. Consider the following equation of twisted K\"{a}hler-Einstein metrics:
\begin{equation}\label{eq-KEs}
    \Ric_\eta(\omega)=s \omega+\theta,\ \omega\in 2\pi c_1(L). 
\end{equation}
Thanks to uniform Yau-Tian-Donaldson theorem, we can characterize the solvability of \eqref{eq-KEs} in terms of algebraic thresholds. Conversely, suppose that we have some $\omega\in 2\pi c_1(L)$ satisfying $\Ric_\eta(\omega)\geq s\omega$, then by our definition $\theta:=\Ric_\eta(\omega)-s\omega$ is a smooth semi-positive $(1,1)$-form and $\omega$ solves \eqref{eq-KEs}.

To prove the main results of this paper, we will only need one part (which is the easier direction) of Theorem \ref{thm:bbj}:

\begin{prop}
    Suppose that there exists $\omega\in 2\pi c_1(L)$ such that $\Ric_\eta(\omega)\geq s\omega$, then $c_1(X, \eta)-s c_1(L)$ is nef and $\delta_\eta(X, L)\geq s$. Namely, $\beta_{\eta}(X,L)\leq \min \{s_{\eta}(X,L), \delta_{\eta}(X,L)\}$.
\end{prop}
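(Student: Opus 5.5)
The proposition has two parts, and I would prove them separately. Fix $\omega=\omega_\vphi\in 2\pi c_1(L)$ with $\vphi\in\mathrm{PSH}_\infty(\omega_0)$ and $\Ric_\eta(\omega)\ge s\omega$, so that $\theta:=\Ric_\eta(\omega)-s\omega$ is a smooth semi-positive $(1,1)$-form.

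\textbf{Nefness.} This is cohomological. Since $\omega_\vphi^n=e^{-f}\Omega$ with $f\in L^1$, we have $\Ric(\omega)=\sddb f+\Ric(\Omega)$, whose class is $2\pi c_1(X)$. Hence $[\theta]=2\pi c_1(X)-[\eta]-s[\omega]=2\pi(c_1(X,\eta)-sc_1(L))$. A class containing a smooth semi-positive form is nef, because adding $\epsilon\omega_0$ gives a K\"ahler form for every $\epsilon>0$. Thus $c_1(X,\eta)-sc_1(L)$ is nef, i.e. $s\le s_\eta(X,L)$.

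\textbf{The bound $\delta_\eta(X,L)\ge s$.} The strategy is to reduce to the standard fact that a twisted K\"ahler--Einstein-type equation forces $\delta\ge1$ (Berman's argument, \cite{BBJ}). Rewrite the equation as
\[
\Ric(\omega)=s\omega+\theta+\eta .
\]
Locally, with $\omega=\sddb u$ for a bounded psh $u$, $\eta=\sddb\psi$, $\theta=\sddb h$ ($h$ smooth psh), this says that $\omega^n=e^{-su-\psi-h+\text{pluriharmonic}}$. Globally,
\[
\omega_\vphi^n=C e^{-s\vphi-\Psi}\,\Omega',
\]
where $\Psi$ is a global quasi-psh potential of $\eta$ and the smooth part is absorbed into $\Omega'$.

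The next step is to lower bound $A_\eta(E)$ in terms of $S$. For a prime divisor $E$ over $X$, I would use the valuative characterization: by \cite{BJ20} and \cite{Zhang21} for $\IR$-classes, $\delta_\eta(X,L)\ge s$ holds iff the twisted Ding functional $D_{\eta,s}$ is bounded below, or equivalently iff
\[
\frac{A_\eta(E)}{S(L;E)}\ge s \quad \text{for all } E.
\]
The cleanest route is the Ding energy. Because $\omega_\vphi$ solves the twisted equation with the semi-positive twist $\theta$, it is a critical point, indeed a minimizer, of the convex functional
\[
D(\phi)=-E(\phi)-\frac1s\log\int e^{-s\phi-\Psi}\Omega'.
\]
Convexity follows from Berndtsson's convexity along geodesics, which applies since $\eta\ge0$ and $\theta\ge0$. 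So $D$ is bounded below.

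Finally, test $D$ along the geodesic ray associated with $E$, namely the ray built from the filtration by $\ord_E$. Its slope at infinity is computed by the non-Archimedean Ding functional, giving
\[
\lim_{t\to\infty}\frac{D(\phi_t)}{t}=A_\eta(E)/s-S(L;E)\ge0 .
\]
This yields $A_\eta(E)\ge sS(L;E)$ for all $E$, hence $\delta_\eta\ge s$. Taking the supremum over admissible $s$ gives $\beta_\eta\le\min\{s_\eta,\delta_\eta\}$.

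\textbf{Main obstacle.} The slope formula for the $\log\int$ term is the hardest step. It requires the Lelong number of $\Psi$ along $E$, which is why $A_\eta$ appears, and it requires working with $\IR$-line bundles. For these I would approximate $L$ by $\IQ$-classes and use the continuity of $S$ and $\vol$.
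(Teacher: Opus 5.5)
Your proposal is correct and follows the same route as the paper. The paper notes that $\theta=\Ric_\eta(\omega)-s\omega$ is a smooth semi-positive form in $2\pi(c_1(X,\eta)-sc_1(L))$, which gives nefness and shows that $\omega$ solves \eqref{eq-KEs}, and then it invokes the easy direction of the twisted Yau--Tian--Donaldson theorem \cite{BBJ,Zhang21}; that easy direction is exactly the Ding-functional convexity and slope argument you unpack.

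Two small precisions. First, along the ray from $\ord_E$ you only get, and only need, the inequality $\lim_{t\to\infty}D(\phi_t)/t\le A_\eta(E)/s-S(L;E)$, which comes from a lower bound of $\int e^{-s\phi_t-\Psi}\Omega'$ near $E$, not an equality. Second, for $\IR$-classes the $\IQ$-approximation should be applied to the test rays, since a ray for a $\IQ$-class $L_i$ with $L-L_i$ ample is still $\omega_0$-psh; the hypothesis itself does not pass to $L_i$.
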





Now for any ample $\mathbb R$-line bundle $L$ over a twisted Fano pair, we define its \emph{$\beta$-volume} to be $$\beta_\eta(X, L)^n\cdot\vol(L).$$ It is straightforward to verify that this quantity is scaling invariant. To prove Theorem \ref{thm:gap}, one essentially needs to get an upper bound for the $\beta$-volume. In fact, when $\eta=0$, the following estimate  was previously shown in \cite{Zhang22}, building on ideas from \cite{Fvol}.

\begin{thm}[\cite{Zhang22}]
    For any ample $\mathbb R$-line bundle $L$, one has $\beta(X,L)^n\cdot\vol(L)\leq (n+1)^n$, and the equality holds only when $X\cong\mathbb P^n$.
\end{thm}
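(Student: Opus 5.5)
Here is how I would prove the theorem of \cite{Zhang22} stated above. By Theorem~\ref{thm:bbj}, or rather the easy direction in the Proposition, $\beta(X,L)\le\delta(X,L)$. So it suffices to bound $\delta(X,L)^n\cdot\vol(L)$. Since both sides scale correctly, I would normalize so that $\beta:=\beta(X,L)$ is fixed. I would also assume $\beta>0$, otherwise there is nothing to prove.

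The key step is Fujita's argument \cite{Fvol} adapted to an arbitrary ample $L$. Pick a general point $p\in X$ and let $E$ be the exceptional divisor of the blow-up $\pi:Y=\bl_pX\to X$. Then $A_X(E)=n$, so $\delta(X,L)\le n/S(L;E)$. To bound $S(L;E)$ from below, I would use the elementary volume estimate
\[
\vol(\pi^*L-xE)\ \ge\ \vol(L)-x^n ,
\]
which comes from counting the conditions imposed by vanishing to order $x$ at $p$. It holds for rational $L$ by counting sections and extends to $\IR$-classes by continuity. Integrating the positive part of the right-hand side gives
\[
S(L;E)\ \ge\ \frac{1}{\vol(L)}\int_0^{\vol(L)^{1/n}}(\vol(L)-x^n)\,\dif x=\frac{n}{n+1}\vol(L)^{1/n}.
\]
Hence $\beta\le\delta\le (n+1)\vol(L)^{-1/n}$, that is, $\beta^n\vol(L)\le(n+1)^n$.

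For the equality case, equality forces $\vol(\pi^*L-xE)=\vol(L)-x^n$ for all $x\in[0,\vol(L)^{1/n}]$. In other words, the Seshadri-type behaviour at a general point is extremal. If $L$ were rational, I would argue as Fujita/Liu--Zhuang do: equality makes $\pi^*L-xE$ nef up to $x=\vol(L)^{1/n}$, so the Seshadri constant satisfies $\epsilon(L;p)^n=L^n$ at a general point, and one concludes $X\cong\IP^n$. The irrational case is the obstacle. My plan here is to use Okounkov bodies \cite{KL17}. Fix an infinitesimal flag at $p$. Equality of the volume function with $\vol(L)-x^n$ means the Okounkov body $\Delta(L)$ contains, and by volume coincides with, the scaled standard simplex $\vol(L)^{1/n}\Sigma_n$. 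Positivity results in \cite{KL17} then say that $L$ is "infinitesimally" as positive as $\mathcal{O}(1)$ on $\IP^n$ at general points.

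From that positivity I would deduce the existence of a rational curve through a general point with $L\cdot C$ small. Alternatively, I would combine it with $\beta\le s(X,L)$, so that $-K_X-\beta L$ is nef. Pairing with a minimal rational curve $C$ through $p$ (Mori) gives $-K_X\cdot C\ge\beta L\cdot C$. The simplex shape forces $L\cdot C\ge\vol(L)^{1/n}$, since a curve through a general point must meet $\pi^*L-xE$ nonnegatively for $x$ up to $\vol(L)^{1/n}$. It then follows that $-K_X\cdot C\ge(n+1)$, and Cho--Miyaoka--Shepherd-Barron/Kebekus gives $X\cong\IP^n$.

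I expect the delicate point to be the justification of nefness of $\pi^*L-xE$ up to $x=\vol(L)^{1/n}$ for irrational $L$, which is exactly where \cite{KL17} replaces the section-counting argument.
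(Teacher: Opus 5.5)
Your proposal is correct in outline and follows the route of \cite{Zhang22}, which this paper cites and whose Okounkov-body argument it adapts in Proposition~\ref{prop:e=vol}. Fujita's point blow-up bound ($A_X(E)=n$, $\vol(\pi^*L-xE)\ge\vol(L)-x^n$) together with $\beta\le\delta$ gives the inequality, and equality forces $\vol(\pi^*L-xE)=\vol(L)-x^n$ on $[0,\vol(L)^{1/n}]$.

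However, the step you flag as delicate is only named, not carried out, and the body you describe is the wrong simplex. Take the infinitesimal flag: $Y_1=E\cong\IP^{n-1}$, followed by linear subspaces of $E$ ending at a point $y\in E$. Since $(\pi^*L-tE)|_E\equiv\CO_{\IP^{n-1}}(t)$, the slice $\{x_1=t\}$ of $\Delta(L)$ lies in $t$ times the standard $(n-1)$-simplex. Then \eqref{eq:vol-of-trucated-body} combined with the volume identity forces these slices to be full. So $\Delta(L)$ is the \emph{inverted} simplex $\{0\le x_2+\cdots+x_n\le x_1\le\vol(L)^{1/n}\}$; a standard simplex would not give $\vol_{\IR^n}(\Delta\cap\{x_1\le x\})=x^n/n!$.

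Nefness then follows from Theorem~\ref{thm:KL} exactly as at the end of the proof of Proposition~\ref{prop:e=vol}:
\begin{enumerate}
\item The segment $[0,\vol(L)^{1/n}]\times\{0\}$ lies in $\Delta(L)$.
\item By \eqref{eq:body-trans}, the origin lies in $\Delta(\pi^*L-xE)$ for $x<\vol(L)^{1/n}$, so $y\notin\mathbf B_-(\pi^*L-xE)$.
\item Since $y\in E$ is arbitrary, $E\cap\mathbf B_-(\pi^*L-xE)=\emptyset$.
\item A curve of negative degree would lie in $\mathbf B_-$ and meet $E$, so $\pi^*L-xE$ is nef.
\end{enumerate}
Without this, your claim that a curve through $p$ ``must meet $\pi^*L-xE$ nonnegatively'' is assumed rather than proved.

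Two smaller points. Drop your first alternative: positivity at $p$ gives curves with $L\cdot C$ \emph{large}, not small. And in your rational-case paragraph, $\epsilon(L;p)^n=L^n$ only yields $\IP^n$ once you feed in the nefness of $-K_X-\beta L$, which you bring in only later.

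Your ending genuinely differs from the usual one. You use $L\cdot C\ge\vol(L)^{1/n}\,\mathrm{mult}_pC$ and the nefness of $-K_X-\beta L$ to get $-K_X\cdot C\ge\beta\vol(L)^{1/n}=n+1$ for every rational curve through a general point. You then conclude by \cite{CMSB02}, equivalently Theorem~\ref{thm-lX}(1). As far as I recall, Fujita and \cite{Zhang22} instead read the nefness as $\epsilon(L;p)=\vol(L)^{1/n}$, deduce $\epsilon(-K_X;p)\ge\beta\,\epsilon(L;p)=n+1$, and apply the Seshadri-constant characterization of $\IP^n$ (\cite{BS09,LZ18}). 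Both endings are valid. Yours trades that Seshadri characterization for the CMSB theorem and fits the minimal-rational-curve framework used elsewhere in this paper. The Seshadri route stays within the basepoint-free-theorem type argument used in Appendix~B.
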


In \cite[Theorem A.1]{Zhang22}, Y. Liu further showed the existence of a dimensional constant $\varepsilon(n)>0$ so that $\beta(X,L)^n\cdot\vol(L)\leq (1-\varepsilon(n))(n+1)^n$ whenever $X\not\cong\mathbb P^n$. One of the main goals of this paper is to determine the sharp value of $\varepsilon(n)$, and also extend the discussion to the more general twisted setting with $\eta\ge 0$.
To this end, we will rely on two important tools in algebraic geometry, \emph{minimal rational curves} and \emph{Okounkov bodies}, which we now turn to describe.

\subsection{Minimal rational curves and weighted blowups}\label{sec-wtbl}
A smooth projective manifold $X$ is called \textit{uniruled} if there exists a dominant rational map: $Y\times \IP^1\dashrightarrow X$ where $Y$ is an $(n-1)$-dimensional projective variety. A fundamental result in birational algebraic geometry states that $X$ is uniruled if and only if $K_X$ is not pseudoeffective (\cite{MM86,BDPP13}). Basic examples of uniruled manifolds are Fano manifolds. More generally, if $(X, \eta)$ is a twisted Fano pair, then $X$ is uniruled. Indeed, in this case $-K_X=c_1(X, \eta)+\frac{1}{2\pi}[\eta]$ is big and hence $K_X$ cannot be pseudoeffective (otherwise the zero divisor would be big). 

The uniruled property can be characterized by the existence of free rational curves. 
Recall that a \textit{rational curve} on $X$ is a non-constant holomorphic morphism $f: \IP^1\rightarrow X$. A rational curve $f$ is called \textit{free} if 
\begin{equation*}
    f^*T_X=\oplus_{i=1}^n \CO_{\IP^1}(a_i)
\end{equation*}
with $a_i\ge 0$, $i=1,\dots, n$. It is well known that $X$ is uniruled if and only if there exists a free rational curve on $X$. 
By applying Mori's technique of bend-and-break to free rational curves, we can obtain \textit{minimal rational curves} which are represented by morphisms $f: \IP^1\rightarrow X$ such that
\begin{equation}\label{eq-mrat}
    f^*T_X=\CO_{\IP^1}(2)\oplus \CO_{\IP^1}(1)^{\oplus (d-2)}\oplus \CO_{\IP^1}^{\oplus (n+1-d)},
\end{equation}
where $d=-K_X\cdot f(\IP^1)$ (See \cite[Theorem IV.2.10]{K96}). In fact, a uniruled manifold is always covered by minimal rational curves. 
Any minimal rational curve satisfying \eqref{eq-mrat} is always an immersed curve and its degree satisfies $$d=\deg(f)=-K_X\cdot f(\IP^1)=-K_X\cdot C\in \{2,3,\cdots,n+1\}.$$ Its normal bundle is defined as 
\begin{equation}\label{eq-nsplit}
N_{f/X}=f^*T_X/T\IP^1=\CO_{\IP^1}(1)^{\oplus (d-2)}\oplus \CO_{\IP^1}^{\oplus (n+1-d)}.
\end{equation}

By the definition of nef threshold $s_\eta(X, L)$, we have $(c_1(X)-\frac{1}{2\pi}[\eta]-s_\eta(X, L)\cdot L)\cdot f(\IP^1)\geq 0$. So if we denote $$p\coloneqq L\cdot f(\IP^1),$$ then 
\begin{eqnarray}\label{eq:dsp}
    s_\eta(X, L)\cdot p\le (-K_X-\frac{1}{2\pi}[\eta])\cdot C\le -K_X\cdot C=d.
\end{eqnarray}
The second inequality holds because the curve $C$ is free and hence movable. 

In \cite{LM25}, for any minimal rational curve of degree $d$, two divisorial valuations $v_\ell$, $\ell=1, 2$, are canonically defined as a $(1^{\oplus(d-2)},\ell^{\oplus (n+1-d)})$-weighted blowup that is adapted to the splitting \eqref{eq-nsplit}. More precisely, choose any point $t\in \IP^1$ and a smooth local branch $C_1$ passing through $f(t)=x$ (if $C$ is smooth at $f(t)$ then $C_1=C$ locally), we can choose local coordinates $\mathbf{z}=\{z_1,\dots, z_n\}=(\mathbf{z}', \mathbf{z}'', z_n)$ with $\mathbf{z}'=\{z_1,\dots, z_{d-2}\}$, $\mathbf{z}''=\{z_{d-1},\dots, z_{n-1}\}$ that satisfy:
\begin{enumerate}
    \item $z_i(f(t))=0$ for $i=1,\dots, n$;
    \item Locally $C_1$ is defined by $\{z_1=\cdots=z_{n-1}=0\}$;
    \item For any $1\le i\le d-2$, $f^*\dif z_i$ is a local generator of the $i$-th summand of $\CO(-1)^{\oplus (d-2)}\hookrightarrow N^\vee_{f/X} $;
    \item For any $d-1\le j\le n-1$, $f^*\dif z_j$ is the $j$-th summand of $\CO^{\oplus (n-d+1)}\hookrightarrow N^\vee_{f/X}$. 
\end{enumerate}
We will call such coordinates adapted to the splitting \eqref{eq-nsplit}.
For any holomorphic function $h$ on $X$ that is holomorphic in a neighborhood of $x\in X$, we expand
$h=h(z)$ locally as:
\begin{equation*}
    h(z)=\sum_{I,J}b_{IJ}(z_n)\mathbf{z}'^I\mathbf{z}''^J
\end{equation*}
where $\mathbf{z}'^I=z_1^{i_1}\cdots z_{d-2}^{i_{d-2}}$
(resp. $\mathbf{z}''^J=z_{d-1}^{i_{d-1}}\cdots z_{n-1}^{i_{n-1}}$), 
for $I=\{i_1,\dots, i_{d-2}\}\in \IN_{\ge 0}^{d-2}$ with $|I|=i_1+\cdots+i_{d-2}$ (resp. $J=\{j_{d-1},\dots, j_{n-1}\}\in \IN_{\ge 0}^{n-d+1}$ with $|J|=j_{d-1}+\cdots j_{n-1}$). 
For each $\ell\in \{1,2\}$, define:
\begin{equation}
v_\ell(h)=\min\left\{|I|+\ell|J|; b_{IJ}(z_n)\not\equiv 0\right\}.
\end{equation}
One can show that $v_\ell$ is well-defined and does not depend on the choice of $t\in \IP^1$, the local branch $C_1$ passing through $f(t)$ and local adapted coordinates (see \cite[Section 4]{LM25} for a complete treatment). 
Moreover $v_\ell=\ord_E$ is a divisorial valuation where $E=E_\ell$ is a prime divisor on a birational model $\pi\colon \hat{X}\rightarrow X$ of $X$. 
The center of $v$ over $X$ (image of $E$ on $X$) is $C=f(\IP^1)$. As in \cite{LM25}, we will use $v_1$ for $d=2$ and $v_2$ for $d\ge 3$ in the following estimates. 
\begin{prop}\label{prop:phi}
    With the above notation, for any $\IR$-line bundle with $s_\eta(X, L)\ge 1$, we have:
    \begin{equation}\label{eq-volest}
    \vol(\pi^*L-x E)\geq\vol(L)-\phi_d(x),\ x\geq 0
    \end{equation}
    where $\phi_d(x)$ has the following expression in each of the three cases:
    \begin{enumerate}
    \item  When $d=2$ and $C$ is an embedded minimal rational curve:
    $\phi_d(x)=n d x^{n-1}=2 n x^{n-1}$.
    \item
    When $d=2$ and $C$ is an immersed minimal rational curve with a singular image,
         \begin{eqnarray*}
    \phi_d(x)=
\begin{cases}
    ndx^{n-1}-x^n & \text{if} \,\, x\leq d,\\
   ndx^{n-1}-x^n+(x-d)^n  & \text{if} \,\, x> d. 
\end{cases}
\end{eqnarray*}
\item When $3\leq d\leq n+1$ and $C$ is a minimal rational curve, 
\begin{eqnarray*}
    \phi_d(x)=
    \begin{cases}
        \frac{1}{2^{n-d+1}}\left(ndx^{n-1}-(d-2)x^n \right) & \text{if}\,\,\, x\leq d, \\
        \frac{n!}{2^{n-d+1} (n-d+1)!(d-3)!}\int_0^d \beta^{d-3}(d-\beta)(x-\beta)^{n-d+1}\, \dif \beta & \text{if} \,\,\, x>d.
    \end{cases}
\end{eqnarray*}
    \end{enumerate}
\end{prop}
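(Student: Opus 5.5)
We estimate the volume from below by the standard lower bound $\vol(\pi^*L-xE)\ge \vol(L)-\big(\text{dimension of the space of conditions}\big)$. Concretely, for $m$ large with $mL$ integral (after approximating the $\IR$-line bundle $L$ by $\IQ$-line bundles and using continuity of $\vol$), we have
\[
h^0(\hat X, m(\pi^*L-xE))\ \ge\ h^0(X,mL)-\dim H^0\big(X, \CO_X(mL)\otimes \CO_X/\mathfrak a_{mx}\big),
\]
where $\mathfrak a_k=\{h: v_\ell(h)\ge k\}$ is the valuation ideal. The ideals $\mathfrak a_k$ are cosupported on $C$, so the quotient $\CO_X/\mathfrak a_{mx}$ is filtered by graded pieces $\mathfrak a_k/\mathfrak a_{k+1}$. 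Each piece is a locally free sheaf on the normalization of $C$, that is on $\IP^1$ after pulling back by $f$.

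Using the adapted coordinates, $\mathfrak a_k/\mathfrak a_{k+1}$ is identified with $\bigoplus_{|I|+\ell|J|=k}$ of monomials $\mathbf z'^I\mathbf z''^J$. As a bundle on $\IP^1$ this is the weighted symmetric power
\[
\bigoplus_{a+\ell b=k}\sym^a\big(\CO(-1)^{\oplus(d-2)}\big)\otimes \sym^b\big(\CO^{\oplus(n+1-d)}\big),
\]
which follows from the splitting \eqref{eq-nsplit} of $N^\vee_{f/X}$. Twisting by $f^*(mL)$, of degree $mp$, each summand $\CO(mp-a)$ contributes $h^0 = \max(mp-a+1,0)$ sections. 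We bound the quotient dimension by the sum of these $h^0$'s, which is legitimate by the left exactness of sections along the filtration.

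We next use $p\le d/s_\eta(X,L)\le d$, which follows from \eqref{eq:dsp} together with $s_\eta\ge1$. Replacing $p$ by $d$ only enlarges the bound, so we may take $p=d$. Summing over $a,b$ with $a+\ell b< mx$ and passing to the limit $m^{-n}n!\,(\cdot)$ turns the sum into an integral:
\[
n!\int_{\{\alpha+\ell\beta\le x,\ \alpha\le d\}}(d-\alpha)\,\frac{\alpha^{d-3}}{(d-3)!}\,\frac{\beta^{n-d}}{(n-d)!}\,\dif\alpha\,\dif\beta .
\]
Here $\alpha^{d-3}/(d-3)!$ and $\beta^{n-d}/(n-d)!$ are the asymptotic densities of monomials in $d-2$ and $n+1-d$ variables. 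Evaluating this integral in each case should reproduce $\phi_d$:

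1. For $\ell=2$ and $d\ge3$, substituting $\beta\mapsto\beta/2$ produces the factor $2^{-(n-d+1)}$. Integrating $\beta$ first gives $(x-\alpha)^{n-d+1}/(n-d+1)!$, which is the stated formula for $x>d$. For $x\le d$ the constraint $\alpha\le d$ is inactive, and a Beta-integral computation gives $ndx^{n-1}-(d-2)x^n$ up to the normalization.

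2. For $d=2$ and $\ell=1$ there are no $\mathbf z'$ variables. The weight is then just $|J|$, the quotient is $\sym^{<mx}$ of a trivial rank-$(n-1)$ bundle, and the count gives $2nx^{n-1}$ in the embedded case (1).

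The main obstacle is the immersed case (2), where $C$ has singular image and the identification $\mathfrak a_k/\mathfrak a_{k+1}\cong$ (bundle on $\IP^1$) fails at the nodes, since there several local branches meet at one point. The plan is to work with $f^{-1}\mathfrak a$ on $\IP^1$ and compare sections on $C$ with sections on its normalization. Pulling back to the normalization overcounts the conditions, because at a node several branches impose conditions that are not independent. The correction terms $-x^n$ and $+(x-d)^n$ should arise from exactly this: a monomial count at a singular point of $C$, where the conditions coming from two branches overlap in an ideal of colength about $(mx)^n/n!$, truncated by $(x-d)^n$ once $x>d$. I would first try to realize this as an inclusion-exclusion with a point blowup at the singular point. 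Alternatively I would rely on \cite[Section 4]{LM25}, where this local analysis is carried out, and check that only $p\le d$ is used there. That check is what allows the $\IQ$-line bundle statement of \cite{LM25} to transfer, by continuity, to $\IR$-line bundles with $s_\eta(X,L)\ge1$.
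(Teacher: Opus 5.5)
\textbf{Cases (1) and (3).} Your treatment here is essentially the paper's argument. You filter $\CO_X/\mathcal I_{xk}$ by the graded pieces $\mathcal I_y/\mathcal I_{y+1}$ and send their twisted sections to sections on $\IP^1$ of $\CO(kp)$ tensored with the weighted symmetric power of $\CO(-1)^{\oplus(d-2)}\oplus\CO^{\oplus(n+1-d)}$. You then replace $kp$ by $kd$ and pass to the Riemann-sum limit. Your integral does reproduce $\phi_d$ in both subcases. Two small corrections:
\begin{itemize}
\item When $C$ is singular, which can also happen in case (3), the graded piece is not a bundle on $\IP^1$. What you actually use is that it is torsion free on $C$: $v_\ell(gh)=v_\ell(h)$ for $g\notin I_C$. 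Hence its sections inject into sections on the normalization.
\item Approximate $L$ by $\IQ$-bundles $L_i$ with $L-L_i$ nef, so that $s_\eta(X,L_i)\ge 1$, and hence $L_i\cdot C\le d$, persist.
\end{itemize}

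\textbf{Case (2) has a genuine gap.} You propose to get the correction by subtracting a local count of gluing conditions at a singular point of $C$. Subtracting local conditions bounds $h^0$ from above only if those conditions are linearly independent on the finite-dimensional space $H^0(\IP^1,\CO(kp)\otimes\mathrm{Sym}^y\CO^{\oplus(n-1)})$. You give no argument for this, and in fact it fails once $y$ is large compared with $k$. A naive subtraction would give $2nx^{n-1}-x^n$ for every $x$, which is negative for $x>2n$. The $(x-2)^n$ term is exactly the correction for this non-independence, so a local colength count, or an inclusion--exclusion via a point blowup, does not as stated yield the bound.

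\textbf{The missing idea.} The paper states it explicitly, and it is global. The lift $\tilde s$ must satisfy a compatibility relation at the two preimages $t\neq t'$ of a singular point. This forces $\tilde s$ into the \emph{subsheaf} $\CO(kp)\otimes\mathrm{Sym}^y(\CO(-1)\oplus\CO^{\oplus(n-2)})$.

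For instance, take a double point with transverse branches, and choose coordinates in which the branch through $f(t)$ is the $z_n$-axis and the other branch is the $z_1$-axis. Membership in the $y$-th power of the ideal of the second branch forces the coefficient of $z_1^{y-i}\mathbf w^I$ to vanish to order $y-i$ at $t$, where $|I|=i$ and $\mathbf w=(z_2,\dots,z_{n-1})$. So the conormal direction $dz_1$ gets twisted by $\CO(-t)$. Since $N^\vee_{f/X}$ is trivial, this elementary modification is $\CO(-1)\oplus\CO^{\oplus(n-2)}$.

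With this inclusion there is no independence issue, because you are computing $h^0$ of a smaller bundle. Your own integral, with one $\CO(-1)$ direction and $n-2$ trivial ones, then gives
\[
n!\int_{\{\alpha+\beta\le x,\ \alpha\le 2\}}(2-\alpha)\,\frac{\beta^{n-3}}{(n-3)!}\,\dif\alpha\,\dif\beta=
\begin{cases}
2nx^{n-1}-x^n, & x\le 2,\\
2nx^{n-1}-x^n+(x-2)^n, & x>2.
\end{cases}
\]
So the terms $-x^n$ and $(x-2)^n$ come from $h^0(\IP^1,\CO(2k-i))$ decreasing linearly in $i$ and vanishing for $i>2k$. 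They do not come from a colength at the node.

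Falling back on \cite[Section 4.2]{LM25} is what the paper also does for the detailed count. Still, the mechanism you put forward for (2) should be replaced by this subsheaf inclusion, since it is the one non-formal input in the proposition.
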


\begin{proof}
We only outline the proof and refer to \cite{LM25} for detailed calculations. We first treat the case when $L$ is an ample $\mathbb Q$-line bundle, and then extend to $\mathbb R$-line bundles by continuity.

For any $y\in \IR_{\ge 0}$, we define the valuative ideal sheaf $\CI_{y}=\{f\in \CO_X; v_\ell(f)\ge y\}\subset \CO_X$. 
Given an ample $\mathbb Q$-line bundle $L$ and $x\in \IQ_{\geq 0}$, we choose $k\in \IN^*$ sufficiently large such that $xk\in \IZ_{\geq 0}$ and $kL$ is a line bundle, then we have the following long exact sequence:
\begin{eqnarray*}
    0\rightarrow H^0(X, kL\otimes \CI_{xk})\rightarrow H^0(X, kL)\rightarrow H^0({X}, kL\otimes \CO_X/\CI_{xk})\rightarrow \cdots,
\end{eqnarray*}
which implies
$$\vol(\pi^*L-xE)\ge L^n-\mathop{\limsup}_{k\rightarrow+\infty} \frac{h^0({X}, kL\otimes \CO_X/\CI_{xk})}{k^n/n!}.$$
From the exact sequence:
\begin{equation*}
    0\rightarrow \CI_y/\CI_{y+1}\rightarrow \CO_X/\CI_{y+1}\rightarrow \CO_X/\CI_y\rightarrow 0,
\end{equation*}
we inductively deduce the estimates:
\begin{eqnarray}\label{eq-tels2}
h^0(X, kL\otimes \CO_X/\CI_{xk})
&\le&\sum_{y=0}^{xk-1}h^0(X, kL\otimes \CI_{y}/\CI_{y+1}).
\end{eqnarray}
Thus,
\begin{eqnarray*}
    \vol(\pi^*L-xE)
    &\ge&L^n-\mathop{\limsup}_{k\rightarrow+\infty}
\sum_{y=0}^{xk-1}\frac{h^0(X, kL\otimes \CI_y/\CI_{y+1})}{k^n/n!}. 
\end{eqnarray*}
To estimate the second term on the right-hand side, as in \cite{LM25}, we consider three cases.

\textbf{(Case I):} When $d=2$ and $C$ is an embedded minimal rational curve, we consider the valuation $v=v_1$, which is the divisorial valuation induced by the exceptional divisor $E$ of the ordinary blowup $\pi\colon \bl_C X \rightarrow X$ along the rational curve $C$ with trivial normal bundle (see \eqref{eq-nsplit}). Denote by $\CI=\CI_C$ the ideal sheaf of $C$, then $\CI_{y}=\CI^y$ and we get the following estimate:
    \begin{eqnarray*}
        \vol(\pi^*L-xE) \geq L^n -\mathop{\limsup}_{k\rightarrow +\infty} \frac{n!}{k^n}\sum_{y=0}^{xk-1}h^0(\IP^1, kL|_C\otimes \CI^y/\CI^{y+1}).
    \end{eqnarray*}
    Moreover, $\CI^y/\CI^{y+1}\cong\sym^y(N_{C/X}^{\vee})=\sym^y(\CO_{C}^{\oplus(n-1)})\cong \CO_C^{\oplus \binom{n-2+y}{n-2}}$. So we get:
    \begin{eqnarray*}
       &&\mathop{\limsup}_{k\rightarrow +\infty} \frac{n!}{k^n}\sum_{y=0}^{xk-1}h^0(\IP^1, kL|_C\otimes \CI^y/\CI^{y+1})=
        \mathop{\limsup}_{k\rightarrow +\infty} \frac{n!}{k^n}\sum_{y=0}^{xk-1}\binom{n-2+y}{n-2}(kp+1)\\
        &=& n(n-1)\cdot\mathop{\limsup}_{k\rightarrow +\infty}\sum_{y=0}^{xk-1} \left(\frac{y+1}{k}\right)\cdots \left(\frac{y+n-2}{k}\right)\left(p+\frac{1}{k}\right) \frac{1}{k}\\
        &=& n(n-1)p\int_0^x \alpha^{n-2}\, \dif \alpha=npx^{n-1}\le n d x^{n-1}=:\phi_d(x).
    \end{eqnarray*}
    For the last inequality we used the assumption $s_\eta(X, L)\ge 1$ so that $p\le d$ from (\ref{eq:dsp}).

\textbf{(Case II):}  When $d=2$ and $C$ is an immersed minimal rational curve with a non-empty subset of singular points, then the normalization map $f: \IP^1\rightarrow f(\IP^1)$ satisfies $f(t)=f(t')$ for some $t\neq t'\in \IP^1$. Any section $[s]\in H^0(X, kL\otimes \CI_y/\CI_{y+1})$ still comes from a section $\tilde{s}$ in $H^0\left(\IP^1, \CO(pk)\otimes \mathrm{Sym}^y(\CO_C^{\oplus (n-1)})\right)$. 
    However, this section $\tilde{s}$ must satisfy non-trivial relation at $t$ and $t'$ which leads to the strong inclusion $\tilde{s}\in H^0(\IP^1, \CO(pk)\otimes \mathrm{Sym}^y(\CO(-1)\oplus \CO^{\oplus (n-2)}))$ (see more details in \cite[Section 4.2]{LM25}). With the assumption $s_\eta(X,L)\ge 1$, we know that $p\le d$ and hence 
    $$
    h^0(\IP^1, \CO(pk)\otimes \mathrm{Sym}^y(\CO(-1)\oplus \CO^{\oplus (n-2)}))\le 
    h^0(\IP^1, \CO(d k)\otimes \mathrm{Sym}^y(\CO(-1)\oplus \CO^{\oplus (n-2)})).
    $$ 
    So we get a stronger estimate:
    \begin{eqnarray*}
       \sum_{y=0}^{xk-1}h^0(X, kL\otimes \CI_y/\CI_{y+1})&\le&       
        \sum_{y=0}^{xk-1}\sum_{i=0}^{\min\{y,dk\}}(dk-i+1)\binom{n-3+y-i}{n-3}\\
       &=&\phi_d(x) \frac{k^n}{n!}+O(k^{n-1})    
    \end{eqnarray*}
where, by the same calculation as in \cite[Section 4.2]{LM25}, we have:
    \begin{eqnarray*}
    \phi_d(x)=
\begin{cases}
    2nx^{n-1}-x^n & \text{if} \,\, x\leq 2\\
   2nx^{n-1}-x^n+(x-2)^n  & \text{if} \,\, x> 2. 
\end{cases}
\end{eqnarray*}

\textbf{(Case III):} When $3\leq d\leq n+1$ and $C$ is a minimal rational curve. Assuming $s_\eta(X, L)\ge 1$, we again obtain $p=L\cdot f(\IP^1)\le d$. 
   By the method in \cite[Section 4.3]{LM25}, we get the estimate:
    \begin{eqnarray*}
       && \sum_{y=0}^{xk-1}h^0(X, kL\otimes \CI_y/\CI_{y+1})\\
       &\le& \sum_{y=0}^{xk-1}\sum_{m=0}^{\lfloor y/2\rfloor}
        h^0(\IP^1, \CO(kp)\otimes \mathrm{Sym}^{y+2m-2\lfloor y/2\rfloor}(\CO(-1)^{\oplus (d-2)})\otimes \mathrm{Sym}^{\lfloor y/2\rfloor-m}(\CO^{\oplus (n-d+1)}))\\
    &\le & \sum_{y=0}^{xk-1}\sum_{m=0}^{\lfloor\min\{y,kd\}/2\rfloor}\binom{d-3+2m+t_y}{d-3}\binom{n-d+\lfloor y/2 \rfloor -m}{n-d} h^0(\IP^1, \CO(kd-2m))
    \end{eqnarray*}
    where $t_y=1$ if $y$ is odd and $t_y=0$ if $y$ is even, and for the last inequality we used the inequality $h^0(\IP^1, \CO(kp-2m-t_y)\le h^0(\IP^1, \CO(kd-2m))$. 
    By the same calculation as in \cite{LM25}, we get the asymptotic expression of last expression: 
    $\phi_d(x)\frac{k^n}{n!}+O(k^{n-1})$ where
    \begin{eqnarray*}
    \phi_d(x)=
    \begin{cases}
        \frac{1}{2^{n-d+1}}\left(ndx^{n-1}-(d-2)x^n \right) & \text{if}\,\,\, x\leq d, \\
        \frac{n!}{2^{n-d+1}(n-d+1)!(d-3)!}\int_0^d \beta^{d-3}(d-\beta)(x-\beta)^{n-d+1}\, \dif \beta & \text{if} \,\,\, x>d.
    \end{cases}
\end{eqnarray*}
Hence, the desired estimate \eqref{eq-volest} holds when $L$ is a $\IQ$-line bundle. 

To see that the same estimate holds for an $\mathbb R$-line bundle $L$ satisfying $s_\eta(X,L)\geq 1$, one can approximate $L$ using a sequence of ample $\mathbb Q$-line bundles $L_i$ with $L-L_i$ nef. Then one also has $s_\eta(X,L_i)\geq 1$, and we conclude by the continuity of the volume function in $N^1(\hat X)_{\mathbb R}$ \cite[Section 2.2.C]{Lbook}.

\end{proof}

\subsection{Okounkov bodies}

As in \cite{Zhang22}, to better understand the volume function $\vol(\pi^*L-xE)$ appearing above, we will use the positivity theory of Okounkov bodies developed in \cite{KL17}.

Let $Y$ be a smooth projective manifold of dimension $n$. Let $L$ be a big line bundle on $Y$ (i.e., $\vol(L)>0$). An admissible flag
$$Y_\bullet: Y=Y_0\supset Y_1\supset...\supset Y_n=\{y\}$$
is a complete flag of irreducible subvarieties of $Y$ where each $Y_i$ is of codimension $i$ and is smooth at the point $Y_n$. Following \cite{LM09}, any flag $Y_\bullet$ induces a valuative map:
$$
\nu_{Y_\bullet}: H^0(Y,mL)\setminus \{0\}\to\mathbb N^n.
$$
More precisely, for any nonzero $s\in H^0(Y,mL)$, let $k_1:=\ord_{Y_1}(s)$. Dividing the $k_1$-th power of the defining section of $Y_1$, we get a section $s'\in H^0(Y,mL-k_1 Y_1)$ which does not vanish identically on $Y_1$. So by restriction we get $s_1\in H^0(Y_1,(mL-k_1 Y_1)|_{Y_1})$. Put $k_2:=\ord_{Y_2}(s_1)$. Continuing in this way, we obtain the map $\nu_{Y_\bullet}(s)=(k_1,...,k_n)\in\mathbb N^n$.

The Okounkov body of $L$ is then defined to be
$$
\Delta(L):=\Delta_{Y_\bullet}(L):=\mathrm{closure}\left(\bigcup_{m\geq 1}\frac{1}{m}\left\{\nu_{Y_\bullet}(s):s\in H^0(Y,mL)\setminus \{0\}\right\}\right).
$$
A crucial fact is that $\Delta(L)$ is a convex body in $\mathbb R^n$ and
$$
\vol_{\mathbb R^n}(\Delta(L))=\frac{\vol(L)}{n!}.
$$
Note that $\Delta(L)$ only depends on the numerical class $c_1(L)$, so by scaling $\Delta(L)$ makes sense for $\mathbb Q$-line bundles as well. One can furthermore extend by continuity so that $\Delta(L)$ is defined for big $\mathbb R$-line bundles \cite[Theorem B]{LM09}. As pointed out in \cite[\S 1.3]{KL17}, one can alternatively define $\Delta(L)$ for a big $\mathbb R$-line bundle $L$ using
$$
\Delta(L):=\mathrm{closure}\left\{\nu_{Y_\bullet}(D): D\text{ effective $\mathbb R$-divisor, numerically equivalent to } L\right\},
$$
where $\nu_{Y_\bullet}(D)$ is inductively defined in the same way as we did for $\nu_{Y_\bullet}(s)$.

Given $x>0$ such that $L-xY_1$ is a big $\mathbb R$-line bundle, \cite[Proposition 1.6]{KL17} shows that
\begin{equation}
    \label{eq:body-trans}
    \Delta(L-xY_1)=\Delta(L)\cap\{x_1\geq x\}-x \vec e_1.
\end{equation}
This implies that
\begin{equation}
    \label{eq:vol-of-trucated-body}
    \vol_{\mathbb R^n}(\Delta (L)\cap\{x_1\leq x\})=\frac{1}{n!}(\vol(L)-\vol(L-xY_1)).
\end{equation}

Finally, one can read off the positivity information of $L$ from $\Delta(L)$. Recall that the \emph{non-nef locus} of an $\mathbb R$-line bundle $L$ is defined to be
$$
\mathbf{B}_-(L):=\bigcup_{A}\mathbf B(L+A),
$$
where $A$ runs through ample $\mathbb Q$-line bundles. Then $L$ is nef if and only if $\mathbf{B}_-(L)=\emptyset.$

\begin{thm}(\cite[Theorem A]{KL17})\label{thm:KL}
    If $L$ is a big and nef $\mathbb R$-line bundle, then $\Delta_{Y_\bullet}(L)$ must contain the origin. Conversely, given a big  $\mathbb R$-line bundle $L$ and an admissible flag $Y_\bullet$ with $Y_n=\{y\}$, if $\Delta_{Y_\bullet}(L)$ contains the origin, then $y\notin \mathbf{B}_-(L)$. 
\end{thm}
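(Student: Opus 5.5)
The plan has two parts, one for each direction of Theorem \ref{thm:KL}. The forward direction reduces to the ample case plus a closedness argument. The converse goes through asymptotic orders of vanishing.

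\textbf{Forward direction.} First I treat an ample $\mathbb Q$-line bundle $L'$. Take $m$ with $mL'$ very ample, and choose $s\in H^0(Y,mL')$ with $s(y)\neq 0$. Then $s$ does not vanish identically on any $Y_i$, since each $Y_i$ contains $y$. So every successive restriction in the construction of $\nu_{Y_\bullet}$ is nonzero at $y$, which gives $\nu_{Y_\bullet}(s)=0$ and hence $0\in\Delta(L')$.

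For $L$ big and nef, fix an ample $A$. Then $L+\varepsilon A$ is ample for every $\varepsilon>0$, and by the previous paragraph (after rational approximation) $0\in\Delta(L+\varepsilon A)$. To pass to the limit I would use the global Okounkov cone of \cite{LM09}. This is a closed convex cone $\Delta(Y)\subset\mathbb R^n\times N^1(Y)_{\mathbb R}$ whose fibre over each big class $\xi$ is exactly $\Delta(\xi)$. The points $(0,L+\varepsilon A)$ lie in $\Delta(Y)$, and since the cone is closed, so does their limit $(0,L)$. As $L$ is big, this says $0\in\Delta(L)$.

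\textbf{Converse direction.} Assume $0\in\Delta(L)$ and fix an ample $\mathbb Q$-line bundle $A$. The steps are as follows.

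\emph{Step 1 (super-additivity).} For effective divisors $D\equiv L$ and $D'\equiv A$ we have $\nu(D+D')=\nu(D)+\nu(D')$. Hence $\Delta(L)+\Delta(A)\subset\Delta(L+A)$.

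\emph{Step 2 (a simplex at the origin).} I claim $\Delta(A)$ contains a small simplex $\epsilon\Sigma$, where $\Sigma$ is the standard simplex in the positive orthant. The idea is to pick sections of a multiple of $A$ that cut out the flag transversally near $y$, so that each vector $\epsilon e_i$ is realized. Combined with Step 1, $\Delta(L+A)$ contains $\epsilon\Sigma$. In particular there are effective $\mathbb R$-divisors $D\equiv L+A$ with $|\nu_{Y_\bullet}(D)|$ arbitrarily small.

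\emph{Step 3 (multiplicity bound).} I will show that
$$\mathrm{mult}_y D\le \nu_1(D)+\cdots+\nu_n(D).$$
The proof is by induction along the flag. Write $D=k_1Y_1+D_1$. Then $\mathrm{mult}_yD\le k_1+\mathrm{mult}_y D_1\le k_1+\mathrm{mult}_y(D_1|_{Y_1})$, because restricting to the smooth-at-$y$ subvariety $Y_1$ does not decrease multiplicity. Iterating down the flag gives the bound. Consequently the asymptotic order of vanishing satisfies $\mathrm{ord}_y\|L+A\|=0$ for every ample $A$.

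\emph{Step 4 (conclusion).} By the characterization of \cite{ELMNP}, $y\in\mathbf B_-(L)$ if and only if $\mathrm{ord}_y\|L\|>0$. Moreover $\mathrm{ord}_y\|L\|=\lim_{A\to 0}\mathrm{ord}_y\|L+A\|$. By Step 3 this limit is $0$, so $y\notin\mathbf B_-(L)$.

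\textbf{Main obstacle.} I expect the hardest step to be Step 2, the existence of a uniform simplex inside $\Delta(A)$. The flag subvarieties $Y_i$ may be singular away from $y$, and one must produce sections whose valuation vectors are exactly $\epsilon e_i$. My first attempt would be to restrict very ample sections successively to $Y_{i-1}$, choosing them to vanish to order exactly one along $Y_i$ near $y$ and not at all along the earlier flag members. I would then use the surjectivity of restriction maps for sufficiently positive multiples of $A$ (Serre vanishing) to lift these sections back to $Y$.
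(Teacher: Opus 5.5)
The paper does not prove Theorem~\ref{thm:KL}; it quotes it from \cite[Theorem A]{KL17}. There is therefore no internal argument to compare against, and your outline has to stand on its own. It does: the proof is correct.

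\textbf{Forward direction.} A section of a very ample multiple that does not vanish at $y$ has $\nu_{Y_\bullet}=0$. Closedness of the global Okounkov cone of \cite{LM09} then carries $0\in\Delta(L+\varepsilon A)$ over to the big and nef limit $L$.

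\textbf{Converse direction.} Step~3 is cleanest in local coordinates $z_1,\dots,z_n$ at $y$ with $Y_i=\{z_1=\cdots=z_i=0\}$ near $y$. There $\nu_{Y_\bullet}(f)$ is the lexicographically smallest exponent in the expansion of $f$, and $\mathrm{mult}_y f$ is the smallest total degree. So $\mathrm{mult}_y f\le|\nu_{Y_\bullet}(f)|_1$ is immediate. Combined with the characterization $y\in\mathbf B_-(L)\iff\mathrm{ord}_y\|L\|>0$ on the smooth variety $Y$ (Ein--Lazarsfeld--Musta\c{t}\u{a}--Nakamaye--Popa), this completes the proof.

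One point should be made explicit. You bound $\mathrm{ord}_y\|L+A\|$ by $\mathrm{mult}_y D$ for effective $D$ that are only \emph{numerically} equivalent to $L+A$. This needs the standard fact that on the big cone the asymptotic order of vanishing depends only on the numerical class (e.g.\ via Nakayama's $\sigma$-invariants).

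\textbf{Step~2 is unnecessary.} You flag it as the main obstacle, but you don't need it.
\begin{itemize}
\item Your forward-direction argument already gives $0\in\Delta(A)$ for ample $A$.
\item Step~1 then gives $0\in\Delta(L)+\Delta(A)\subset\Delta(L+A)$.
\item Since $\Delta(L+A)$ is by definition a closure of valuation vectors, there are effective $D\equiv L+A$ with $|\nu_{Y_\bullet}(D)|$ arbitrarily small.
\item You can even drop $A$: apply Step~3 directly to divisors $D\equiv L$ supplied by $0\in\Delta(L)$, which gives $\mathrm{ord}_y\|L\|=0$.
\end{itemize}

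If you want the simplex anyway, there is a simpler route than lifting from $Y_{i-1}$. For $m\gg0$ the sheaf $\mathcal O_Y(mA)\otimes\mathcal I_{Y_i}$ is globally generated. Since $\mathcal I_{Y_i,y}=(z_1,\dots,z_i)$, some global section has local expression $z_i+g$ with $g\in\mathfrak m_y\mathcal I_{Y_i,y}$, and its valuation vector is exactly $e_i$. This works directly on $Y$ and avoids any issue with $Y_i$ failing to be Cartier in $Y_{i-1}$ away from $y$.
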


\section{Minimal rational curve with trivial normal bundle}

Throughout this section, we assume that $(X, \eta)$ is twisted Fano pair, 
and $L$ is an ample $\mathbb R$-line bundle and $C\subset X$ is a minimal rational curve. In this part we treat the case where
$$d=-K_X\cdot C=2,$$
so that $C$ has trivial normal bundle by (\ref{eq-nsplit}). The main result of this part is the following.
\begin{thm}
\label{thm:d=2}
    Assume that $C\subset X$ is a minimal rational curve with trivial normal bundle. Then we have $\beta_\eta(X,L)^n\cdot L^n\leq 2n^n$. Moreover if $\eta=2\pi\{\Delta\}+\eta_1$ with $\Delta$ an effective $\IQ$-divisor and $\eta_1\ge 0$ a closed positive (1,1)-current with a nef class $[\eta_1]$, then the equality $\beta_\eta(X,L)^n\cdot L^n= 2n^n$ holds only when $X\cong \mathbb P^1\times\mathbb P^{n-1}$ and $\eta=0$. 
\end{thm}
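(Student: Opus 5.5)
By scale invariance of the $\beta$-volume we first normalize $\beta_\eta(X,L)=1$. By the easy direction of Theorem \ref{thm:bbj} (the Proposition following it) this gives $s_\eta(X,L)\ge 1$ and $\delta_\eta(X,L)\ge 1$, i.e. $A_\eta(E)\ge S(L;E)$ for every divisor $E$ over $X$. We apply this to the divisor $E$ defining the valuation $v_1$ attached to $C$ in \S\ref{sec-wtbl}. In the embedded case this is the exceptional divisor of $\bl_C X$; in the immersed case it is the analogous divisor for the monomial valuation with weight $1$ on $z_1,\dots,z_{n-1}$. In both cases $A_X(E)=n-1$, hence $S(L;E)\le A_\eta(E)\le A_X(E)=n-1$. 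Write $V=L^n$.

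Since $s_\eta\ge 1$, Proposition \ref{prop:phi} applies. In both subcases one checks that $\phi_2(x)\le 2nx^{n-1}$, and in Case II the inequality is strict for $x>0$ because $-x^n+(x-2)^n<0$. Set $x_0=(V/2n)^{1/(n-1)}$. Then
\[
S(L;E)\ \ge\ \frac1V\int_0^{x_0}\bigl(V-2nx^{n-1}\bigr)\,\dif x=\frac{n-1}{n}\,x_0 .
\]
Combining this with $S\le n-1$ gives $x_0\le n$, that is, $V\le 2n^n$. This proves the inequality.

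Now suppose equality holds. Every inequality in the chain must then be an equality, and we read off the following.
\begin{enumerate}
\item $C$ must be embedded, by the strictness in Case II.
\item $A_\eta(E)=A_X(E)$, so the Lelong number of $\pi^*\eta$ along $E$ vanishes.
\item $p=L\cdot C=2$, so by \eqref{eq:dsp} $(-K_X-L)\cdot C=0$ and $[\eta]\cdot C=0$.
\item $\vol(\pi^*L-xE)=V-2nx^{n-1}$ for $0\le x\le n$, and $\vol(\pi^*L-xE)=0$ for $x\ge n$.
\end{enumerate}
Since $C$ can be taken through a general point (minimal rational curves cover $X$), item 2 forces $\Delta=0$ and $\eta_1$ to have generic Lelong number zero along $C$. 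Item 3 gives $[\eta_1]\cdot C=0$ for a covering family; this should force $[\eta_1]=0$ once $X$ is identified, because on $\IP^1\times\IP^{n-1}$ a nonzero nef class must be positive on one of the two rulings. The rulings are exactly the degree-$2$ minimal curves there.

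The main obstacle is the rigidity statement $X\cong\IP^1\times\IP^{n-1}$, which has to work even when $L$ is an irrational class. Following \cite[\S4]{Zhang22}, we take an admissible flag on $\hat X=\bl_C X$ with $Y_1=E\cong C\times\IP^{n-2}$ and study the Okounkov body $\Delta(\pi^*L)$. The exact volume formula in item 4, together with \eqref{eq:vol-of-trucated-body}, says that the slice of $\Delta(\pi^*L)$ at $x_1=x$ has the maximal possible volume for every $x\in[0,n]$. Those slices are contained in the Okounkov bodies of $(\pi^*L-xE)|_E=\CO(2)\boxtimes\CO(x)$ on $\IP^1\times\IP^{n-2}$, so they must coincide with these full bodies. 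In particular, after the translation \eqref{eq:body-trans}, the body $\Delta(\pi^*L-xE)$ contains the origin for every flag point $y\in E$ and every $x<n$. By Theorem \ref{thm:KL}, $\mathbf B_-(\pi^*L-xE)$ then avoids $E$. I would push this to show that $\pi^*L-nE$ is nef with $(\pi^*L-nE)^n=0$ and restricts to $\CO(2)\boxtimes\CO(n)$ on $E$. Its $n-1$ directions along the fibres of $E\to C$ should then yield a morphism $X\to\IP^{n-1}$ whose fibres are the deformations of $C$, namely a smooth $\IP^1$-bundle.

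Combining this with $-K_X-L$ nef and $(-K_X-L)\cdot C=0$, I would identify the other extremal contraction to $\IP^1$ and conclude $X\cong\IP^1\times\IP^{n-1}$ and $L=-K_X/n$ numerically. I expect the passage from ``the Okounkov body is extremal'' to an actual product structure to be the hardest step, and would try to mimic closely the argument in \cite{Zhang22} characterizing $\IP^n$.
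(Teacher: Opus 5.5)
Your proof of the inequality is correct and is the paper's argument; normalizing $\beta_\eta=1$ instead of $s_\eta=1$ is harmless. In the equality case, your Okounkov-slice reasoning is a plausible variant of Proposition \ref{prop:e=vol} and gives $\epsilon(L;C)\ge n$, which is exactly what the paper extracts.

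\textbf{The rigidity step is missing.} You have no mechanism turning this Seshadri bound into $X\cong\IP^1\times\IP^{n-1}$. For an arbitrary $\IR$-class $L$, the nef class $\pi^*L-nE$ is not adjoint and need not be semiample, so there is no morphism to work with. The morphism you guess is also misidentified. On the model, $\pi^*(-K_X)-nE$ contracts $\hat X=\IP^1\times\bl_q\IP^{n-1}\to\IP^1\times\IP^{n-2}\cong E$. Its fibres are strict transforms of lines through $q$, which meet $E$ once transversally; they are not deformations of $C$.

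The paper supplies two ideas you lack:
\begin{enumerate}
\item Since $c_1(X,\eta)-c_1(L)$ and $[\eta_1]$ are nef, one gets $\epsilon(-K_X-\Delta;C)\ge n=\mathrm{codim}\,C+1$. This is where the hypothesis on $\eta$ enters.
\item A log Liu--Zhuang characterization (Proposition \ref{prop:seshadri-for-log-fano}) finishes the job:
\begin{itemize}
\item $D=\pi^*(-K_X-\Delta)-\epsilon E$ is semiample by the basepoint-free theorem, because $D-(K_{\hat X}+\Delta')$ is ample.
\item Kawamata--Viehweg shows the contraction $\Phi$ embeds $E$. Hence every contracted curve $C'$ has $C'\cdot E=1$ and $-(K_{\hat X}+\Delta')\cdot C'=\epsilon-a\ge 2$.
\item The optimal bend-and-break bound of \cite{JLR26} then excludes a birational $\Phi$, so $\hat X$ is a $\IP^1$-bundle over $E$ with section $E$.
\item The extension splits because $H^1(\CO_C)=0$.
\end{itemize}
\end{enumerate}
Remark \ref{rmk-logSe} shows that the threshold $r+1$ and the sharp bend-and-break bound are essential in the log setting. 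So ``mimicking \cite{Zhang22}'', which invoked \cite{LZ18} at a point, does not suffice.

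\textbf{Your treatment of $\eta$ is also wrong.} Item 2 carries no information. For a general member $C$ of a covering family, $\mathrm{mult}_C\Delta=0$ automatically, and so is the generic Lelong number of $\eta_1$ along a very general $C$. So $\Delta=0$ does not follow, and neither does the nefness of $-K_X-L$ that you later invoke.

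Item 3 does not force $[\eta]=0$ either. For $n\ge 3$, only the fibres $\IP^1\times\{\mathrm{pt}\}$ of $\IP^1\times\IP^{n-1}$ have anticanonical degree $2$; lines in the $\IP^{n-1}$ factor have degree $n$. So $[\eta]\cdot C=0$ still allows, for example, $\Delta=\IP^1\times H$.

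The correct argument is by volume:
\[
2n^n=\beta_\eta^nL^n\le c_1(X,\eta)^n\le(-K_X)^n=2n^n .
\]
The last inequality is strict when $[\eta]\neq 0$, since $t\mapsto(-K_X-t[\eta]/2\pi)^n$ has negative derivative along a segment of ample classes. Finally, with your normalization $L^n=2n^n$ forces $L\equiv -K_X$, not $-K_X/n$.
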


In what follows we will first derive the inequality $\beta_\eta(X,L)^n\cdot L^n\leq 2n^n$ and then characterize the equality case. 

\subsection{Embedded minimal rational curve with trivial normal bundle}

\begin{prop}\label{prop:trivialNormal}
 Let $(X,\eta)$ be a twisted Fano pair containing an embedded minimal rational curve with trivial normal bundle. Then for any ample $\mathbb R$-line bundle $L$, we have  
 $$
 \beta_\eta(X,L)^n\cdot L^n\leq 2n^n.
 $$
\end{prop}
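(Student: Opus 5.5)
By the scaling invariance of the $\beta$-volume, I will normalize $L$ so that $\beta:=\beta_\eta(X,L)=1$; the goal then becomes $L^n\le 2n^n$. By the easy direction of Theorem \ref{thm:bbj} (the Proposition following it), $\beta=1$ gives both $s_\eta(X,L)\ge 1$ and $\delta_\eta(X,L)\ge 1$. Strictly speaking $\beta$ is a supremum, so I will take $t<\beta$ with an actual metric satisfying $\Ric_\eta(\omega)\ge t\omega$, rescale $L$ so that $t=1$, and let $t\to\beta$ at the end.

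Let $E$ be the exceptional divisor of the ordinary blowup $\pi\colon \bl_C X\to X$ along the embedded curve $C$, which has trivial normal bundle. This is the valuation $v_1$ of Case I in Proposition \ref{prop:phi}. Since $C$ has codimension $n-1$ and $X$ is smooth, $A_X(E)=n-1$, and hence $A_\eta(E)\le A_X(E)=n-1$. The condition $\delta_\eta\ge1$ therefore gives
\[
S(L;E)\le A_\eta(E)\le n-1.
\]
Case I of Proposition \ref{prop:phi}, which applies because $s_\eta(X,L)\ge 1$, gives
\[
\vol(\pi^*L-xE)\ge \max\{L^n-2nx^{n-1},\,0\}.
\]

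Set $V=L^n$ and $x_0=(V/2n)^{1/(n-1)}$. Then
\[
S(L;E)\ge \frac1V\int_0^{x_0}\bigl(V-2nx^{n-1}\bigr)\,\dif x=\frac1V\Bigl(Vx_0-2x_0^{\,n}\Bigr)=x_0\Bigl(1-\frac1n\Bigr).
\]
Combining this with $S\le n-1$ gives $x_0\,\frac{n-1}{n}\le n-1$, so $x_0\le n$. Hence $V=2n\,x_0^{\,n-1}\le 2n\cdot n^{n-1}=2n^n$, which is exactly the claim.

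The remaining issues are mild. First, the passage from $t<\beta$ to $\beta$ is a continuity argument, which is handled by the scaling described at the start. Second, one needs that the volume function is nonnegative, which is immediate. I expect the main obstacle to be justifying the inequality $A_\eta(E)\le n-1$ together with $\delta_\eta\ge\beta$ in the twisted $\mathbb R$-line bundle setting. Both, however, have already been recorded in the preliminaries, so the proof reduces to the integral computation above.
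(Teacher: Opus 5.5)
Your proof is correct and follows the paper's argument. You blow up $C$, use Case I of Proposition \ref{prop:phi} to get $S(L;E)\ge \frac{n-1}{n}\left(L^n/2n\right)^{1/(n-1)}$, and compare this with $A_\eta(E)\le A_X(E)=n-1$; the only difference is cosmetic, in that you normalize $\beta_\eta(X,L)=1$ (so $s_\eta\ge 1$ and $\delta_\eta\ge 1$ follow from $\beta_\eta\le\min\{s_\eta,\delta_\eta\}$) while the paper normalizes $s_\eta(X,L)=1$ and finishes with $\beta_\eta^n\cdot L^n\le \delta_\eta^{n-1}\cdot L^n$.
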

\begin{proof}
    Let $\pi\colon \hat{X}=\bl_C X \rightarrow X$ be the blowup of $X$ along the curve $C$, with the exceptional divisor $E=\pi^{-1}(C)$. 
    Then $A_X(E)=n-1$. We can rescale $L$ such that $s_{\eta}(X,L)=1$. By Proposition \ref{prop:phi}, we have $\vol(\pi^*L-xE)\geq L^n-2nx^{n-1}$, then 
    \begin{eqnarray*}
        S(L;E)\geq\frac{1}{L^n}\int_0^{\sqrt[n-1]{L^n/2n}} \left(L^n-2nx^{n-1}\right)\, \dif x=\frac{n-1}{n}\left(\frac{L^n}{2n}\right)^{\frac{1}{n-1}}.
    \end{eqnarray*}
    Therefore,
    \begin{eqnarray*}
        \delta_{\eta}(X,L)\leq  \frac{A_\eta(E)}{S(L;E)}\leq  \frac{A_X(E)}{S(L;E)}\leq \frac{n-1}{\frac{n-1}{n}\left(\frac{L^n}{2n}\right)^{\frac{1}{n-1}}}=n \left(\frac{2n}{L^n}\right)^{\frac{1}{n-1}}.
    \end{eqnarray*}
    This implies
    \begin{eqnarray*}
        \delta_{\eta}(X,L)^{n-1}\cdot L^n\leq 2n^{n}.
    \end{eqnarray*}
    Recall that $\beta_{\eta}(X,L)=\min\{s_{\eta}(X,L), \delta_{\eta}(X,L)\}=\min\{1,\delta_{\eta}(X,L)\}$, we conclude that
    \begin{eqnarray*}
        \beta_{\eta}(X,L)^n\cdot L^n \leq \delta_{\eta}(X,L)^{n-1}\cdot L^n\leq 2n^n.
    \end{eqnarray*}
\end{proof}

\subsection{Singular minimal rational curves with trivial normal bundles}
\begin{prop}\label{prop:immersed}
    Suppose $(X, \eta)$ is an $n$-dimensional twisted Fano pair. Assume $f\colon \IP^1 \rightarrow X$ is an immersed minimal rational curve with a non-empty subset of singular points such that $f^*T_{X}=\CO(2)\oplus \CO^{\oplus(n-1)}$, then for any ample $\IR$-line bundle $L$ over $X$, we have $\beta_\eta(X,L)^n\cdot L^n<2n^n$.
\end{prop}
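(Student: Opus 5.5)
We follow the proof of Proposition \ref{prop:trivialNormal}, now using the sharper bound of Proposition \ref{prop:phi}(2). Since the $\beta$-volume is scaling invariant, we first rescale $L$ so that $s_\eta(X,L)=1$. Let $v_1=\ord_E$ be the divisorial valuation attached to $f$ in \S\ref{sec-wtbl}. Its center is $C=f(\IP^1)$, and locally along each smooth branch it is the ordinary blowup of a curve, so $A_X(E)=n-1$. Since $A_\eta(E)\le A_X(E)$, we have $\delta_\eta(X,L)\le (n-1)/S(L;E)$. This reduces everything to a strict lower bound for $S(L;E)$.

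By Proposition \ref{prop:phi}(2) with $d=2$, $\vol(\pi^*L-xE)\ge \max\{0,\,L^n-\phi_2(x)\}$. Here $\phi_2(x)=2nx^{n-1}-x^n$ for $x\le 2$ and $\phi_2(x)=2nx^{n-1}-x^n+(x-2)^n$ for $x>2$. Two elementary facts are needed.

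\begin{itemize}
\item $\phi_2(x)\le 2nx^{n-1}$ for all $x\ge0$, with strict inequality for $x>0$. For $x\le2$ the difference is $x^n>0$. For $x>2$ the difference is $x^n-(x-2)^n>0$.
\item $\phi_2$ is increasing. For $x\le 2$ its derivative is $nx^{n-2}(2(n-1)-x)>0$ because $n\ge 3$; for $x>2$ one checks similarly that the derivative stays positive.
\end{itemize}

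Set $x_0=(L^n/2n)^{1/(n-1)}$. Then
\[
S(L;E)\ \ge\ \frac{1}{L^n}\int_0^{\infty}\max\{0,L^n-\phi_2(x)\}\,\dif x\ >\ \frac{1}{L^n}\int_0^{x_0}\bigl(L^n-2nx^{n-1}\bigr)\dif x=\frac{n-1}{n}\Bigl(\frac{L^n}{2n}\Bigr)^{\frac{1}{n-1}}.
\]
The inequality is strict because on $(0,x_0)$ the first integrand strictly dominates the second, and both are continuous. Hence
\[
\delta_\eta(X,L)<n\Bigl(\frac{2n}{L^n}\Bigr)^{\frac1{n-1}},\qquad \delta_\eta(X,L)^{n-1}L^n<2n^n .
\]

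Finally, $\beta_\eta(X,L)=\min\{1,\delta_\eta(X,L)\}$ by Theorem \ref{thm:bbj}, which gives $\beta_\eta^nL^n\le\delta_\eta^{n-1}L^n<2n^n$. The main point to check is the strictness, and it comes only from the gain $x^n$ (respectively $x^n-(x-2)^n$) in $\phi_2$. This requires that $x_0>0$, which is automatic since $L^n>0$.

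The one place that needs care is the claim $A_X(E)=n-1$ for this valuation. Since $v_1$ is the monomial valuation with all weights $1$ transversal to a smooth local branch, its log discrepancy is the sum of the weights, namely $n-1$, as computed in \cite{LM25}. If instead only $A_X(E)\le n-1$ were available, the argument above would go through unchanged.
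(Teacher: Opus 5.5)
Your proposal is correct and follows essentially the same route as the paper. You rescale to $s_\eta(X,L)=1$, use $A_\eta(E)\le A_X(E)=n-1$ together with the strict pointwise bound $\phi_2(x)<2nx^{n-1}$ from Proposition \ref{prop:phi}(2) to get $\delta_\eta^{n-1}L^n<2n^n$, and conclude via $\beta_\eta\le\min\{1,\delta_\eta\}$; the monotonicity of $\phi_2$ that you verify is harmless but not actually needed for the strict integral comparison.
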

\begin{proof}
We can rescale $L$ so that $s_\eta(X,L)=1$.
By Proposition \ref{prop:phi}, we then have $\vol(L-xE)\geq L^n-\phi(x)$ where
\begin{eqnarray*}
    \phi(x)=
\begin{cases}
    2nx^{n-1}-x^n & \text{if} \,\, x\leq 2\\
    (n-1)x^n+(x-2)^n-n(x-2)x^{n-1}  & \text{if} \,\, x> 2
\end{cases}
\end{eqnarray*}
When $x\leq 2$, we have $\phi(x)\leq 2nx^{n-1}$. When $x>2$, we have $\phi(x)=2nx^{n-1}-x^n+(x-2)^n<2nx^{n-1}$. Let $\psi(x)\coloneqq 2nx^{n-1}$. In particular, we have $\vol(L-xE)>L^n-\psi(x)$. Then, using $A_X(E)\ge A_\eta(E)$, we get: 
\begin{eqnarray*}
    \delta_\eta(X,L)\leq \frac{A_X(E)}{S(L;E)}<\frac{n-1}{\frac{1}{V}\int_0^{\psi^{-1}(L^n)} (L^n-\psi(x))\, \dif x}=\frac{n-1}{\frac{n-1}{n}\left(\frac{L^n}{2n}\right)^{\frac{1}{n-1}}}=\frac{n(2n)^{\frac{1}{n-1}}}{(L^n)^{\frac{1}{n-1}}}.
\end{eqnarray*}
Then we deduce that
\begin{eqnarray*}
    \beta_\eta(X,L)^n\cdot L^n\leq  \delta_\eta(X,L)^{n-1}\cdot L^n <2n^n.
\end{eqnarray*}
\end{proof}

\subsection{The equality case}
Let $C\subseteq X$ be an embedded minimal rational curve with trivial normal bundle. 
In this part we argue that $\beta_{\eta}(X,L)^n\cdot\vol(L)=2n^n$ occurs only when $X\cong\mathbb P^1\times \mathbb P^{n-1}$, hence finishing the proof of Theorem \ref{thm:d=2}. 

Assume, as always, that $n\geq 3$.
Let $\pi:\hat X\to X$ be the ordinary blowup of $X$ along $C$, with exceptional divisor $E$. 
Let
$$\epsilon(L;C):=\max\{x>0:\pi^*L-xE\text{ is nef}\}$$ denote the Seshadri constant $L$ with respect to $C$. It is well known that $\epsilon(L;C)>0$ and that $\pi^*L-xE$ is ample for any $x\in(0,\epsilon(L;C))$ (see e.g. \cite[Lemma 2.9]{B02}).

The key technical result we need is the following characterization of $\epsilon(L;C)$ using the volume function $\vol(\pi^*L-xE)$, whose proof will consist of several steps. 
\begin{prop}
\label{prop:e=vol}
    For any ample $\mathbb R$-line bundle $L$ on $X$ one has
    $$
    \epsilon(L;C)=\max\{a>0:\vol(\pi^*L-xE)=(\pi^*L-xE)^n\text{ for any }0\leq x\leq a\}.
    $$
\end{prop}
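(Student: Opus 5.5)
Write $a^*$ for the right-hand side and $\epsilon=\epsilon(L;C)$. I will prove $\epsilon\le a^*$ and $a^*\le\epsilon$ separately.

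\emph{The inequality $\epsilon\le a^*$.} For $0\le x\le\epsilon$ the class $\pi^*L-xE$ is nef. The volume of a nef $\mathbb R$-class equals its top self-intersection: this holds for $\mathbb Q$-classes by asymptotic Riemann--Roch, and extends to $\mathbb R$-classes by continuity of both sides. Hence $\vol(\pi^*L-xE)=(\pi^*L-xE)^n$ on $[0,\epsilon]$, which gives $\epsilon\le a^*$.

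\emph{The inequality $a^*\le\epsilon$.} Suppose instead that $\vol(\pi^*L-xE)=(\pi^*L-xE)^n$ on $[0,a]$ for some $a>\epsilon$. I aim to show $\pi^*L-xE$ is nef for $x\le a$, which contradicts the definition of $\epsilon$.

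I will use Okounkov bodies on $Y=\hat X$ with an admissible flag whose first term is $Y_1=E\cong C\times\mathbb P^{n-2}$ (a trivial projective bundle, since $N_{C/X}$ is trivial). The remaining terms will be a general complete flag inside $E$ ending at a general point $y\in E$. Since $\pi^*L$ is nef and big, Theorem \ref{thm:KL} gives $0\in\Delta(\pi^*L)$. By \eqref{eq:body-trans}, $\Delta(\pi^*L-xE)$ is the slice $\Delta(\pi^*L)\cap\{x_1\ge x\}$ translated by $-x\vec e_1$. By the converse in Theorem \ref{thm:KL}, it therefore suffices to show that the point $(x,0,\dots,0)$ lies in $\Delta(\pi^*L)$ for $x\le a$, for a sufficiently general flag. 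That would give $y\notin\mathbf B_-(\pi^*L-xE)$ for general $y\in E$.

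To locate $(x,0,\dots,0)$ I will differentiate the volume. By \eqref{eq:vol-of-trucated-body},
\[
\tfrac{1}{n!}\bigl(\vol(\pi^*L)-\vol(\pi^*L-xE)\bigr)=\vol_{\mathbb R^n}\bigl(\Delta(\pi^*L)\cap\{x_1\le x\}\bigr).
\]
Differentiating in $x$ shows that the $(n-1)$-volume of the slice $\Delta\cap\{x_1=x\}$ equals $\frac{1}{(n-1)!}\,\vol_{\hat X|E}(\pi^*L-xE)$, the restricted volume. Since the volume is polynomial on $[0,a]$, the slice volume equals $\frac{1}{(n-1)!}\bigl((\pi^*L-xE)|_E\bigr)^{n-1}$.

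On $E\cong C\times\mathbb P^{n-2}$, we have $-E|_E=\mathcal O(1)$ on the fibres. The restriction $(\pi^*L-xE)|_E$ is $\pi^*(L|_C)+x\,\xi$, which is ample on $E$ for all $x>0$. The slice $\Delta(\pi^*L)\cap\{x_1=x\}$ is contained in the Okounkov body $\Delta_{E_\bullet}\bigl((\pi^*L-xE)|_E\bigr)$. By the volume equality just derived, it has the same volume, so the two coincide (both are convex bodies, so equal volume plus inclusion gives equality). Because $(\pi^*L-xE)|_E$ is ample, Theorem \ref{thm:KL} puts the origin in its Okounkov body, and hence $(x,0,\dots,0)\in\Delta(\pi^*L)$ for all $x\le a$.

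\emph{Concluding.} We now have $y\notin\mathbf B_-(\pi^*L-xE)$ for general $y\in E$, so $E\not\subset\mathbf B_-$. Outside $E$, the non-nef locus is controlled as follows. A curve $\Gamma$ with $(\pi^*L-xE)\cdot\Gamma<0$ must lie in $\mathbf B_-$. If $\Gamma\not\subset E$, then $\Gamma$ meets $E$ properly, so it lies in $\mathbf B_-$ and hits $E$. I then use that $\mathbf B_-$ is a countable union of subvarieties, together with convexity in $x$ and the fact that nefness fails first at $x=\epsilon$, to conclude. The cleanest way is to observe that just past $\epsilon$ the negative curves cover a subvariety $V$ that varies continuously in $x$. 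Moreover, when $x$ is slightly larger than $\epsilon$, the difference $\vol-(\cdot)^n$ is detected by the Zariski-decomposition-type correction along $V$, which is nonzero by the differentiability of volume (Boucksom--Favre--Jonsson): the derivative of $\vol$ is $n\vol_{\hat X|E}$, while the derivative of $(\cdot)^n$ is $n(\cdot)^{n-1}\cdot E$.

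\textbf{Main obstacle.} This last step, ruling out a non-nef locus disjoint from general points of $E$ while the volume remains polynomial, is the one I expect to be hardest. My first attempt will be to show that the equality of derivatives forces $\vol_{\hat X|E}=(\cdot)^{n-1}\cdot E$, and then deduce from the Okounkov slices at general flags (centered at arbitrary points of $\hat X$, not only on $E$) that every point avoids $\mathbf B_-$.
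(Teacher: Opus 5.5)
Your inequality $\epsilon\le a^*$ is fine, and your slice argument is sound. The gap is in the concluding step.

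\textbf{The gap.} You only claim $y\notin\mathbf B_-(\pi^*L-xE)$ for a \emph{general} $y\in E$, i.e.\ $E\not\subset\mathbf B_-(\pi^*L-xE)$. That is not enough: a curve $\Gamma$ with $(\pi^*L-xE)\cdot\Gamma<0$ is only forced to meet $E$ somewhere, possibly at special points. The patch you sketch (countable unions, a negative locus $V$ moving continuously in $x$, a ``Zariski-type correction'' made nonzero by Boucksom--Favre--Jonsson) is not an argument. In particular, the claim that failure of nefness just past $\epsilon$ forces $\vol\ne(\cdot)^n$ is essentially the proposition itself.

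\textbf{The fix.} Your slice argument never uses generality. Take \emph{every} $y\in E$ and any admissible flag of $E$ centered at $y$, e.g.\ the paper's $Y_{i+1}=C\times Z_i$ with the section $C$ through $y$. The slice $\Delta(\pi^*L)\cap\{x_1=x\}$ sits inside $\{x\}\times\Delta_{E_\bullet}\bigl((\pi^*L-xE)|_E\bigr)$ and has the same $(n-1)$-volume. The latter contains $(x,0,\dots,0)$, because $(\pi^*L-xE)|_E$ is ample on $E\cong C\times\IP^{n-2}$ and Theorem \ref{thm:KL} applies to every flag. Hence $E\cap\mathbf B_-(\pi^*L-xE)=\emptyset$ for $0<x<a$. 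Now conclude as the paper does. A curve $\Gamma$ with $(\pi^*L-xE)\cdot\Gamma<0$ lies in $\mathbf B_-(\pi^*L-xE)$. It cannot be disjoint from $E$, since otherwise $(\pi^*L-xE)\cdot\Gamma=L\cdot\pi_*\Gamma>0$. So $\Gamma$ meets $E$ inside $\mathbf B_-$, a contradiction. Thus $\pi^*L-xE$ is nef for $x<a$, and letting $x\nearrow a$ gives the result.

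\textbf{Comparison with the paper.} With this repair, your route genuinely differs from the paper's. The paper fixes the flag $Y_{i+1}=C\times Z_i$ adapted to the product structure of $E$. It proves that $\Delta(\pi^*L)\cap\{x_1\le\epsilon(L;C)\}$ is a product $\Sigma_\epsilon\times[0,p]$, using $x_n\le p$, Serre vanishing for $\mathbb Q$-classes, and then continuity. It then uses a ``volume cone implies cone'' argument to propagate the product structure, and hence the segment $[0,\Lambda_C(L)]\times\{0\}$, up to $x_1=\Lambda_C(L)$. Your slice-by-slice comparison replaces all of this with two inputs: ampleness of $(\pi^*L-xE)|_E$ for every $x>0$ (this is where triviality of $N_{C/X}$ enters), and polynomiality of the volume. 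It also works for arbitrary flags in $E$, so it is shorter.

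Two technical points. First, restricted volumes and BFJ are unnecessary. Fubini plus continuity of slice volumes in the interior (Brunn--Minkowski) give the slice volume $\frac{1}{n!}\frac{d}{dx}(npx^{n-1})=\frac{px^{n-2}}{(n-2)!}=\frac{1}{(n-1)!}\bigl((\pi^*L-xE)|_E\bigr)^{n-1}$ directly. Second, if you do quote Lazarsfeld--Musta\c{t}\u{a}'s slice theorem, note that $E\subseteq\mathbf B_+(\pi^*L)$, so apply it to $\pi^*L-x_0E$ with small $x_0>0$. For $\mathbb R$-classes, the slice inclusion follows from continuity of the global Okounkov body of $E$.
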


When $L$ is an ample $\mathbb Q$-line bundle Proposition \ref{prop:e=vol} follows from \cite[Lemma 3.2]{LM25}, whose proof however cannot be directly extended to the setting of $\mathbb{R}$-line bundles. So here we will use the strategy of \cite[Proposition 4.6]{Zhang22}, which relies on the theory of Okounkov bodies.

Let us first choose a suitable flag on $\hat X$. 
By abuse of notation, we denote by $C$ the section in $E\cong \IP(N_{C/X}^{\vee})$, so that $$E= C\times\IP^{n-2}.$$ Let us also fix a point $y\in C\subseteq E\subseteq \hat{X}$. This already gives us a (partial) flag of subvarieties in $\hat X$. We extend it to a complete flag as follows. Let $E_{y}\cong\IP^{n-2}$ be the fiber in $E$ passing through $y$. We then choose a flag of linear subspaces
$$
E_{y}=Z_0\supset Z_1\supset Z_2...\supset Z_{n-2}=\{y\}
$$
inside $E_{y}$, where $Z_i\cong\mathbb{P}^{n-2-i}$. Let us put
$$
Y_{i+1}=C\times Z_{i},\ i=0,...,n-2,\text{ and }Y_n=\{y\}.
$$
In this way we obtain an admissible flag $Y_\bullet$ of $\hat X$, where $Y_1=E$ and $Y_{n-1}=C$. Moreover,
$$
Y_i\cong \mathbb{P}^1\times \mathbb{P}^{n-1-i}. 
$$
Let $\Delta(L)$ denote the Okounkov body of $\pi^*L$ with respect to this flag. To prove Proposition \ref{prop:e=vol} the key point here is to show that $\Delta(L)$ is of product type near the origin (see Lemma \ref{lem:prod} below) and then we can apply Theorem \ref{thm:KL} as in \cite[Proposition 4.6]{Zhang22} to conclude. 

We begin by observing that  $Y_1\cdot C=0$ (since $C$ is a movable curve in $Y_1$), so that
$$(
\pi^*L-x_1Y_1)\cdot C=L\cdot C=p\ \text{for }x_1\geq 0.
$$
Proceeding inductively, one also has (for $(x_1,...,x_n)\in\mathbb{R}_{\geq 0}$)
\begin{equation}\label{eq:int-C=p}
    \begin{cases}
        ((\pi^*L-x_1 Y_1)|_{Y_1}-x_2Y_2)\cdot C=p,\\
        ...\\
        \bigg( \Big( \big( (\pi^*L - x_1 Y_1)|_{Y_1} - x_2 Y_2 \big)|_{Y_2} \dots \Big)|_{Y_{n-2}} - x_{n-1} Y_{n-1} \bigg) \cdot C = p.
    \end{cases}
\end{equation}

\begin{lem}
    We have that
\begin{equation}
    \label{eq:max-xn<p}
    \max\{x_n: (x_1,...,x_n)\in\Delta(L)\}\leq p.
\end{equation}
\end{lem}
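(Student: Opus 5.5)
The plan is to follow the inductive definition of the valuative map $\nu_{Y_\bullet}$ all the way down to the curve $Y_{n-1}=C$ and to read off $x_n$ as a vanishing order at the point $y$ of a section on $C\cong\IP^1$.

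First I treat points coming from genuine divisors. It suffices to bound $\nu_{Y_\bullet}(D)_n$ for $D$ an effective $\IR$-divisor numerically equivalent to $\pi^*L$. By the alternative description of $\Delta(L)$ for $\mathbb R$-line bundles recalled above, these valuation vectors are dense in $\Delta(L)$. Because the bound we want is a closed condition, passing to the closure then costs nothing.

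Write $\nu_{Y_\bullet}(D)=(k_1,\dots,k_n)$ and run the construction:
\begin{itemize}
\item $D_1:=(D-k_1Y_1)|_{Y_1}$ is an effective $\IR$-divisor on $Y_1$ not containing $Y_2$;
\item $D_2:=(D_1-k_2Y_2)|_{Y_2}$, and so on;
\item finally $D_{n-2}-k_{n-1}Y_{n-1}$ is an effective $\IR$-divisor on $Y_{n-2}$ not containing $Y_{n-1}=C$.
\end{itemize}
This last divisor is therefore well defined when restricted to $C$, giving an effective $\IR$-divisor $D_{n-1}$ on $C\cong \IP^1$. By construction $k_n=\ord_y(D_{n-1})$.

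The degree of $D_{n-1}$ is exactly the last intersection number in \eqref{eq:int-C=p} with $x_i=k_i$, namely $p$. Here it matters that each $k_i\ge 0$, so \eqref{eq:int-C=p} applies; it ultimately comes from $Y_1\cdot C=0$ and the product structure $Y_i\cong C\times \IP^{n-1-i}$. Each normal-bundle restriction $Y_{i+1}|_{Y_i}$ is pulled back from the $\IP^{n-1-i}$ factor, so it is trivial on $C$.

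An effective divisor on $\IP^1$ has multiplicity at a point at most its degree, so $k_n\le p$. Taking closure gives \eqref{eq:max-xn<p}.

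The main obstacle is making the inductive restriction rigorous for $\IR$-divisors. One has to check that, at each step, the restricted divisor is effective and does not contain the next flag element, so that the numerical identities \eqref{eq:int-C=p} compute $\deg D_{n-1}$. If this is delicate for $\IR$-divisors, I would instead prove the bound for sections $s\in H^0(\hat X,m\pi^*L')$ with $L'$ an ample $\IQ$-line bundle, where the Lazarsfeld--Musta\c{t}\u{a} construction is standard. I would then approximate $L$ by such $L'$ and use the continuity of Okounkov bodies \cite[Theorem B]{LM09} together with the continuity of $p=L\cdot C$ in $L$.
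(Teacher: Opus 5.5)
Your proposal is correct and follows essentially the same argument as the paper. Both take an effective $\IR$-divisor $D\equiv\pi^*L$ and restrict it successively along the flag. Both then use \eqref{eq:int-C=p} to see that the final divisor on $Y_{n-1}\cong\IP^1$ has degree $p$, so its order at $y$ is at most $p$, and pass to the closure. Your fallback approximation via $\IQ$-line bundles is not needed, since every $Y_i\cong\IP^1\times\IP^{n-1-i}$ is smooth and the restrictions of $\IR$-divisors are therefore unproblematic.
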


\begin{proof}
    For any effective $\mathbb R$-divisor $D\equiv \pi^*L$ on $\hat X$, assume that $\nu_{Y_\bullet}(D)=(x_1,...,x_n)$. Then \eqref{eq:int-C=p} implies that
    $$
    \mathrm{deg}(((((D-x_1 Y_1)|_{Y_1}-x_2Y_2)|_{Y_2}...)|_{Y_{n-2}}-x_{n-1}{Y_{n-1}})|_{Y_{n-1}})=p.
    $$
    So we deduce that 
    $$
    \ord_{Y_n}(((((D-x_1 Y_1)|_{Y_1}-x_2Y_2)|_{Y_2}...)|_{Y_{n-2}}-x_{n-1}{Y_{n-1}})|_{Y_{n-1}})\leq p,
    $$
    finishing the proof.
\end{proof}

\begin{lem}
    For any $(x_1,...,x_n)\in\mathrm{int}\Delta(L)\cap\{x_1<\epsilon(L;C)\}$ and $1\leq i\leq n-1$, we have that $$((((\pi^*L-x_1 Y_1)|_{Y_1}-x_2Y_2)|_{Y_2}...)|_{Y_{i}}-x_{i+1}{Y_{i+1}})\text{ is ample on }Y_{i}.$$
\end{lem}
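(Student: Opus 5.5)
We compute the restricted classes explicitly in terms of the product structure $Y_i\cong \IP^1\times\IP^{n-1-i}$, and then check ampleness by positivity of the coefficients.

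\emph{Setup.} Since $N_{C/X}$ is trivial, $E=\IP(N^\vee_{C/X})\cong C\times\IP^{n-2}$ with $\CO_E(-E)\cong \CO_{\IP(N^\vee)}(1)$. Hence
\[
E|_E=\mathrm{pr}_2^*\CO_{\IP^{n-2}}(-1),
\]
and $\pi^*L|_E=\mathrm{pr}_1^*\CO_{\IP^1}(p)$, with $p=L\cdot C$. Write $h_i$ for the hyperplane class on the second factor of $Y_i\cong \IP^1\times\IP^{n-1-i}$ and $f$ for the class of a point on the $\IP^1$ factor. Each $Y_{i+1}=C\times Z_i$ is a divisor in $Y_i$ of class $h_i$ for $i\ge 1$, since $Z_i\subset Z_{i-1}$ is a hyperplane.

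\emph{Computation.} I will prove by induction that the class in the statement, restricted to $Y_i$ and then reduced by $x_{i+1}Y_{i+1}$, equals
\[
p\,f+(x_1-x_2-\cdots-x_{i+1})\,h_i \qquad \text{on } Y_i\cong\IP^1\times\IP^{n-1-i}.
\]
For $i=1$, restricting $\pi^*L-x_1Y_1$ to $Y_1=E$ gives $pf+x_1h_1$, and subtracting $x_2Y_2$ gives $pf+(x_1-x_2)h_1$. For $i\ge 1$, the further restriction to $Y_{i+1}$ sends $f\mapsto f$ and $h_i\mapsto h_{i+1}$. At the last stage $i=n-1$, $Y_{n-1}=C\cong\IP^1$ and the class has degree $p$, consistent with \eqref{eq:int-C=p}.

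\emph{Ampleness criterion.} On $\IP^1\times\IP^m$ with $m\ge1$, the class $af+bh$ is ample iff $a>0$ and $b>0$. Here $a=p>0$ because $L$ is ample. So the statement reduces to showing that $x_1-x_2-\cdots-x_{i+1}>0$ for every $(x_1,\dots,x_n)\in\mathrm{int}\,\Delta(L)$ with $x_1<\epsilon(L;C)$, for each $1\le i\le n-2$. For $i=n-1$ the class is $pf$ on $C$, which is ample since $p>0$.

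\emph{Main obstacle: the positivity inequality.} Fix a point $x$ in the interior with $x_1<\epsilon$, and use \eqref{eq:body-trans}. The slice $\Delta(L)\cap\{x_1=t\}$, translated, is the Okounkov body of the restricted class. For $0<t<\epsilon$, $\pi^*L-tE$ is ample, so by standard Okounkov body slicing (\cite{LM09}, restricted volumes for ample classes) the fiber $\{x_1=t\}$ of $\Delta(L)$ equals $\Delta_{Y_\bullet|_{E}}\bigl((\pi^*L-tE)|_E\bigr)=\Delta_{E}(pf+th_1)$.

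Iterating on the products $\IP^1\times\IP^{m}$ with the linear flag, I expect the Okounkov body of $pf+th_1$ to be the product $\{(x_2,\dots,x_{n-1}): x_j\ge0,\ \sum x_j\le t\}\times[0,p]$. This is the "product type near the origin" that the paper announces as Lemma~\ref{lem:prod}, and it requires the computation on toric $\IP^1\times\IP^m$ with a torus-invariant flag. Consequently the interior points satisfy $x_2+\cdots+x_{n-1}<x_1$, which gives every needed inequality since all $x_j\ge0$.

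The delicate part is justifying the slice identification for $\IR$-classes at interior points. I would handle it by continuity/approximation of $t$ by rationals, which is \cite[Theorem B]{LM09}.
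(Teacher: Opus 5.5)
Your proof is correct in substance, but it reaches the key positivity by a different route from the paper's.

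The paper argues qualitatively and inductively. Since $(x_1,\dots,x_n)$ is an interior point of $\Delta(L)$, each successive class $(\cdots)|_{Y_i}-x_{i+1}Y_{i+1}$ is big on $Y_i\cong\IP^1\times\IP^{n-1-i}$. On such a product the big cone coincides with the ample cone, so the class is ample. The explicit form $t_iY_{i+1}+pZ_{i-1}$ is only recorded afterwards.

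You instead write the classes explicitly as $pf+(x_1-x_2-\cdots-x_{i+1})h_i$. You then prove the $h_i$-coefficient is positive by identifying each slice $\Delta(L)\cap\{x_1=t\}$, $0<t<\epsilon(L;C)$, with the toric Okounkov body $\{x_j\ge 0,\ x_2+\cdots+x_{n-1}\le t\}\times[0,p]$, via the slicing theorem of \cite{LM09}. This costs more: you need the slicing theorem, the identification of restricted and complete linear series for ample classes, and an approximation argument for $\IR$-classes. It also buys more: it gives the full product structure of $\Delta_\epsilon(L)$, with $\Sigma_\epsilon(L)$ identified as a cone over a simplex. So it would also replace the paper's Serre-vanishing construction and Lemma~\ref{lem:prod}.

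For this lemma alone, only the inclusion $\Delta(L)\subseteq\{x_2+\cdots+x_{n-1}\le x_1,\ x_n\le p\}$ is needed. That inclusion follows elementarily, as in the proof of \eqref{eq:max-xn<p}. Take an effective $D\equiv\pi^*L$ with $\nu_{Y_\bullet}(D)=(x_1,\dots,x_n)$. The effective divisor on $Y_i$ produced at the $i$-th step of the valuation procedure, after removing $x_{i+1}Y_{i+1}$, has class $pf+(x_1-\cdots-x_{i+1})h_i$. Effective classes on $\IP^1\times\IP^m$ have nonnegative coefficients, which gives the inclusion. This avoids the slicing theorem altogether, and shows that the hypothesis $x_1<\epsilon(L;C)$ plays no role for $i\ge 1$.

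One small correction concerns $i=n-1$. There the class in the statement is $(\cdots)|_{Y_{n-1}}-x_nY_n$ on $Y_{n-1}\cong\IP^1$, and $Y_n$ is a point, of class $f$ rather than $h$. So the class has degree $p-x_n$, not $p$. Your assertion that $Y_{i+1}$ has class $h_i$ in $Y_i$ holds only for $i\le n-2$. Ampleness at $i=n-1$ therefore needs $x_n<p$. This does follow from interiority together with the $[0,p]$ factor of your slice description, or equivalently from \eqref{eq:max-xn<p}.
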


\begin{proof}

One can see this inductively. Indeed, $\pi^*L-x_1 Y_1$ is ample on $\hat X$ for $0<x_1<\epsilon(L;C)$. Thus $(\pi^*L-x_1 Y_1)|_{Y_1}$ is ample on $Y_1$. Next, $(\pi^*L-x_1 Y_1)|_{Y_1}-x_2 Y_2$ is a big $\mathbb R$-line bundle on $Y_1$. But $Y_1\cong \mathbb P^1\times\mathbb P^{n-2}$, whose big cone coincides with the ample cone, so $(\pi^*L-x_1 Y_1)|_{Y_1}-x_2 Y_2$ is ample on $Y_1$. Then $((\pi^*L-x_1 Y_1)|_{Y_1}-x_2 Y_2)|_{Y_2}-x_3 Y_3)$ is big on $Y_2$. But since $Y_2\cong \mathbb P^1\times\mathbb P^{n-3}$, whose big cone coincides with the ample cone, we get that $((\pi^*L-x_1 Y_1)|_{Y_1}-x_2 Y_2)|_{Y_2}-x_3 Y_3)$ is ample on $Y_2$. Continuing in this way we conclude.
\end{proof}

Note that on $Y_{i}\cong \mathbb P^1\times \mathbb P^{n-1-i}$ ($1\leq i\leq n-2$) any ample class is generated by $Z_{i-1}$ and $Y_{i+1}$. Using \eqref{eq:int-C=p} we then have that, for any $(x_1,...,x_n)\in\mathrm{int}\Delta(L)\cap\{x_1<\epsilon(L;C)\}$,
\begin{equation*}
\label{eq:ample-sim-pZ}
    ((((\pi^*L-x_1 Y_1)|_{Y_1}-x_2Y_2)|_{Y_2}...)|_{Y_{i}}-x_{i+1}{Y_{i+1}})\sim_\mathbb R t_i Y_{i+1}+p Z_{i-1}
\end{equation*}
for some $t_i>0$. Moreover, on $Y_{n-1}\cong\mathbb P^1$, one has (by \eqref{eq:int-C=p})
\begin{equation}
    \label{eq:ample-sim-pZ-on-C}
     ((((\pi^*L-x_1 Y_1)|_{Y_1}-x_2Y_2)|_{Y_2}...)|_{Y_{n-2}}-x_{n-1}{Y_{n-1}})|_{Y_{n-1}}\sim_\mathbb R pZ_{n-2}=py.
\end{equation}

Now we can show that $\Delta(L)$ is of product type near the origin.
We first treat the case where $L$ is an ample $\mathbb Q$-line bundle.

\begin{lem} Let $L$ be an ample $\mathbb Q$-line bundle.
   For any
$
(x_1,...,x_n)\in\mathrm{int}\Delta(L)\cap\{x_1<\epsilon(L;C)\}
$
one has that
\begin{equation}
    \label{eq:Delta-contain-[0,p]}
    (x_1,...,x_{n-1})\times[0,p]\subset\Delta(L).
\end{equation}
\end{lem}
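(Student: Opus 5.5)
Fix $(x_1,\dots,x_n)\in\mathrm{int}\Delta(L)\cap\{x_1<\epsilon(L;C)\}$. The plan is to exhibit, for rational points $(x_1',\dots,x_{n-1}')$ close to $(x_1,\dots,x_{n-1})$ and every rational $t\in[0,p]$, effective divisors $D\equiv\pi^*L$ with $\nu_{Y_\bullet}(D)=(x_1',\dots,x_{n-1}',t)$. Convexity and closedness of $\Delta(L)$ then give the full segment, first at nearby rational points and then at the limit point $(x_1,\dots,x_{n-1})$.

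Since $\Delta(L)$ has nonempty interior and the interior is open, I will pick rational $(x_1',\dots,x_{n-1}')$ near $(x_1,\dots,x_{n-1})$ such that some $(x_1',\dots,x_{n-1}',x_n')$ lies in $\mathrm{int}\Delta(L)$ with $x_1'<\epsilon(L;C)$. The preceding lemma then applies. For each $1\le i\le n-1$, write
\[
L_i:=\big((\cdots(\pi^*L-x_1'Y_1)|_{Y_1}\cdots)|_{Y_{i-1}}-x_i'Y_i\big)|_{Y_i}.
\]
By the lemma and \eqref{eq:ample-sim-pZ-on-C}, each $L_i$ is an ample $\mathbb Q$-class on $Y_i$, and $L_{n-1}\sim_{\mathbb Q}py$ on $Y_{n-1}=C\cong\IP^1$. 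Here $L$ is a $\mathbb Q$-line bundle and the $x_i'$ are rational, so these are genuine $\mathbb Q$-line bundles.

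I construct $D$ from the bottom of the flag upward.
\begin{enumerate}
\item On $C\cong\IP^1$, take the effective divisor $D_{n-1}=ty+(p-t)q$ with $q\neq y$ a point of $C$. It has $\ord_y D_{n-1}=t$ and class $py$.
\item Suppose $D_i$ is an effective $\mathbb Q$-divisor on $Y_i$ with $D_i\equiv L_i|_{Y_{i}}$ restricted appropriately. On $Y_{i-1}$, consider $L_{i-1}-x_i'Y_i$. It is ample by the lemma, and $Y_{i-1}\cong\IP^1\times\IP^{n-i}$ has $H^1$ of every ample line bundle equal to $0$ (Künneth plus Kodaira). Hence for $m$ sufficiently divisible the restriction map
\[
H^0\big(Y_{i-1},m(L_{i-1}-x_i'Y_i)\big)\to H^0\big(Y_i,m(L_{i-1}-x_i'Y_i)|_{Y_i}\big)
\]
is surjective. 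Its kernel is $H^0(m(L_{i-1}-(x_i'+1/m)Y_i))$, whose $H^1$ also vanishes by ampleness; one can arrange this for the relevant twists.
\item Lift $mD_i$ to a section $s$ on $Y_{i-1}$ not vanishing on $Y_i$. Then $D_{i-1}:=\frac1m\mathrm{div}(s)+x_i'Y_i$ has $\ord_{Y_i}=x_i'$ exactly, and its residual restriction to $Y_i$ is $D_i$.
\item At the top step $Y_0=\hat X$, the class $\pi^*L-x_1'E$ is ample on $\hat X$ but $\hat X$ is not a product, so vanishing is not automatic. Since $L-x_1'E$ is ample, I will use Serre vanishing after replacing $m$ by a large multiple, or Kodaira vanishing applied to $m(\pi^*L-x_1'E)-E$. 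The latter is $K_{\hat X}$ plus something ample when $m$ is large, because $-K_{\hat X}$ is bounded relative to $m$.
\end{enumerate}

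By construction of the valuative map, $\nu_{Y_\bullet}(D_0)=(x_1',\dots,x_{n-1}',t)$, and $D_0\equiv\pi^*L$. Therefore the segment $(x_1',\dots,x_{n-1}')\times[0,p]$ is contained in $\Delta(L)$ after taking the closure over rational $t$. Letting $(x_1',\dots,x_{n-1}')\to(x_1,\dots,x_{n-1})$ and using closedness of $\Delta(L)$ gives \eqref{eq:Delta-contain-[0,p]}.

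The main obstacle is the lifting step: guaranteeing surjectivity of the restriction maps uniformly in $m$. On the product levels $Y_{i}$ this follows from vanishing on $\IP^1\times\IP^k$. On $\hat X$ it needs the $H^1$-vanishing of $m(\pi^*L-x_1'E)-E$ for divisible large $m$, which I would obtain from Kodaira or Serre vanishing using the ampleness of $\pi^*L-x_1'E$ for $x_1'<\epsilon(L;C)$.
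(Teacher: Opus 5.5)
Your proposal is correct and follows essentially the same argument as the paper. Both proofs lift from the bottom of the flag upward. They use the ampleness from the preceding lemma and vanishing (on the products $\IP^1\times\IP^{k}$, and Serre vanishing on $\hat X$) to get surjective restriction maps, starting from a degree-$p$ class on $C\cong\IP^1$. Your $\mathbb Q$-divisor formulation with $m$ chosen step by step, and your explicit density argument for reducing to rational $(x_1',\dots,x_{n-1}')$, only make precise what the paper does with sections and a single large divisible $m$ (the paper asserts that reduction without justification).
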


\begin{proof}
    It amounts to proving that $(x_1,...,x_{n-1},r)\in\Delta(L)$ for any $r\in(0,p)$. To this end, it suffices to treat the case where $(x_1,...,x_{n-1},r)\in\mathbb Q^n$. In what follows $m\in\mathbb N$ is fixed, which is large and sufficiently divisible so that $m (x_1,...,x_{n-1},r)\in\mathbb N^n_{>0}$. Our goal is to construct a section $s_0\in H^0(\hat X,m\pi^*L)$ such that $\nu_{Y_\bullet}(s_0)=m (x_1,...,x_{n-1},r)$. For this we will choose $m$ divisible enough so that Serre vanishing holds for each step of the construction below.

To ease notation, we also introduce ample $\mathbb Q$-line bundles
$
L_i
$
on $Y_i$ ($1\leq i\leq n-1$) in the following inductive manner:
$$
L_1:=(\pi^*L-x_1Y_1)|_{Y_1},\ L_i:=(L_{i-1}-x_i Y_i)|_{Y_{i}}.
$$

    Now, using \eqref{eq:ample-sim-pZ-on-C} and $Y_{n-1}\cong\mathbb P^1$ we can find $s_{n-1}\in H^0(Y_{n-1}, mL_{n-1})$ such that $$\ord_{Y_n}(s_{n-1})=mr.$$ Applying Serre vanishing on $Y_{n-2}$ to the ample line bundle $m(L_{n-2}-x_{n-1}{Y_{n-1}})$ we find that the map 
$
            H^0(Y_{n-2},m(L_{n-2}-x_{n-1}{Y_{n-1}}))\to
            H^0(Y_{n-1}, mL_{n-1})
$
    is surjective. So we obtain
    $$
    \tilde s_{n-1}\in H^0(Y_{n-2},m(L_{n-2}-x_{n-1}{Y_{n-1}}))
    $$
    whose restriction to $Y_{n-1}$ equals $s_{n-1}$. Then the natural inclusion 
$
            H^0(Y_{n-2},m(L_{n-2}-x_{n-1}{Y_{n-1}}))\to
            H^0(Y_{n-2},m L_{n-2}),
$
   maps   $\tilde s_{n-1}$ to $s_{n-2}\in H^0(Y_{n-2},mL_{n-2})$. So we have that
    $$
    \ord_{Y_{n-1}}(s_{n-2})=mx_{n-1}.
    $$

    We can then proceed as above to obtain a sequence of sections
    $
    s_{i}\in H^0(Y_{i}, m L_i)
    $
    with
    $$
    \ord_{Y_{i+1}}(s_i)=mx_{i+1}.
    $$
    So in particular, we have
    $s_{1}\in H^0(Y_{1},m(\pi^*L-x_1Y_1)|{Y_1})$ such that
    $$
    \ord_{Y_{2}}(s_{1})=mx_2.
    $$
    Finally, applying Serre vanishing on $\hat X$ to the ample line bundle $m(\pi^*L-x_1Y_1)$, we obtain $\tilde s_{1}\in H^0(\hat X,m(\pi^*L-x_1Y_1))$ whose restriction to $Y_{1}$ equals $s_{1}$. Under the inclusion $H^0(\hat X,m(\pi^*L-x_1Y_1))\to H^0(\hat X,m\pi^*L)$ we obtain $s_0\in H^0(\hat X,m\pi^*L)$ such that
    $$
    \ord_{Y_1}(s_0)=mx_1.
    $$
   By construction, it holds that $\nu_{Y_\bullet}(s_0)=m (x_1,...,x_{n-1},r)$, so we conclude.
\end{proof}

Combining \eqref{eq:max-xn<p} with \eqref{eq:Delta-contain-[0,p]}, we see that the convex body
$$
\Delta_\epsilon(L):=\{(x_1,...,x_n)\in\Delta(L),\ x_1\leq \epsilon(L;C)\}
$$
is of product type for any ample $\mathbb Q$-line bundle $L$ and can be decomposed as 
$$
\Delta_\epsilon(L)=\Sigma_\epsilon(L)\times[0,p]=\{(x_1,...,x_n): (x_1,...,x_{n-1})\in\Sigma_\epsilon(L),\ x_n\in[0,p]\}
$$
for some convex body $\Sigma_\epsilon(L)\subset\mathbb{R}^{n-1}_{\geq 0}.$
Moreover, one can compute that (by \eqref{eq:vol-of-trucated-body})
$$
\vol_{\mathbb R^n}(\Delta_\epsilon(L))=\frac{\vol(L)-\vol(\pi^*L-\epsilon(L;C)E)}{n!}=\frac{L^n-(\pi^*L-\epsilon(L;C)E)^n}{n!}=\frac{p\epsilon(L;C)^{n-1}}{(n-1)!}.
$$
This implies that
$$
\vol_{\mathbb R^{n-1}}(\Sigma_\epsilon)=\frac{\epsilon(L;C)^{n-1}}{(n-1)!}.
$$

Now let us go back to the case where $L$ is ample $\mathbb R$-line bundle. Then one can pick a sequence of ample $\mathbb Q$-line bundles $L_i$ with $L_i\to L$ in $N^1(X)_{\mathbb R}$ and $L_i-L$ ample, so that $\epsilon(L_i;C)\geq \epsilon(L;C)$. Then in particular the convex bodies $\Delta_{\epsilon_i}(L_i)$ associated to $\pi^*L_i$ does not collapse in $\mathbb R^{n}$ and 
using the continuity of Okounkov bodies in the big cone \cite[Theorem B]{LM09} we obtain the following.

\begin{lem}
\label{lem:prod}
    For any ample $\mathbb R$-line bundle $L$, the convex body\
    $$
\Delta_\epsilon(L):=\{(x_1,...,x_n)\in\Delta(L),\ x_1\leq \epsilon(L;C)\}
$$
is of product type and can be decomposed as 
$$
\Delta_\epsilon(L)=\Sigma_\epsilon\times[0,p]=\{(x_1,...,x_n): (x_1,...,x_{n-1})\in\Sigma_\epsilon(L),\ x_n\in[0,p]\}
$$
for some convex body $\Sigma_\epsilon(L)\subset\mathbb{R}^{n-1}_{\geq 0}.$ 
\end{lem}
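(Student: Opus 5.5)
We pass to the limit from the $\mathbb Q$-case by approximating $L$ from above. Choose ample $\mathbb Q$-line bundles $L_i\to L$ in $N^1(X)_{\mathbb R}$ with $L_i-L$ ample. Then $\pi^*L_i-xE=(\pi^*L-xE)+\pi^*(L_i-L)$ is nef whenever $\pi^*L-xE$ is, so $\epsilon_i:=\epsilon(L_i;C)\ge\epsilon:=\epsilon(L;C)$. Set $p_i:=L_i\cdot C$, so that $p_i\to p$.

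For each $i$, the preceding lemma together with \eqref{eq:max-xn<p} gives
$$\Delta(L_i)\cap\{x_1\le \epsilon_i\}=\Sigma_{\epsilon_i}(L_i)\times[0,p_i].$$
Intersecting with the half-space $\{x_1\le\epsilon\}$ preserves the product structure. Hence
$$\Delta(L_i)\cap\{x_1\le\epsilon\}=\Sigma^{(i)}\times[0,p_i],$$
where $\Sigma^{(i)}:=\Sigma_{\epsilon_i}(L_i)\cap\{x_1\le\epsilon\}$. A caveat is that the lemma was only stated for interior points. The product structure on the closed truncated body therefore has to be obtained by taking closures: the closure of $(\mathrm{int}\,\Delta\cap\{x_1<\epsilon_i\})$ is $\Delta\cap\{x_1\le\epsilon_i\}$, because $\Delta$ is a convex body and the slab $\{x_1<\epsilon_i\}$ meets its interior (since $0<\epsilon_i$ and the body has nonempty interior).

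Next we apply \cite[Theorem B]{LM09}. Since $\pi^*L_i\to\pi^*L$ inside the big cone of $\hat X$, the Okounkov bodies $\Delta(L_i)$ converge to $\Delta(L)$ in the Hausdorff metric. The main obstacle is passing this convergence through the truncation $\{x_1\le\epsilon\}$. Hausdorff convergence of convex bodies does not in general commute with intersection by a half-space. It does commute when the hyperplane $\{x_1=\epsilon\}$ meets the interior of the limit body, or when the truncation level lies at or below the minimum of $x_1$.

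To handle this, we note that $x_1$ ranges over $[0,\mu]$ on $\Delta(L)$, with $\mu$ the pseudoeffective threshold of $\pi^*L$ along $E$. We have $\epsilon\le\mu$, because $\pi^*L-\epsilon E$ is nef and hence pseudoeffective. Two cases arise.
\begin{itemize}
\item If $\epsilon<\mu$, the hyperplane $\{x_1=\epsilon\}$ cuts the interior of $\Delta(L)$. By a standard convexity argument, $\Delta(L_i)\cap\{x_1\le\epsilon\}\to\Delta(L)\cap\{x_1\le\epsilon\}=\Delta_\epsilon(L)$ in the Hausdorff sense.
\item If $\epsilon=\mu$, then $\Delta_\epsilon(L)=\Delta(L)$. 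In this case we use a limiting argument with levels $\epsilon'\uparrow\epsilon$ and take closures.
\end{itemize}
The non-collapsing remark in the text ensures the limit is a genuine convex body. Indeed, the volumes $\vol(\Delta(L_i)\cap\{x_1\le\epsilon\})$ stay bounded below by $\frac{p_i\epsilon^{n-1}}{(n-1)!}$ via \eqref{eq:vol-of-trucated-body}, since $(\pi^*L_i-xE)^n$ is the volume for $x\le\epsilon$.

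Finally, we use that Hausdorff limits of products $\Sigma^{(i)}\times[0,p_i]$ are products. The projections $\Sigma^{(i)}$ onto the first $n-1$ coordinates lie in a bounded set, so by Blaschke selection they have a subsequence converging to some compact convex $\Sigma_\epsilon(L)$. Since $p_i\to p$, the product sets converge to $\Sigma_\epsilon(L)\times[0,p]$. By uniqueness of Hausdorff limits this equals $\Delta_\epsilon(L)$, which proves the claimed decomposition. For consistency, we can also double-check the upper bound $x_n\le p$ directly from \eqref{eq:max-xn<p}, which holds for $\mathbb R$-classes as proven.
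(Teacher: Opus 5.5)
Your proposal is correct and follows the paper's route. You approximate $L$ from above by ample $\mathbb Q$-classes $L_i$ with $L_i-L$ ample, so that $\epsilon(L_i;C)\ge\epsilon(L;C)$. You then invoke the product structure from the $\mathbb Q$-case lemma together with \eqref{eq:max-xn<p}, and pass to the limit using the continuity of Okounkov bodies \cite[Theorem B]{LM09}.

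Your additional care fills in details the paper leaves implicit. You handle the closure from interior points, truncate at the fixed level $\epsilon(L;C)$, and treat separately the case where $\epsilon(L;C)$ equals the pseudoeffective threshold. That case genuinely occurs, e.g.\ for $\IP^1\times\IP^{n-1}$, and your levels $\epsilon'\uparrow\epsilon$ argument handles it correctly.
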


Now we are ready to prove Proposition \ref{prop:e=vol}, following the strategy in \cite[Proposition 4.6]{Zhang22}. First, it clear that
$$
\epsilon(L;C)\leq \max\{a>0:\vol(\pi^*L-xE)=(\pi^*L-xE)^n\text{ for any }0\leq x\leq a\}.
$$
Denote the right hand side by $\Lambda_C(L)$. In view of \eqref{eq:body-trans} and Theorem \ref{thm:KL}, it will be enough to show that 
$\Delta(L)$ contains the line segment $[0,\Lambda_C(L)]\times(0,...,0)$.

For any $x\in[0,\Lambda_C(L)]$
we put
$$
\Delta_x(L):=\{(x_1,...,x_n)\in\Delta(L),\ x_1\leq x\}
$$
and
$$
\Sigma_x(L):=\{(x_1,...,x_{n-1}): (x_1,...,x_n)\in\Delta,\ x_1\leq x\}.
$$
Then one has (recall \eqref{eq:vol-of-trucated-body})
\begin{equation}
    \label{eq:vol=x-n-1}
    \vol_{\mathbb R^n}(\Delta_x(L))=\frac{1}{n!}(\vol(L)-\vol(\pi^*L-xE))=\frac{px^{n-1}}{(n-1)!}, x\in[0,\Lambda_C(L)].
\end{equation}
Here $p=L\cdot C$.
Then for $x\in[0,\epsilon(L;C)]$, using the product structure of $\Delta_x$, we obtain that
$$
\vol_{\mathbb R^{n-1}}(\Sigma_x(L))=\frac{x^{n-1}}{(n-1)!},\ x\in[0,\epsilon(L;C)].
$$
As in the proof of \cite[Proposition 4.6]{Zhang22}, the volume growth of this order forces that the convex sets $\Sigma_x(L)$ grow linearly in $x$:
$$
\frac{x}{y}\Sigma_{y}(L)=\Sigma_x(L),\ 0<y\leq x\leq\epsilon(L;C).
$$
Indeed, by convexity and $0\in\Sigma_{\epsilon}(L)$, one has $\frac{x}{y}\Sigma_{y}(L)\supseteq\Sigma_x(L)$. But since both sides have volume $x^{n-1}/(n-1)!$, so the equality follows.
Thus $\Sigma_{\epsilon}(L)$ is a convex cone in $\mathbb R^{n-1}$ with apex at the origin (compare also the philosophy in Cheeger--Colding theory \cite{CC97}: volume cone implies metric cone). Then by convexity we obtain the inclusion of convex bodies:
$$
\Delta_{\Lambda_C(L)}(L)\subseteq \frac{\Lambda_C(L)}{\epsilon(L;C)}\Sigma_\epsilon(L)\times[0,p].
$$
However, both sides have the same volume, thanks to \eqref{eq:vol=x-n-1}. So $\Delta_{\Lambda_C(L)}(L)$ is of product type as well:
$$
\Delta_{\Lambda_C(L)}(L)=\frac{\Lambda_C(L)}{\epsilon(L;C)}\Sigma_\epsilon(L)\times[0,p].
$$

Finally, noting that the segment $[0,\epsilon(L;C)]\times(0,...,0)$ is contained in $\Delta(L)$ by \eqref{eq:body-trans} and Theorem \ref{thm:KL}, then the product structure of $\Delta_{\Lambda_C(L)}(L)$ implies that the segment $[0,\Lambda_C(L)]\times(0,...,0)$ is contained in 
$\Delta(L)$ as well. Pick any $0<\tau<\Lambda_C(L)$, so $\pi^*L-\tau E$ is big. Then we get that
 $y\notin \mathbf{B}_-(\pi^*L-\tau E)$ by \eqref{eq:body-trans} and Theorem \ref{thm:KL} again. Since $y\in E\cong\mathbb P^1\times\mathbb P^{n-2}$ can be an arbitrary point, we obtain that $E\cap\mathbf{B}_-(\pi^*L-\tau E)=\emptyset.$ This implies that $(\pi^*L-\tau E)$ is nef on $\hat X$. Indeed, suppose otherwise that there exists a curve $V\subset \hat X$ such that $(\pi^*L-\tau E)\cdot V<0$. Then $V\subset \mathbf{B}_-(\pi^*L-\tau E)$, and by the projection formula one also has that $E\cap V\not =\emptyset$. Thus $E\cap\mathbf{B}_-(\pi^*L-\tau E)\not=\emptyset$, a contradiction. Sending $\tau\nearrow \Lambda_C(L)$ we conclude Proposition \ref{prop:e=vol}.


\begin{proof}[Completion of Proof of Theorem \ref{thm:d=2}]
Let $C\subset X$ a general minimal rational curve with trivial normal bundle. Then $-K_X\cdot C=2$. 
By Proposition \ref{prop:trivialNormal}, $\beta_\eta(X, L)^n\cdot L^n\le 2n^n$. 
   Moreover from its proof and Proposition \ref{prop:immersed}, when the equality holds, we must have $$\beta_\eta(X,L)=s_\eta(X,L)=\delta_\eta(X,L)=1,\, L^n=2n^n,\  p=L\cdot C=2.$$ And $C$ is an embedded minimal rational curve. By Proposition \ref{prop:e=vol}, we have
    \begin{eqnarray*}
       \epsilon(L;C)\geq \left(\frac{L^n}{np}\right)^{\frac{1}{n-1}}=n.
    \end{eqnarray*}
    We  have the following equality of classes in $H^{1,1}(X, \IR)$: 
    \begin{eqnarray*}
        \pi^*c_1(X, \eta)-n [E]=\left(\pi^*c_1(L)-n[E]\right)+\pi^*\left(c_1(X, \eta)-c_1(L)\right),
    \end{eqnarray*}
    which then implies $\epsilon(c_1(X, \eta);C)\geq n$.
    If $\eta=2\pi\{\Delta\}+\eta_1$ with $\eta_1$ a closed positive (1,1)-current with a nef class, then $(X, \Delta)$ is a log Fano pair and $\epsilon(X, \Delta; C)\ge n=\mathrm{codim}(C)+1$. By 
    Proposition \ref{prop:seshadri-for-log-fano}, $X$ is biholomorphic to $\IP^1\times \IP^{n-1}$. 
    Since $c_1(X, \eta)-\beta_\eta(X, L) c_1(L)$ is a nef class, we have the estimates:
    \begin{equation*}
       \beta_\eta(X, L)^n\cdot L^n\le c_1(X, \eta)^n\le (-K_X)^n=2n^n.
    \end{equation*}
    We claim that the second inequality is strict if $[\eta]\neq 0$. To see this, consider the function $f(t)=([-K_X]-t \frac{[\eta]}{2\pi})^n$ for $t\in [0,1]$. It derivative is equal to:
    \begin{equation*}
        f'(t)=-n \left([-K_X]-t \frac{[\eta]}{2\pi}\right)^{n-1}\cdot \frac{[\eta]}{2\pi}.
    \end{equation*}
    Because the class $[-K_X]-t \frac{[\eta]}{2\pi}$ for $t\in [0,1]$, as a convex linear combination of ample classes, is clearly an ample class, 
    we have $f'(t)<0$ if the pseudoeffective class $[\eta]\neq 0$. After integration, we indeed get $f(1)=c_1(X, \eta)^n< f(0)=(-K_X)^n$.

\end{proof}



\section{Minimal rational curve with $3\le d\le n-1$}

Let $(X, \eta)$ be a twisted Fano pair and let $C=f(\IP^1)\subset X$ be a minimal rational curve of degree $d=(-K_X)\cdot C$ with $3\le d\le n-1$. In particular, we assume that $n=\dim X\ge 4$.  We prove the following estimates. 

\begin{thm}\label{thm:middleDegree}
    For any ample $\IR$-line bundle $L$, we have $\beta_\eta(X,L)^n\cdot L^n <2n^n$ when $3\leq d\leq n-1$.
\end{thm}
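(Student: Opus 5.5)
We follow the pattern of Proposition \ref{prop:trivialNormal}, now using the weighted valuation $v_2=\ord_E$ attached to the minimal rational curve $C$ of degree $d$.

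\textbf{Setup and log discrepancy.} After rescaling $L$ we may assume $s_\eta(X,L)=1$, so that $\beta_\eta(X,L)=\min\{1,\delta_\eta(X,L)\}$ and Proposition \ref{prop:phi}(3) applies. The valuation $v_2$ comes from the $(1^{\oplus(d-2)},2^{\oplus(n-d+1)})$-weighted blowup along $C$. Its log discrepancy is the sum of the weights,
$$
A_X(E)=(d-2)+2(n-d+1)=2n-d,
$$
and we use $A_\eta(E)\le A_X(E)$.

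\textbf{Lower bound for $S(L;E)$.} By Proposition \ref{prop:phi}(3),
$$
S(L;E)\ge \frac{1}{L^n}\int_0^{\infty}\max\{L^n-\phi_d(x),0\}\,\dif x .
$$
Set $V=L^n$ and argue by contradiction, assuming $V\ge 2n^n$; otherwise $\beta_\eta^nV\le V<2n^n$ and we are done. Then
$$
\beta_\eta^n V\le \delta_\eta^{\,n-1}V\le \left(\frac{2n-d}{S}\right)^{n-1}V .
$$
It therefore suffices to show
$$
F(V):=\frac{V^{1/(n-1)}}{2n-d}\cdot\frac{1}{V}\int_0^\infty (V-\phi_d)_+\,\dif x \;>\; \frac{V^{1/(n-1)}}{n}\,(2n^n)^{-1/(n-1)}\cdot\ldots
$$
More cleanly, we want $\frac{(2n-d)^{n-1}V}{S^{n-1}}<2n^n$ whenever $V\ge 2n^n$. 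The ratio $\frac{V}{S^{n-1}}$ decreases as $V$ grows: $S$ grows at least like $V^{1/(n-1)}$ because $\phi_d$ grows at least like $x^{n-1}$ near the relevant range. Hence it suffices to check the single value $V=2n^n$, i.e. to show
$$
\int_0^\infty \bigl(2n^n-\phi_d(x)\bigr)_+\,\dif x \;>\; 2n^n\cdot\frac{2n-d}{n}.
$$
If $V$ is large enough that $V>\sup_x\phi_d$, the integrand never vanishes and the bound fails badly; we handle this by also using the bound $\beta_\eta\le s_\eta=1$.

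\textbf{Main computation and obstacle.} The key step is evaluating this integral with the explicit piecewise $\phi_d$. On $[0,d]$ we have
$$
\phi_d(x)=2^{-(n-d+1)}\bigl(ndx^{n-1}-(d-2)x^n\bigr),
$$
which is tiny compared with $2n^n$. For instance, $\phi_d(d)=2^{-(n-d+1)}\cdot 2d^n\ll 2n^n$. So the zero $x_0$ of $2n^n-\phi_d$ lies in the region $x>d$. There $\phi_d$ is the integral formula, a polynomial of degree $n-d+1$ in $x$ with leading coefficient
$$
\frac{n!}{2^{n-d+1}(n-d+1)!\,(d-3)!}\int_0^d\beta^{d-3}(d-\beta)\,\dif\beta=\frac{n!\,d^{d-1}}{2^{n-d+1}(n-d+1)!\,(d-1)!}.
$$
The growth is only of order $x^{n-d+1}$, so $x_0$ is large and the integral comfortably exceeds the target.

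I expect the obstacle to be making this uniform in $3\le d\le n-1$ and all $n\ge4$. My first attempt is a crude comparison: replace $\phi_d$ by the simple upper bound
$$
\phi_d(x)\le \frac{n!\,d^{d-1}}{2^{n-d+1}(n-d+1)!\,(d-1)!}\,x^{n-d+1},
$$
which is valid since $(x-\beta)^{n-d+1}\le x^{n-d+1}$. The integral of $(V-cx^{m})_+$ is then explicit:
$$
\int_0^\infty (V-cx^m)_+\,\dif x=\frac{m}{m+1}\,V^{1+1/m}c^{-1/m},\qquad m=n-d+1 .
$$
This reduces the claim to an elementary inequality in $n$ and $d$, checked via Stirling-type bounds or monotonicity in $d$. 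The tightest cases should be $d=n-1$ (so $m=2$) and small $n$, and these may need to be verified by hand using the exact $\phi_d$ rather than the crude bound.
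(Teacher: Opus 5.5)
\textbf{Gap: the decisive numerical inequality is missing, and the bound you propose for it fails in every case.}

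Your reduction has several slips, although once corrected it leads to exactly the inequality the paper needs.

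\begin{itemize}
\item At $V=2n^n$ the condition is $S(L;E)>2n-d$, i.e. $\int_0^\infty(2n^n-\phi_d)_+\,\dif x>2n^n(2n-d)$. You wrote $2n^n(2n-d)/n$, and with that extra $1/n$ the inequality becomes trivial.
\item For $V/S^{n-1}$ to be non-increasing you need $\phi_d$ to grow \emph{at most} like $x^{n-1}$, precisely $x\phi_d(x)\le n\Phi_d(x)$, i.e. $\Phi_d(x)/x^n$ non-increasing. You stated the reverse. The correct version is true but has to be checked.
\item $\sup\phi_d=+\infty$, so your remark about $V>\sup\phi_d$ is vacuous.
\item $\phi_d(d)=2^{-(n-d+1)}(n-d+2)d^n$, not $2^{-(n-d+1)}\cdot 2d^n$.
\end{itemize}

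The paper avoids the monotonicity detour by normalizing $\beta_\eta(X,L)=1$. It splits into $\delta_\eta\ge s_\eta$ and $\delta_\eta<s_\eta$; in both cases $s_\eta\ge 1$, so Proposition \ref{prop:phi} applies. This gives $F^{\phi_d}(V)\le 2n-d$ directly, and only the easy monotonicity of $V\mapsto F^{\phi_d}(V)$ is needed. The result is $L^n\le\phi_d(T)$ with $(T-A)\phi_d(T)=\Phi_d(T)$. Equivalently, one needs $F^{\phi_d}(2n^n)>2n-d$, which is what your corrected reduction also requires.

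The real gap is that inequality $F^{\phi_d}(2n^n)>2n-d$ itself. It is the whole content of the theorem, and the paper imports it from \cite[Proposition 4.8]{LM25}. It is far from comfortable.
\begin{itemize}
\item For $n=4$, $d=3$ one has $\phi_3(x)=\frac{27}{2}x^2-27x+\frac{81}{4}$ for $x>3$. This gives $\tau\approx 7.118$ and $F^{\phi_3}(512)\approx 5.03$, against $A=5$.
\item Your crude bound $\phi_d\le c\,x^{n-d+1}$, obtained by replacing $(x-\beta)^{n-d+1}$ with $x^{n-d+1}$, fails in \emph{every} case, not just the boundary ones.
\item For $n=4$, $d=3$ it gives $\frac23\sqrt{512/13.5}\approx 4.11<5$.
\item For $d=n-1$ it gives $\frac83\bigl(\frac{n}{n-1}\bigr)^{(n-1)/2}\to\frac83\sqrt{e}\approx 4.4$, while $A=n+1$.
\item For $d=3$ and large $n$ it gives about $2n-3.6<2n-3$.
\end{itemize}
The true margin is $O(1)$, while the crude replacement loses a constant factor inside the $(n-d+1)$-th root, which already costs more than that margin. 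So no Stirling-type bookkeeping on the crude bound can close the argument. You must work with the exact piecewise $\phi_d$ and analyse the equation $(T-A)\phi_d(T)=\Phi_d(T)$ carefully, as done in \cite{LM25}.
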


\begin{proof}
    Suppose $\pi\colon \hat{X}\rightarrow X$ is the $(1^{\oplus(d-2)},2^{\oplus (n+1-d)})$-weighted blowup along the minimal rational curve $C$ with the exceptional divisor $E$, as discussed in Section \ref{sec-wtbl}. Let $\beta\coloneqq \beta_{\eta}(X,L),\, s\coloneqq s_{\eta}(X,L),\, \delta_\eta\coloneqq \delta_{\eta}(X,L)$. The log discrepancy of $\ord_E$ is given by $A\coloneqq A_X(E)=(d-2)+2(n-d+1)=2n-d$. 
    We denote $V\coloneqq L^n,\, \tau\coloneqq \phi_d^{-1}(V)$, where $\phi_d$ is the function in Proposition \ref{prop:phi}. We introduce $F^{\phi_d}(V)=\frac{1}{V}\int_0^{\tau} \left( V-\phi_d(x)\right)\, \dif x=\tau-\frac{1}{\phi_d(\tau)}\Phi_d(\tau)$, where $\Phi(x)\coloneqq \int_0^x \phi(t)\,\dif t$.  There are two cases to consider: 

\textbf{(Case I)} $\delta_\eta \ge s_\eta$. In this case, we rescale $L$ such that $s_\eta=1$ and hence $\beta_\eta=1$. Recall that $p=L\cdot C\leq d/s_{\eta}=d$. Then, 
\begin{eqnarray*}
    1\leq\delta_{\eta}(X,L)\leq \frac{A_{\eta}(E)}{S(L;E)}\leq 
    \frac{A}{\frac{1}{V}\int_0^{\tau} \left(V-\phi_d(x)\right)\,\dif x}=\frac{A}{F^{\phi_d}(V)}.
\end{eqnarray*}

\textbf{(Case II)} $\delta_\eta<s_\eta$. In this case, we rescale $L$ such that $\delta_\eta(X, L)=\beta_\eta=1$ so that $s_\eta> 1$ and $p=L\cdot f(\IP^1)\le d/s_\eta< d$. We then have:
$$
1=\delta_\eta(X, L)\le \frac{A_\eta(E)}{S(L; E)}\le \frac{A}{F^{\phi_d}(V)}. 
$$
It is easy to verify that the function $V\mapsto F^{\phi_d}(V)$ is increasing (see \cite[Lemma 2.4]{LM25}). So in any case, we get $L^n=V\leq (F^{\phi_d})^{-1}\left(A\right)=\phi_d(T)$ where $T$ is the unique solution to the equation
\begin{eqnarray}\label{eq:tau}
   (T-A)\phi_d(T)=\Phi_d(T).
\end{eqnarray}
By \cite[Proposition 4.8]{LM25}, we get $\beta_{\eta}^n\cdot L^n=L^n \leq \phi_d(T)<2n^n$.

\end{proof}

\section{Minimal rational curves with degree $n$}
In this section, we assume that $X$ is a Fano manifold. 
We have the following classification results. 
\begin{thm}\label{thm-lX}
Let $X$ be a smooth Fano manifold and set 
$$l_X=\min\{\deg(f); f:\IP^1\rightarrow X \text{ is a minimal rational curve on } X\}.$$ Then the following statements are true:
\begin{enumerate}
    \item (\cite{CMSB02}) $l_X=n+1$ if and only if $X\cong \IP^n$. 
    \item (\cite{Miy04, CD15, DH17}) If $n\geq 3$, then $l_X=n$ if and only if $X\cong Q^n$ or $X$ is the blowup of $\IP^n$ along a smooth codimension two subvariety $Y$ of degree $d_Y\in \{1,\dots, n\}$ that is contained in a hyperplane. 
\end{enumerate}
\end{thm}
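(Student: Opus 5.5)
The statement is a compilation of known classification results, so the proof consists of assembling the cited works and checking that their hypotheses match ours. We take $X$ Fano, so $X$ is uniruled and covered by minimal rational curves. Each such curve $f$ satisfies \eqref{eq-mrat}, hence $2\le \deg(f)\le n+1$. Consequently $l_X$ is a well-defined integer in $\{2,\dots,n+1\}$.

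\textbf{Part (1).} The easy direction is a direct check. On $\IP^n$ the lines are the minimal rational curves, and $-K_{\IP^n}\cdot \text{line}=n+1$, so $l_X=n+1$. For the converse, suppose $l_X=n+1$. Then every minimal rational curve has splitting type $\CO(2)\oplus\CO(1)^{\oplus(n-1)}$. Hence every rational curve through a general point has $-K_X$-degree at least $n+1$, and these curves are unsplit. This is exactly the hypothesis of Cho--Miyaoka--Shepherd-Barron \cite{CMSB02}, whose theorem gives $X\cong\IP^n$. We will invoke it directly, and note in passing the later simplification by Kebekus.

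\textbf{Part (2).} Again the easy direction is verification. On $Q^n$ the lines have $-K_{Q^n}\cdot\text{line}=n$. For $X=\bl_Y\IP^n$ with $Y\subset H$ as stated, we must check two things:
\begin{itemize}
\item[(a)] $X$ is Fano, which uses $d_Y\le n$;
\item[(b)] the minimal degree $n$ is attained, either by strict transforms of lines meeting $Y$ once, or by lines in fibers of the exceptional divisor, while no covering family of smaller degree exists.
\end{itemize}
For the converse, assume $l_X=n$ and $n\ge3$. Miyaoka \cite{Miy04} characterizes $Q^n$ when, in addition, $\rho(X)=1$. Dedieu--H\"oring \cite{DH17} (building on \cite{CD15}) treat the case of higher Picard number and produce exactly the blowup list. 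We will quote these results. The one thing to confirm is that their invariant, the minimal anticanonical degree of a covering (minimal dominating) family, coincides with our $l_X$. This holds because every minimal rational curve deforms in a covering family of minimal degree.

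The only delicate point is this compatibility of definitions, namely minimal degree over all minimal rational curves versus over dominating families. We will handle it by noting that the curves in \eqref{eq-mrat} are free, hence lie in dominating families, so the two minima agree. No new argument beyond the citations should be needed.
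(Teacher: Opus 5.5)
Your plan matches the paper, which gives no argument of its own and simply quotes \cite{CMSB02, Miy04, CD15, DH17}. Two steps in your verification, however, are wrong or only half done.

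\textbf{The fibers of $E\to Y$.} These fibers cannot realize $l_X$ on $X=\bl_Y\IP^n$. Since $-K_X=(n+1)\pi^*H-E$ and $E\cdot f=-1$ on a fiber $f$, one gets $-K_X\cdot f=1$. Moreover $f$ is not free, because $N_{E/X}|_f\cong\CO(-1)$. The degree $n$ is realized only by strict transforms of general lines meeting $Y$ once.

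The lower bound you asserted without proof takes one line. Let $\tilde H\sim\pi^*H-E$ be the strict transform of the hyperplane containing $Y$. A free curve $C$ moves off $\tilde H$, so $\tilde H\cdot C\ge 0$, that is, $E\cdot C\le \pi^*H\cdot C$. Hence $-K_X\cdot C\ge n\,\pi^*H\cdot C\ge n$.

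These degree-$1$ fibers also show that whatever characterization you quote must bound only curves through a general point. A hypothesis on all rational curves would exclude the blowups.

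\textbf{The comparison of invariants.} Your compatibility argument is one-sided, and the sentence meant to justify it is false. On $\IP^1\times\IP^{n-1}$, a line $\{pt\}\times\ell$ satisfies \eqref{eq-mrat} with $d=n$, yet $l_X=2$. So curves satisfying \eqref{eq-mrat} need not have minimal degree.

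Freeness of such curves gives only one inequality. The literature's invariant (minimal degree of rational curves through a general point, equivalently of a dominating family) is at most the paper's $l_X$. For the reverse inequality you need two facts: a dominating family of minimal degree is locally unsplit, and its general member satisfies \eqref{eq-mrat} (see \cite[Theorem IV.2.10]{K96}). This same fact is what justifies your ``hence'' in Part (1), where you pass from $l_X=n+1$ to the bound on all rational curves through a general point.
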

\begin{rmk}
    More generally, it is a natural and interesting question to classify $n$-dimensional uniruled manifolds whose minimal rational curves all have degree at least $n$. 
\end{rmk}
By using this classification result, we verify directly that if $l_X=n$ for a Fano manifold $X$, the $\beta$-volume of any ample $\IR$-line bundle is strictly less than $2n^n$ unless $X$ is itself the quadric hypersurface. 

\begin{prop} \label{prop:d=n}
    Assume that $n\geq 3$. Suppose $(X,\eta)$ is a twisted Fano pair and $X=\bl_{A}\IP^n$ is the blowup of $\IP^n$ along a complete intersection $A=H_1\cap S_d$ where $H_1 \in |\CO_{\IP^n}(1)|,\, S_d\in |\CO_{\IP^n}(d)|$ with $1\leq d \leq n$. Then for any ample $\IR$-line bundle $L$ on $X$, we have $\beta_{\eta}(X,L)^nL^n<2n^n$.
\end{prop}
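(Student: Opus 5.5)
We reduce to an explicit computation on $X=\bl_A\IP^n$, whose Picard group has rank two, generated by $H=\pi^*\CO_{\IP^n}(1)$ and the exceptional divisor $E_A$ over the codimension-two locus $A$. We use $\beta_\eta\le\beta_0:=\beta(X,L)$ (since $A_\eta\le A_X$ and $c_1(X,\eta)\le c_1(X)$ in the nef sense, both $s_\eta\le s_0$ and $\delta_\eta\le\delta_0$). It therefore suffices to treat $\eta=0$ and prove $\beta(X,L)^n L^n<2n^n$.

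Write $-K_X=(n+1)H-E_A$. The nef cone is spanned by $H$ and $H':=dH-E_A$, the pullback of the class defining the birational contraction or fibration given by the pencil-type system through $A$. Since $A\subset H_1$, the strict transform $\tilde H_1\sim H-E_A$ is effective, and $\mathrm{Eff}(X)$ is spanned by $E_A$ and $\tilde H_1$. Scaling so that $s(X,L)=1$, we write $L=aH+b(H-\tfrac1d E_A)$ with $a,b\ge0$ (or the equivalent parametrization). Nefness of $-K_X-L$ then gives linear constraints on $(a,b)$, and $L^n$ is an explicit polynomial, computed from $H^n=1$, $H^{n-2}E_A^2=-\deg A=-d$, and $H^{n-3}E_A^3$, $E_A^n$ via the Segre classes of $N_{A/\IP^n}=\CO(1)\oplus\CO(d)$.

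If $\delta\ge s$, then $\beta=1$ and we only need $\max L^n<2n^n$ over the polytope of nef $L$ with $-K_X-L$ nef. The maximum sits at $L=-K_X$, because $L^n$ is monotone along nef directions. So this case reduces to checking $(-K_X)^n<2n^n$, which is a direct computation: $(-K_X)^n=(n+1)^n-\sum$ correction terms depending on $d$. It should be compared with $Q^n$, where $(-K)^n=2n^n$. In general $\beta\le\min(s,\delta)$, so we need a valuative bound in the regime where $\delta$ is the binding constraint. Here we choose the divisor $\tilde H_1$ (or $E_A$) and compute $S(L;\tilde H_1)$ exactly, using Zariski decomposition on this rank-two Mori dream space: $\vol(L-x\tilde H_1)$ is a piecewise polynomial whose breakpoints come from the walls of the effective and nef cones. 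With $A_X(\tilde H_1)=1$, this gives $\delta\le 1/S(L;\tilde H_1)$, and we conclude by maximizing $\min(1,1/S)^n L^n$ over the one-parameter family of normalized $L$.

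The main obstacle is the elementary but messy optimization. We need to show that $\min\{1,\delta\}^nL^n<2n^n$ uniformly in $1\le d\le n$ and in the ray of $L$, with strictness. The plan is to reduce it to a one-variable inequality in the slope $t=b/a$ and compare against the bound $2n^n$ attained by $Q^n$. Where the divisor $\tilde H_1$ gives a weak bound, we switch to $E_A$ and take the better of the two. If the direct estimate fails near $d=n$, a fallback is the minimal-rational-curve argument of Proposition \ref{prop:phi} with $d=n$ (Case III), combined with the Seshadri-type rigidity of Proposition \ref{prop:e=vol}. This would show that equality forces $X\cong Q^n$, which contradicts $X=\bl_A\IP^n$.
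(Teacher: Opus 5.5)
\textbf{Gap 1: the reduction to $\eta=0$ is false.} You derive $\beta_\eta(X,L)\le\beta(X,L)$ from $s_\eta\le s$, that is, from ``$c_1(X,\eta)\le c_1(X)$ in the nef sense''. But $c_1(X)-c_1(X,\eta)=\frac{1}{2\pi}[\eta]$ is only pseudoeffective, so nefness of $c_1(X,\eta)-tL$ does not give nefness of $c_1(X)-tL$. Here is a counterexample with $d=1$, where the nef cone is spanned by $H$ and $H-E$. Take $\eta=2\pi c\{E\}$ with $0<c<1$; this is a twisted Fano pair because $c_1(X,\eta)=(1+c)(H-E)+(n-c)H$ is ample. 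Take $L=(H-E)+\epsilon H$. Then $-K_X-tL=(1-t)(H-E)+(n-t\epsilon)H$, while $c_1(X,\eta)-tL=(1+c-t)(H-E)+(n-c-t\epsilon)H$. Hence $s(X,L)=1<1+c=s_\eta(X,L)$ for small $\epsilon$. One also checks that $\delta_\eta(X,L)\to 2$ as $\epsilon\to 0$, since the moment polytope collapses to that of $\CO_{\IP^1}(1)$ on the base of $X\to\IP^1$. So in fact $\beta_\eta(X,L)>1\ge\beta(X,L)$, and a bound on $\beta(X,L)$ does not transfer to $\beta_\eta(X,L)$.

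What does hold is $\delta_\eta\le\delta$, together with the volume inequality $s_\eta(X,L)^nL^n\le c_1(X,\eta)^n\le(-K_X)^n$. This comes from nefness of $c_1(X,\eta)-s_\eta L$ and pseudoeffectivity of $[\eta]$. It is exactly the paper's argument for $2\le d\le n$, and there it is already the whole proof, since $(-K_X)^n=\frac{dn^n-(n+1-d)^n}{d-1}<2n^n$. No valuation is needed in that range.

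\textbf{Gap 2: the real difficulty is at $d=1$, not near $d=n$.} For $X=\bl_{\IP^{n-2}}\IP^n$ one has $(-K_X)^n=n^n+n\cdot n^{n-1}=2n^n$. So your branch ``$\delta\ge s$ reduces to checking $(-K_X)^n<2n^n$'' fails exactly in the one case where $\delta$ is indispensable. The missing input is that this $X$ is K-unstable, for instance $A_X(\tilde H_1)=1<\frac{5n+3}{4(n+1)}=S(-K_X;\tilde H_1)$. With that, equality throughout $\beta_\eta^nL^n\le s_\eta^nL^n\le c_1(X,\eta)^n\le(-K_X)^n=2n^n$ would force $[\eta]=0$, $L$ proportional to $-K_X$, and $\delta(X,-K_X)\ge 1$, which is a contradiction.

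The paper handles $d=1$ differently. Writing $L=(a+b)H-aE$, it computes $S(L;E)$ and $S(L;\tilde H)$ exactly and bounds $\beta_\eta\le\delta_\eta\le\min\{A(E)/S(L;E),A(\tilde H)/S(L;\tilde H)\}$, without normalizing $s_\eta$. It then maximizes the resulting one-variable bound at $a/b=\sqrt{2/(n(n+1))}$, obtaining $n^n(1+w)/w^n<2n^n$ with $w=\sqrt{2n/(n+1)}$. This is close to the optimization you sketch, but it is only needed for $d=1$.

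Finally, your fallback through Proposition \ref{prop:e=vol} is not available. That result concerns embedded minimal rational curves with trivial normal bundle, whereas the minimal rational curves on these blowups have anticanonical degree $n$.
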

\begin{proof}
   (\textbf{Case I}): $d=1$.  Let $\pi\colon X\rightarrow \IP^n$ be the blowup of $\IP^n$ along $A$ and let $E=\pi^{-1}(A)$ be the exceptional divisor. Let $H=\pi^*\CO_{\IP^n}(1)$ be the pullback of a hyperplane section of $\IP^n$. Let $\tilde{H}\sim H-E$ be the strict transform of the hyperplane section $H_1$. The nef cone is given by $\text{Nef}(X)=\IR_{\geq 0}[H]+\IR_{\geq 0}[H-E]$. For any ample $\IR$-line bundle $L$ on $X$, up to $\IR$-linear equivalence, we can assume $L=a(H-E)+bH=(a+b)H-aE$ where $a>0,\, b>0$. We will compute $S(L;E)$ and $S(L;\tilde{H})$ as follows: consider $L-tE=(a+b)H-(a+t)E$ and $L-tE$ is nef if and only if $0\leq t\leq b$. When $0\leq t\leq b$, 
    \begin{eqnarray*}
        \vol(L-tE)&=&\left((a+b)H-(a+t)E \right)^n=(a+b)^n-\sum_{k=2}^n \binom{n}{k}(a+b)^{n-k}(-a-t)^k(k-1)\\
        &=&(b-t)^n+n(a+t)(b-t)^{n-1}.
    \end{eqnarray*}
   Thus, the pseudo-effective threshold $T(L;E)=b$. Consequently, 
    $$
    S(L;E)=\frac{1}{L^n}\int_0^{T(L;E)} \vol(L-tE)\, \dif t=\frac{b\left(2b+(n+1)a\right)}{(n+1)(b+na)}.
    $$
    Next, consider $L-t\tilde{H}\sim (a+b-t)H-(a-t)E$. When $0\leq t \leq a$, divisor $L-t\tilde{H}$ is nef, yielding
    \begin{eqnarray*}
        \vol(L-t\tilde{H})&=&(L-t\tilde{H})^n=\left( (a+b-t)H-(a-t)E\right)^n\\
        &=& (a+b-t)^n -\sum_{k=2}^n \binom{n}{k}(a+b-t)^{n-k}(t-a)^k(k-1)\\
        &=&b^n -n(t-a)b^{n-1}.
    \end{eqnarray*}
    When $t>a$, the Zariski decomposition is given by $L-t\tilde{H}=(a+b-t)H+(t-a)E$, where the nef part is $P(t)=(a+b-t)H$ and the negative part is $N(t)=(t-a)E$, then 
    \begin{eqnarray*}
        \vol(L-t\tilde{H})=(a+b-t)^n H^n=(a+b-t)^n.
    \end{eqnarray*}
    It follows that
    \begin{eqnarray*}
        S(L;\tilde{H})&=&\frac{1}{L^n}\int_0^{T(L;\tilde{H})} \vol(L-t\tilde{H})\, \dif t \\
        &=& \frac{1}{L^n} \int_0^a \left(b^n-n(t-a)b^{n-1} \right)\, \dif t + \frac{1}{L^n} \int_a^{a+b} \left(a+b-t \right)^n\, \dif t\\
        &=& \frac{2(n+1)ab+n(n+1)a^2+2b^2}{2(n+1)(b+na)}.
    \end{eqnarray*}
    Hence, 
   \begin{eqnarray*}
        \delta_{\eta}(X,L)\leq \min\left\{\frac{A(E)}{S(L;E)},\frac{A(\tilde{H})}{S(L;\tilde{H})}\right\}=\min\left\{\frac{(n+1)(b+na)}{b(2b+(n+1)a)}, \frac{2(n+1)(b+na)}{2(n+1)ab+n(n+1)a^2+2b^2}\right\}.
    \end{eqnarray*}
    Setting $t=\frac{a}{b}\in (0,+\infty)$, for $n\geq 2$ we obtain
    \begin{eqnarray*}
        \beta_{\eta}(X,L)^nL^n&\leq& \delta_{\eta}(X;L)^nL^n\leq \min\left\{\frac{(n+1)^n(1+nt)^{n+1}}{(2+(n+1)t)^n},\frac{(2n+2)^n(1+nt)^{n+1}}{(n(n+1)t^2+2(n+1)t+2)^n}\right\}\\
        &\leq& \min\{h_1(t), h_2(t)\},
    \end{eqnarray*}
    where we denote $h_1(t)=\frac{(n+1)^n(1+nt)^{n+1}}{(2+(n+1)t)^n},\, h_2(t)=\frac{(2n+2)^n(1+nt)^{n+1}}{(n(n+1)t^2+2(n+1)t+2)^n}$. By a direct calculation, $h_1(t)$ is strictly increasing when $t>0$ and $h_2(t)$ is strictly decreasing when $t>0$ and $n\geq 2$. Therefore, the function $\min\{h_1(t), h_2(t)\}$ is maximized when $h_1(t)=h_2(t)$, that is, when $t=t_*\coloneqq \sqrt{\frac{2}{n(n+1)}}$. We denote $w=nt_*= \sqrt{\frac{2n}{n+1}}$. Thus, 
    \begin{eqnarray*}
        \beta_{\eta}(X,L)^nL^n \leq \frac{(1+w)^{n+1}}{(\frac{2}{n+1}+\frac{w}{n})^n}=\frac{(1+w)^{n+1}}{(\frac{w^2}{n}+\frac{w}{n})^n}=n^n\frac{1+w}{w^n}<2n^n.
    \end{eqnarray*}
    For the last inequality, we claim that $\frac{1+w}{w^n}<2$. Direct computation shows $w< \sqrt{2}$ for all $n\in \IN^*$ and the sequence $\{w^n\}$ is strictly increasing when $n\geq 3$, yielding $\frac{1+w}{w^n}<\frac{1+\sqrt{2}}{(\sqrt{\frac{3}{2}})^3}<2$, which verifies the claim.

    (\textbf{Case II}): $2\leq d \leq n$. In \cite[Lemma 4.13]{LM25}, it is computed that $(-K_X)^n=\frac{dn^n-(n+1-d)^n}{d-1}<2n^n$ when $d\geq 2$. Since $\beta_{\eta}(X,L)^nL^n$ is rescaling invariant, we may assume $\beta_{\eta}(X,L)=1$. Then in particular, $s_{\eta}(X,L)\geq 1$, so $D\coloneqq -K_X-[\eta]/2\pi-L$ is a nef class. Thus, using $[\eta]$ is pseudoeffective, 
    \begin{eqnarray*}
        L^n =(-K_X-[\eta]/2\pi-D)^n \leq (-K_X-[\eta]/2\pi)^n\leq (-K_X)^n<2n^n.
    \end{eqnarray*}
      
\end{proof}

\section{The proof of main Theorems}
Now we are ready to prove our main theorems. 

\begin{proof}[Proof of Theorem \ref{thm-twFano} and Theorem \ref{thm:rational}]
Let $(X, \eta)$ be a twisted Fano pair.
By the facts recalled at the beginning of Section \ref{sec-wtbl}, $X$ is uniruled and is covered by minimal rational curves.
Consider the invariant $l_X\coloneqq \min\{-K_X\cdot C \mid C\,\text{is minimal rational curve}\}$. 
Choose a general minimal rational curve 
$f: \IP^1\rightarrow X$
in a dominating locally unsplit family whose degree realizes $l_X. $
  If $l_X=2$, by Theorem \ref{thm:d=2} we have $\beta_\eta(X,L)^n\cdot L^n\leq 2n^n$. When $3\leq l_X \leq n-1$, it is proved in Theorem \ref{thm:middleDegree} that $\beta_\eta(X,L)^n\cdot L^n<2n^n$. For the remaining case, we have the length $l_X\geq n$. 

This proves the first part of theorem \ref{thm-twFano}. Now we assume furthermore that $X$ is a Fano manifold. 
The equality $\beta_\eta(X, L)^n \cdot L^n=2n^n$ can only occur when $l_X=2$ or $l_X=n$. 
When $l_X=2$, the statement follows from the equality case from Theorem \ref{thm:d=2}. 
By the result in Theorem \ref{thm-lX}, we have $l_X=n$ if and only if $X\cong Q^n$ or $X\cong \bl_{A}\IP^n$, where $A=H\cap S_d,\, H\in |\CO_{\IP^n}(1)|$ is a hyperplane section and $S_d\in |\CO_{\IP^n}(d)|$ is a degree $d$ hypersurface of $\IP^n$. By Proposition \ref{prop:d=n}, we have $\beta_\eta(X,L)^n\cdot L^n< 2n^n$ when $X=\bl_A \IP^n$. So when $l_X=n$, the equality holds only if $X\cong Q^n$, in which case $L$ is proportional to $-K_X$ (as $Q^n$ has Picard number one) so one must have $\eta=0$.     
\end{proof}


\begin{cor}
    Suppose $X$ is an $n$-dimensional smooth K-unstable Fano manifold, then 
     $$
     \delta(X,-K_X)^n\cdot(-K_X)^n <2n^n.
     $$
 \end{cor}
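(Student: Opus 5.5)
The plan is to apply Theorem \ref{thm:rational} with $L=-K_X$ and $\eta=0$, and to use K-instability to make the inequality strict. Since $X$ is K-unstable, $\delta(X,-K_X)<1$. On the other hand $s(X,-K_X)=1$, because $c_1(X)-t c_1(X)$ is nef exactly when $t\le 1$. Theorem \ref{thm:bbj} then gives
$$\beta(X,-K_X)=\min\{1,\delta(X,-K_X)\}=\delta(X,-K_X).$$
Hence $\delta(X,-K_X)^n\cdot(-K_X)^n=\beta(X,-K_X)^n\cdot\vol(-K_X)$, which is exactly the $\beta$-volume. Here only the statement $\beta=\min\{s,\delta\}$ is needed, and for the upper bound the easier direction already suffices.

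Next I would split into cases. If $X\cong\IP^n$, then $X$ is K-polystable, which contradicts the hypothesis, so $X\not\cong\IP^n$. Theorem \ref{thm:rational} then gives $\beta(X,-K_X)^n\cdot(-K_X)^n\le 2n^n$.

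If equality held, the equality clause of Theorem \ref{thm:rational} would force $X\cong Q^n$ or $X\cong\IP^1\times\IP^{n-1}$. Both of these carry Kähler--Einstein metrics, so they are K-polystable and in particular K-semistable, with $\delta\ge1$. This contradicts K-instability, so the inequality is strict.

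A more robust route to strictness is to observe that $\delta<1$ together with the bound $(-K_X)^n\le 2n^n$ already gives $\delta^n(-K_X)^n<(-K_X)^n\le 2n^n$. The bound $(-K_X)^n\le 2n^n$ follows from Theorem \ref{thm:rational} only when $\beta(X,-K_X)=1$, so it is not available here. Instead I would use the strict product $\delta^n\cdot(\cdot)$ directly through the equality characterization, as above. The only step requiring any care is the identification $\beta=\delta$, namely checking that $s(X,-K_X)=1$. This holds because $c_1(X)$ is ample and so not proportional to a non-nef class beyond $t=1$.
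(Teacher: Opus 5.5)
Your proof is correct and is essentially the paper's argument: K-instability gives $\beta(X,-K_X)=\min\{1,\delta(X,-K_X)\}=\delta(X,-K_X)$ and rules out $\IP^n$, Theorem \ref{thm:rational} then gives the bound $2n^n$, and equality is excluded because $Q^n$ and $\IP^1\times\IP^{n-1}$ are K-semistable. One correction to your side remark: turning the bound on $\beta$ into a bound on $\delta$ needs the harder inequality $\beta\ge\min\{s,\delta\}$ from Theorem \ref{thm:bbj}, not only the easier direction (you could avoid this by noting that the proof of Theorem \ref{thm:rational} actually bounds $\min\{s,\delta\}^n\cdot\vol(L)$ directly).
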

 \begin{proof}
     When we choose $L=-K_X$, we get $\beta(X,-K_X)=\min\{1,\delta(X,-K_X)\}=\delta(X,-K_X)$ since $X$ is K-unstable. Then by Theorem \ref{thm:rational} and the fact that $\IP^n,\, \IP^1\times \IP^{n-1},\, Q^n$ are K-semistable, we get  $\delta(X,-K_X)^n\cdot(-K_X)^n <2n^n$.
\end{proof}


\begin{thm}[=Theorem \ref{thm:gap}]
Let $(X^n,\omega)$ be an $n$-dimensional K\"ahler manifold with $\ric(\omega)\geq (n+1)\omega$. Assume $X\ncong \IP^n$, then we have 
$$
\vol(X,\omega)\leq\frac{2n^n}{(n+1)^n}\vol(\mathbb P^n,\omega_{FS}).
$$     
And the equality holds if and only if $(X,\omega)$ is biholomorphically isometric to $Q^n$ or $ \IP^1\times \IP^{n-1}$ equipped with their standard K\"ahler--Einstein metrics with Einstein constant $(n+1)$.
\end{thm}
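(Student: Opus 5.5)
We deduce the statement from Theorem \ref{thm:rational} with $\eta=0$. Since $\Ric(\omega)\ge (n+1)\omega>0$, $X$ is Fano, and $\frac{1}{2\pi}[\omega]$ is the numerical class of an ample $\IR$-line bundle $L$. By definition of the greatest Ricci lower bound, $\beta(X,L)\ge n+1$. We also have $\vol(X,\omega)=(2\pi)^n L^n$. Since $X\not\cong\IP^n$, Theorem \ref{thm:rational} gives
$$(n+1)^n L^n\le \beta(X,L)^n L^n\le 2n^n.$$
Hence
$$\vol(X,\omega)\le (2\pi)^n\frac{2n^n}{(n+1)^n}=\frac{2n^n}{(n+1)^n}\vol(\IP^n,\omega_{\fs}).$$

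For the equality case, assume $\vol(X,\omega)$ equals the bound. Then both inequalities above are equalities: $\beta(X,L)=n+1$ and $\beta(X,L)^nL^n=2n^n$. The equality part of Theorem \ref{thm:rational} forces $X\cong Q^n$ or $X\cong\IP^1\times\IP^{n-1}$. It remains to identify the class and the metric. By Theorem \ref{thm:bbj}, $\beta(X,L)\le s(X,L)$, so $D:=c_1(X)-(n+1)c_1(L)$ is nef. On the other hand, $(-K_X)^n=2n^n=((n+1)L)^n$ for both candidates. Write $-K_X=(n+1)L+D$ and expand
$$(-K_X)^n=\sum_k\binom nk ((n+1)L)^{n-k}\cdot D^k.$$
Every term is nonnegative, since $L$ is ample and $D$ is nef, and the $k=0$ term already equals $(-K_X)^n$. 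So $L^{n-1}\cdot D=0$. With $L$ ample and $D$ nef, this gives $D\equiv 0$ via the Hodge index theorem (or Khovanskii--Teissier). Thus $[\omega]=\frac{2\pi}{n+1}c_1(X)$.

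Now $\omega\in \frac{2\pi}{n+1}c_1(X)$ satisfies $\Ric(\omega)\ge(n+1)\omega$, and $\Ric(\omega)-(n+1)\omega$ is a semipositive form in the zero class $2\pi c_1(X)-(n+1)[\omega]=0$. Integrating it against $\omega^{n-1}$ gives $0$, so the form vanishes identically and $\Ric(\omega)=(n+1)\omega$. Hence $\omega$ is Kähler--Einstein. By Bando--Mabuchi uniqueness, $\omega$ is the pullback of the standard symmetric Kähler--Einstein metric under an automorphism. So $(X,\omega)$ is biholomorphically isometric to the standard model.

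Conversely, the standard Kähler--Einstein metrics on $Q^n$ and $\IP^1\times\IP^{n-1}$ with Einstein constant $n+1$ have volume $(2\pi)^n\frac{(-K_X)^n}{(n+1)^n}$. Since $(-K_X)^n=2n^n$ for both (for $Q^n$ this is $2\cdot n^n$, from $-K=nH$ and $H^n=2$), the bound is attained.

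The only step needing care is the numerical argument that $D\equiv0$. Since $X$ is one of two explicit varieties, it could alternatively be checked by hand: $Q^n$ has Picard number one, and on $\IP^1\times\IP^{n-1}$ the nef cone is generated by the two pullbacks of hyperplane classes. Everything else is a direct assembly of earlier results.
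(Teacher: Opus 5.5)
Your proposal is correct and follows the paper's proof. You apply Theorem~\ref{thm:rational} using $\beta(X,L)\ge n+1$. In the equality case you identify $[\omega]=\frac{2\pi}{n+1}c_1(X)$, deduce $\Ric(\omega)=(n+1)\omega$ because the semipositive form $\Ric(\omega)-(n+1)\omega$ lies in the zero class, and conclude by Bando--Mabuchi.

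The only difference is how you identify the Kähler class. You do it uniformly, by expanding $((n+1)L+D)^n$ and applying the Hodge index theorem to $L^{n-1}\cdot D=0$. The paper instead argues case by case: Picard number one for $Q^n$, and the bounds $a\le 2$, $b\le n$ on $\IP^1\times\IP^{n-1}$. You also verify the converse direction, which the paper leaves implicit.
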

\begin{proof}
    Let $L$ be the ample $\mathbb R$-line bundle corresponding to the class $\frac{1}{2\pi}[\omega]$, then $\beta(X,L)\geq n+1$. Then by Theorem \ref{thm:rational}, we get $\vol(L)\leq \frac{2n^n}{(n+1)^n}$. Thus, 
    \begin{eqnarray*}
       \vol(X,\omega)= \int_X \omega^n = (2\pi)^n \cdot \vol(L)\leq (2\pi)^n\frac{2n^n}{(n+1)^n}=\frac{2n^n}{(n+1)^n}\vol(\mathbb P^n,\omega_{FS}).
    \end{eqnarray*}
    By Theorem \ref{thm:rational} the equality holds only if $X\cong Q^n$ or $X\cong \IP^1\times \IP^{n-1}$. 
    
    It remains to show that in the equality case $(X,\omega)$ is actually isometric to $Q^n$ or $ \IP^1\times \IP^{n-1}$ equipped with their K\"ahler--Einstein metrics.

    \textbf{Case 1:} $X= Q^n.$

Consider $\tilde \omega:=(n+1)\omega$ and $\tilde \xi:=\frac{1}{2\pi}[\tilde \omega]$. Note that $\tilde \xi^n=2n^n=c_1(Q^n)^n$. Using that $Q^n$ has Picard number one when $n\geq 3$, we deduce that $\tilde \xi=c_1(Q^n)$. On the other hand, one has $\Ric(\tilde \omega)\geq \tilde \omega$, so the cohomological relation forces that $\Ric(\tilde \omega)=\tilde\omega$. Namely, $\tilde \omega$ is a K\"ahler--Einstein metric. By uniqueness of such metrics \cite{BM87} we conclude.

    \textbf{Case 2:} $X= \IP^1\times \IP^{n-1}.$

Consider again $\tilde \omega:=(n+1)\omega$ and $\tilde \xi:=\frac{1}{2\pi}[\tilde \omega]$.
    Let $p_1: X\to \IP^1$ and $p_2: X\to \IP^{n-1}$ be the natural projection maps. Set $H_1:=p_1^*\mathcal{O}_{\IP^1}(1)$ and $H_2:=p_2^*\mathcal{O}_{\IP^{n-1}}(1)$. Then $\tilde \xi$ is of the form $\tilde \xi=a H_1+b H_2$ for some $a>0$ and $b>0$, so that $\tilde \xi^n=nab^{n-1}.$ Note that $c_1(X)=2H_1+nH_2$ and $\Ric(\tilde \omega)\geq \tilde \omega$. Thus $(2H_1+nH_2)-(a H_1+b H_2)$ is nef, which implies that $a\leq 2$ and $b\leq n$. Then the volume equality $\tilde \xi^n=2n^n$ forces that $a=2$ and $b=n$, so that $[\Ric(\tilde \omega)]=[\tilde \omega]$. This further ensure that $\Ric(\tilde \omega)= \tilde \omega$ and we conclude by \cite{BM87} again.
    
\end{proof}
\section{Toric log Fano pairs}
In this section, we resolve Andreasson-Berman's "logarithmic gap hypothesis" (\cite{AB25}). The log pair $(X,\D)$ is a toric log Fano pair if $X$ is a normal toric variety and $\D$ is an effective $\IT$-invariant $\IQ$-divisor, and $(X,\D)$ is klt with $-K_X-\D$ is ample $\IQ$-Cartier.
\begin{thm}\label{thm:toricSmooth}
    Suppose $(X,\D)$ is a K-semistable toric log Fano pair, and $X$ is smooth. If $X\ncong \IP^n$, then $(-K_X-\D)^n\leq (-K_{\IP^1\times \IP^{n-1}})^n=2n^n$. The equality holds if and only if $X\cong \IP^1\times \IP^{n-1}$ and $\Delta=0$. 
\end{thm}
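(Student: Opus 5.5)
The plan is to reduce Theorem \ref{thm:toricSmooth} to the twisted statement, Theorem \ref{thm-twFano}, taking $\eta=\eta_\Delta=2\pi\{\Delta\}$ and $L=-K_X-\Delta$. Since $(X,\Delta)$ is a log Fano pair with $X$ smooth, Example \ref{ex-con} makes $(X,\eta_\Delta)$ a twisted Fano pair. Also $s_\eta(X,L)=1$ trivially, because $c_1(X,\eta)=c_1(L)$.

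K-semistability of $(X,\Delta)$ means $\delta_\eta(X,-K_X-\Delta)\ge 1$, hence $\beta_\eta(X,L)=1$ by Theorem \ref{thm:bbj}. It therefore suffices to bound $(-K_X-\Delta)^n=\beta_\eta(X,L)^n\cdot\vol(L)$. The one thing to be careful about is that the log $\delta$-invariant coincides with the twisted $\delta_\eta$, which holds since $\frac{1}{2\pi}\nu(\eta_\Delta,E)=\ord_E(\Delta)$.

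Next, I need $X$ itself to be Fano in order to use the second half of Theorem \ref{thm-twFano}. A smooth toric variety with $-K_X-\Delta$ ample and $\Delta\ge 0$ torus-invariant need not be Fano, so I would not rely on that. Instead I would use the toric structure directly, via the route flagged in the introduction: by \cite{CFH14}, every smooth complete toric variety $X\ncong\IP^n$ contains a torus-invariant embedded $\IP^k$ ($k\ge 1$) with trivial normal bundle. When $k=1$ this is a rational curve $C$ with $N_{C/X}\cong\CO^{\oplus(n-1)}$, so $-K_X\cdot C=2$. The proof of Proposition \ref{prop:trivialNormal} never used minimality; it only used the normal bundle splitting to feed Proposition \ref{prop:phi} Case I together with $p=L\cdot C\le d=2$. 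Here that bound holds because $L\cdot C=(-K_X-\Delta)\cdot C\le -K_X\cdot C=2$: since $C$ moves in a family covering a divisor (its normal bundle is trivial) and $\Delta$ is torus-invariant, we may replace $C$ by a translate not contained in $\Supp\Delta$. The same blow-up computation, using $A_\Delta(E)\le n-1$, then gives $(-K_X-\Delta)^n\le 2n^n$.

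For general $k$, I would blow up $P=\IP^k$ with trivial normal bundle, so $A_X(E)=n-k$ and $L|_P=\CO(q)$ with $q\le k+1$ (again by moving $P$ off $\Delta$). The analogue of Case I gives $\vol(\pi^*L-xE)\ge L^n-\binom{n}{k}q^kx^{n-k}$, and optimizing $A/S$ yields $L^n\le\binom{n}{k}(k+1)^k n^{n-k}\cdot(\text{const})$. One then checks that this constant is at most $2n^n$, with equality only for $k=1$. I expect this elementary but fiddly inequality in $k$ to be the main computational step. Alternatively, one can reduce to $k=1$ by taking a line inside $P$, whose normal bundle is $\CO(1)^{k-1}\oplus\CO^{n-k}$, and apply Case III of Proposition \ref{prop:phi} together with \cite[Proposition 4.8]{LM25}, which give a strict inequality when $d=k+1\ge 3$ and $d\le n-1$.

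For the equality case, equality forces $k=1$, $p=2$ and $\Delta\cdot C=0$ for the moved curve. The Okounkov-body argument of Proposition \ref{prop:e=vol}, followed verbatim by the completion of Theorem \ref{thm:d=2}, gives $\epsilon(-K_X-\Delta;C)\ge n$. Proposition \ref{prop:seshadri-for-log-fano} then yields $X\cong\IP^1\times\IP^{n-1}$. Once $X$ is Fano, the strict-monotonicity argument $f'(t)<0$ from that proof shows $\Delta=0$. Conversely, $\IP^1\times\IP^{n-1}$ with $\Delta=0$ is K-semistable and has volume $2n^n$. The main obstacle is the bookkeeping needed to avoid assuming $X$ is Fano and to control $L\cdot C$ through $\Delta$.
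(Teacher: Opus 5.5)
Your overall strategy matches the paper's. You take the $\IP^k$ with trivial normal bundle from \cite{CFH14}, blow it up, and use the Case I type estimate $\vol(\pi^*L-xE)\ge L^n-\binom{n}{k}q^kx^{n-k}$. In the equality case you pass to a Seshadri constant bound and Proposition \ref{prop:seshadri-for-log-fano}. The inequality part is fine once the constant is computed correctly. With $A_X(E)=n-k$ and $q\le k+1$ one gets $L^n\le\binom{n}{k}(k+1)^k(n-k+1)^{n-k}=(-K_{\IP^k\times\IP^{n-k}})^n$, rather than your garbled $\binom{n}{k}(k+1)^kn^{n-k}\cdot(\text{const})$. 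This is at most $2n^n$ by \cite[Lemma 3.4]{LM25}.

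\textbf{The gap is in the equality case.} The bound above is symmetric under $k\leftrightarrow n-k$. It equals $2n^n$ for $k=n-1$ as well as for $k=1$, so your claim that equality forces $k=1$ is false.

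The case $k=n-1$ genuinely occurs with no $\IP^1$ of trivial normal bundle available. For instance, $X=\mathrm{Bl}_{\IP^{n-2}}\IP^n$ is a toric $\IP^{n-1}$-bundle over $\IP^1$ with $l_X=n$ by Theorem \ref{thm-lX}. A curve with trivial normal bundle would be a minimal rational curve of degree $2$, so $X$ has none, and the only available $P$ is a fiber $\IP^{n-1}$.

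Your fallback of taking a line $\ell\subset P$ does not rescue this. Such a line has $d=k+1=n$, which is outside the range $3\le d\le n-1$ of Theorem \ref{thm:middleDegree}. Moreover, no strict bound below $2n^n$ can come from $\phi_n$ alone, since $Q^n$ has $l_X=n$ and volume exactly $2n^n$.

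What is missing is the codimension-one analogue of your $k=1$ argument, which the paper handles by running the blow-up argument uniformly in the codimension $r$. Take $Z=P\cong\IP^{n-1}$, so $r=1$, $\pi=\mathrm{id}$ and $E=Z$.
\begin{itemize}
\item Equality forces $((-K_X-\D)|_Z)^{n-1}=n^{n-1}$, hence $\D|_Z=0$.
\item It also forces $\vol(L-xZ)=(L-xZ)^n$ for $x\in[0,2]$.
\item Then \cite[Lemma 3.2]{LM25} gives $\epsilon(-K_X-\D;Z)\ge 2=r+1$.
\item Proposition \ref{prop:seshadri-for-log-fano} in its Case 2 ($Z$ a general fiber of a smooth fibration over a curve) gives $X\cong\IP^1\times\IP^{n-1}$.
\end{itemize}

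A smaller point: the subvariety from \cite{CFH14} is the fiber of $U\cong\IP^{k}\times(\IC^*)^{n-k}\to(\IC^*)^{n-k}$ through the identity of the torus. It is not torus-invariant, and torus translates of an invariant subvariety would not move it anyway. Passing through the identity is what guarantees $Z\not\subseteq\Supp(\D)$, which Proposition \ref{prop:seshadri-for-log-fano} requires.
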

\begin{proof}
    By \cite[Corollary 2.6]{CFH14},
    there exists an open dense subset $U$ of $X$ that is isomorphic to $\IP^{p+1}\times (\IC^*)^{n-p-1}$ as toric varieties with $p+1\in \{1,2,\cdots, n\}$. 
    In particular, any fiber $Z\cong \IP^{p+1}$ of the projection $U\rightarrow (\IC^*)^{n-p-1}$ has a trivial normal bundle $N_{Z/X}=\CO_Z^{\oplus r}$ where $r=n-1-p$. Since we assume $X\ncong \IP^n$, then $r\in \{1,2,\cdots, n-1\}$. By the similar argument with \cite[Proposition 3.1]{LM25}, we have 
    \begin{eqnarray*}
        \vol_{\hat{X}}(\pi^*L-xE) &\geq&  L^n - \mathop{\limsup}_{k\rightarrow +\infty}\sum_{j=0}^{xk-1} \frac{n!}{k^n} h^0(Z,kL|_Z\otimes \CI_Z^{j}/\CI_Z^{j+1})\\
        &=& L^n-\binom{n}{r}(L|_Z)^{n-r}\cdot x^r.
    \end{eqnarray*}
    Then,
    \begin{eqnarray*}
        S(L;E)=\frac{1}{L^n}\int_0^{+\infty} \vol_{\hat{X}}(\pi^*L-xE)\, \dif x \geq \frac{1}{L^n}\int_0^{\varepsilon} \left(L^n-\binom{n}{r}(L|_Z)^{n-r}\cdot x^r\right)\, \dif x=\frac{r}{r+1}\varepsilon,
    \end{eqnarray*}
    where $\varepsilon=(\frac{L^n}{\binom{n}{r}\cdot (L|_Z)^{n-r}})^{1/r}$. Then, 
    \begin{eqnarray*}
        \delta(X,L) \leq \frac{A_{(X,\D)}(E)}{S(L;E)}\leq \frac{r}{\frac{r}{r+1}\varepsilon}=\frac{r+1}{\varepsilon}.
    \end{eqnarray*}
    This implies 
    \begin{eqnarray*}
        \delta(X,L)^r\cdot L^n \leq \binom{n}{r}(r+1)^r (L|_Z)^{n-r}.
    \end{eqnarray*}
    Recall that $(X,\D)$ is K-semistable implies $\delta(X,L)\geq 1$ and $L=-K_X-\D$, we get
    \begin{eqnarray} \label{ineq:Z}
        (-K_X-\D)^n \leq \binom{n}{r}(r+1)^r ((-K_X-\D)|_Z)^{n-r}.
    \end{eqnarray}
    We claim that $Z \nsubseteq \Supp(\D)$. By the construction of \cite[Corollary 2.5]{CFH14},  $Z$ passes through the identity $e\in \IT=(\IC^*)^n$ while $\Supp(\D)\cap \IT =\emptyset$, which shows the claim. Furthermore, $(-K_X)|_Z=-K_Z$ since $N_{Z/X}$ is trivial and $\D|_Z$ is an effective $\IQ$-divisor on $Z$, so 
    \begin{eqnarray*}
        \vol_Z((-K_X-\D)|_Z)=(-K_Z-\D|_Z)^{n-r}\leq (-K_Z)^{n-r}.
    \end{eqnarray*}
    Following the inequality (\ref{ineq:Z}), we get
    \begin{eqnarray*}
        (-K_X-\D)^n \leq \binom{n}{r}(r+1)^r ((-K_X-\D)|_Z)^{n-r} \leq  \binom{n}{r}(r+1)^r (-K_Z)^{n-r}=(-K_{\IP^r\times \IP^{n-r}})^n.
    \end{eqnarray*}
    By \cite[Lemma 3.4]{LM25}, $c_r=(-K_{\IP^r\times \IP^{n-r}})^n=\binom{n}{r}(r+1)^r (n-r+1)^{n-r}$ attains maximum when $r=n-1$ or $r=1$. Thus, $(-K_X-\D)^n \leq (-K_{\IP^1\times \IP^{n-1}})^n = 2n^n$.

 When the equality $(-K_X-\D)^n=2n^n$ holds, we conclude that $r=n-1$ or $r=1$ and 
 $\vol_{\hat{X}}(\pi^*L-xE)=(\pi^*L-xE)^n$ when $x\in [0, \epsilon]$ where 
 $$\varepsilon=\left(\frac{(-K_{\IP^r\times \IP^{n-r}})^n}{\binom{n}{r}(-K_{\IP^{n-r}})^{n-r}}\right)^{\frac{1}{r}}=r+1.$$
 So by \cite[Lemma 3.2]{LM25} the Seshadri constant $\epsilon(-K_X-\Delta; Z)\ge r+1$ which is greater than the codimension of $Z$. Note that $Z$ is isomorphic to $\IP^1$ if $r=n-1$ and to $\IP^{n-1}$ if $r=1$. 
 In both cases, by Proposition \ref{prop:seshadri-for-log-fano}, $X$ is biholomorphic to $\IP^1\times \IP^{n-1}$. Finally when $X\cong \IP^1\times \IP^{n-1}$ and $\Delta\neq 0$, it is easy to see that $\vol(-K_X-\Delta)=(-K_X-\Delta)^n$ is strictly less than $2n^n=\vol(-K_X)$. So we get the last statement. 
 
\end{proof}
\begin{prop}\label{prop:toricSingular}
    Suppose $(X,\D)$ is a K-semistable toric log Fano pair such that $X$ is a singular toric variety, then $(-K_X-\D)^n<2n^n$.
\end{prop}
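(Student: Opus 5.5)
The plan is to reduce to a volume gap for singular K-semistable toric pairs, using the logarithmic Moraga--S\"uss estimate announced in the introduction (Appendix \ref{app-vol}). The underlying principle is that a singular point of a K-semistable log Fano pair forces the anticanonical volume down. Concretely:
\begin{enumerate}
\item By the local-to-global volume comparison of Liu and the logarithmic version in the appendix, for any closed point $x\in X$,
\begin{equation*}
(-K_X-\D)^n\le \left(\frac{n+1}{n}\right)^n \hvol(x,X,\D).
\end{equation*}
\item Since $\D$ is effective, $\hvol(x,X,\D)\le \hvol(x,X)$.
\item Choose $x$ to be a $\IT$-fixed singular point of $X$. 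It exists because the singular locus is a nonempty, closed, $\IT$-invariant union of orbit closures, so it contains a torus-fixed point. Its germ is an affine toric singularity $U_\sigma$.
\end{enumerate}

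Next I will bound $\hvol$ of a singular toric germ. For a toric singularity $\hvol(x,X)$ is the minimum, over $\xi$ in the interior of $\sigma$, of $n!\,\vol(\sigma^\vee\cap\{\langle \xi,\cdot\rangle\le 1\})$ times the appropriate normalization coming from $A(\xi)$. From this the key fact is
\begin{equation*}
\hvol(x,X)\le \tfrac{1}{2}\,n^n \quad \text{if } x \text{ is singular}.
\end{equation*}
For $\IQ$-Gorenstein quotient singularities this follows from $\hvol(x,X)\le n^n/|G|$ with $|G|\ge2$. For general non-simplicial cones I will use the bound $\hvol(x,X)\le 2(n-1)^{n-1}$ of Liu--Xu type for non-smooth points, or the Moraga--S\"uss argument: a non-regular cone contains an extra primitive generator, and one compares with a $\IQ$-factorialization or a subcone. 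Once this is in place,
\begin{equation*}
(-K_X-\D)^n\le \left(\frac{n+1}{n}\right)^n \cdot \hvol(x,X),
\end{equation*}
and it remains to check that the right side is $<2n^n$.

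Here is the main obstacle. The crude bound $\hvol\le n^n/2$ gives $(n+1)^n/2$, and this is not $<2n^n$ for large $n$, since $(1+1/n)^n\to e>4$ fails only barely: in fact $(1+1/n)^n<e<4$. So $(n+1)^n/2=\tfrac12(1+1/n)^n n^n<\tfrac e2 n^n<2n^n$, and it does work. The real difficulty is justifying $\hvol(x,X)\le n^n/2$ for every singular toric germ, including the non-$\IQ$-factorial ones. I would try first to pass to a small $\IQ$-factorial modification, which does not increase $\hvol$ at some point over $x$. This reduces to simplicial cones, i.e.\ quotients $\IC^n/G$ with $G$ abelian and $|G|\ge 2$ (the non-smooth simplicial case has nontrivial lattice index), where $\hvol=n^n/|G|$ is standard. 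Alternatively, I will invoke the logarithmic Moraga--S\"uss gap from Appendix \ref{app-vol} directly, which is presumably stated to give exactly $(-K_X-\D)^n\le \tfrac12 (n+1)^n$ for singular K-semistable toric pairs. The strict inequality then follows from $(1+1/n)^n<4$.
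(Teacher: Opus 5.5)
\textbf{There is a genuine gap.} Your outline is the paper's: Liu's local-to-global comparison at a torus-fixed singular point, followed by a toric ODP-type bound on the local volume. But the local estimate you rely on is false.

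\emph{The bound $\hvol(x,X)\le\frac12 n^n$ fails for non-simplicial singular toric germs.} The $3$-fold ordinary double point $\{xy=zw\}$ is toric: it is the cone over the unit square, generated by $e_1,e_2,e_1+e_3,e_2+e_3$. It has $\hvol=16>\frac{27}{2}$. Its product with $\IC^{n-3}$ has $\hvol=\frac{16}{27}n^n>\frac12 n^n$, so the bound fails for every $n\ge 3$.

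\emph{Your reduction to the simplicial case runs in the wrong direction.} Suppose $Y\to X$ is crepant and $y\mapsto x$. Every valuation centred at $y$ is centred at $x$ with the same log discrepancy and no larger volume on $X$. Hence $\hvol(x,X)\le\hvol(y,Y)$. Concretely, the small resolution of the conifold is smooth, with $\hvol=27$ at every point over $x$. So the formula $n^n/|G|$ for quotient singularities says nothing about the non-$\IQ$-factorial case.

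\emph{Two further misstatements.} The bound you attribute to Liu--Xu is misquoted: the ODP value is $2(n-1)^n$, not $2(n-1)^{n-1}$. Your guess that the appendix yields $(-K_X-\D)^n\le\frac12(n+1)^n$ is also incorrect.

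\emph{What the paper actually uses.} The input is Proposition \ref{prop:ODP}: $\hvol(x;X,\D)\le 2(n-1)^n$ at a non-regular toric point. From this,
\begin{equation*}
(-K_X-\D)^n\le\left(\frac{n+1}{n}\right)^n 2(n-1)^n=2n^n\left(1-\frac{1}{n^2}\right)^n<2n^n .
\end{equation*}
This local bound is weaker than your claimed $\frac12 n^n$ for all $n\ge 3$, but it is true.

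\emph{Your step 2 also needs care.} The inequality $\hvol(x,X,\D)\le\hvol(x,X)$ presupposes that $X$ itself is $\IQ$-Gorenstein. The hypotheses only make $K_X+\D$ $\IQ$-Cartier, and a toric $X$ need not be $\IQ$-Gorenstein. This is exactly why the paper proves the Moraga--S\"{u}ss estimate directly for the pair. It rescales $v_\rho\mapsto v_\rho/(1-c_\rho)$ so the generators lie on $\langle u,\cdot\rangle=1$. It then uses $\mathrm{conv}(0,v_\rho)\subset\mathrm{conv}(0,v'_\rho)$ to keep the lattice lower bound on $\vol(P')$. If $X$ happens to be $\IQ$-Gorenstein, your step 2 combined with the original Moraga--S\"{u}ss bound $2(n-1)^n$ (not $\frac12 n^n$) would suffice.
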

\begin{proof}
    We pick a $\IT$-invariant non-smooth point $x\in X$, then $(X,x)$ is a toric singularity. By Liu's local-to-global volume comparison in the log setting (\cite[Proposition 4.6]{LL19}, see also the definition of local volume $\hvol(x;X,\D)$ within the reference) and by logarithmic version of toric ODP (Proposition \ref{prop:ODP}),
    \begin{eqnarray*}
        (-K_X-\D)^n \leq \left(\frac{n+1}{n} \right)^n\cdot \hvol(x;X,\D)\le\left( \frac{n+1}{n}\right)^n \cdot 2(n-1)^n<2n^n.
    \end{eqnarray*}
\end{proof}
\begin{proof}[Proof of Theorem \ref{thm:toricPair}]
    Directly follows from Theorem \ref{thm:toricSmooth} and Proposition \ref{prop:toricSingular}.
\end{proof}
\appendix

\section{local volume of toric log singularities}\label{app-vol}
In this appendix, we prove the following logarithmic version of Moraga-S\"{u}ss' estimate for toric singularities.  
\begin{prop}\label{prop:ODP}
    Let $x\in (X, \Delta)$ be an $n$-dimensional $\IQ$-Gorenstein toric klt log singularity. Assume that $x$ is not a regular point. 
    Then $\hvol(x; X, \Delta)\le 2(n-1)^n$. 
\end{prop}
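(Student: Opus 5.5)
The plan is to reduce to a convex-geometric estimate on the cone of $x$, using the toric description of the local volume. Write $X=X_\sigma$ locally at $x$, with $\sigma\subset N_\IR$ a strongly convex rational cone with primitive ray generators $v_1,\dots,v_k$. Write $\Delta=\sum_i a_iD_i$ with $a_i\in[0,1)$ by the klt condition. The $\IQ$-Gorenstein condition on the pair gives $u_\Delta\in M_\IQ$ with $\langle u_\Delta,v_i\rangle=1-a_i$. The log discrepancy of the toric valuation $\xi\in\mathrm{int}(\sigma)$ is then the linear function $A_{(X,\Delta)}(\xi)=\langle u_\Delta,\xi\rangle$. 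By the standard toric description of the normalized volume, restricting to monomial valuations and using that the minimizer of $\hvol$ is torus invariant, we have
$$\hvol(x;X,\Delta)=\inf_{\xi\in\mathrm{int}(\sigma)} n!\,A_{(X,\Delta)}(\xi)^n\,\vol_M\bigl(\{m\in\sigma^\vee:\langle m,\xi\rangle\le 1\}\bigr).$$
Only the upper bound $\le$ is needed, which is immediate because $\hvol$ is an infimum over all valuations. If $\sigma$ is not full dimensional, $X\cong X_{\sigma'}\times(\IC^*)^{k}$ near $x$, but $x$ is a torus-fixed point, so we may assume $\sigma$ is full dimensional. The key observation is that $A_{(X,\Delta)}(v_i)\le 1$ for every ray and $A_{(X,\Delta)}$ is linear, so $A_{(X,\Delta)}(\sum c_iv_i)\le\sum c_i$ for $c_i\ge0$. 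This is the only place the boundary enters, and it lets us argue without assuming $X$ itself is $\IQ$-Gorenstein.

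\emph{Simplicial step.} Suppose $\tau\subseteq\sigma$ is a full-dimensional simplicial subcone spanned by $n$ of the rays $v_{i_1},\dots,v_{i_n}$, with multiplicity $\mathrm{mult}(\tau)=[N:\oplus\IZ v_{i_j}]$. For $\xi=\sum c_jv_{i_j}\in\mathrm{int}(\tau)\subseteq\mathrm{int}(\sigma)$, the inclusion $\sigma^\vee\subseteq\tau^\vee$ gives
$$n!\,\vol_M(\sigma^\vee\cap\{\xi\le1\})\le n!\,\vol_M(\tau^\vee\cap\{\xi\le1\})=\frac{1}{\mathrm{mult}(\tau)\prod_jc_j}.$$
Taking all $c_j=1$ then yields $\hvol(x;X,\Delta)\le n^n/\mathrm{mult}(\tau)$. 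If some such $\tau$ has $\mathrm{mult}(\tau)\ge2$, we get $\hvol\le n^n/2$. One checks $n^n/2\le 2(n-1)^n$ for all $n\ge2$, since $(n/(n-1))^n\le4$ with equality only at $n=2$. In particular this settles every non-regular simplicial $\sigma$.

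\emph{Remaining case (main obstacle).} Here $\sigma$ is non-simplicial, so $k\ge n+1$, and every full-dimensional simplicial subcone spanned by rays of $\sigma$ is unimodular. I would first pick $n+1$ rays spanning a full-dimensional non-simplicial subcone $\sigma_0\subseteq\sigma$. It suffices to treat $\sigma_0$: since $\sigma^\vee\subseteq\sigma_0^\vee$ and the linear bound on $A$ still holds, the same inequality applies with $\xi\in\mathrm{int}(\sigma_0)$. Unimodularity of all $n$-subsets forces the unique linear relation among these $n+1$ vectors to be a circuit with coefficients $\pm1$. After relabeling and a lattice change, this is $v_1+v_2=v_3+v_4$ with $v_3,\dots,v_{n+1}$ a basis, so $\sigma_0$ is the cone of the ordinary double point $\{xy=zw\}$ times $\IC^{n-3}$. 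I would then compute directly with $\xi=\sum_{j\ge3}v_j$ (symmetrized appropriately, e.g.\ $\xi=\tfrac12(v_1+v_2+v_3+v_4)+\sum_{j\ge5}v_j$). This gives $A(\xi)\le n-1$, and $\sigma_0^\vee\cap\{\xi\le1\}$ has normalized volume $2/(n-1)^{n}\cdot(n-1)^{n}$-type size. The resulting bound is exactly the Moraga--S\"uss value $2(n-1)^n$ for the $A_1$-singularity times $\IC^{n-3}$. The delicate points I expect are verifying the circuit claim and optimizing $\xi$ so that the constant comes out exactly $2(n-1)^n$ rather than something weaker. If the direct choice of $\xi$ falls short, I would instead minimize $A(\xi)^n\vol(\cdots)$ over $\xi$ in the $S_2\times S_2$-symmetric slice, where it reduces to a one-variable calculus problem.
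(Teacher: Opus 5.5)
The parts before the non-simplicial case are correct, and your route differs from the paper's. The paper rescales the rays to $v'_\rho=v_\rho/(1-c_\rho)$, writes $\hvol$ through the Santal\'o point of $P'$, and bounds it with Mahler-volume estimates: the exact value for simplices, and the Alexander--Fradelizi--Zvavitch bound for $(p,q)$ Radon partitions. You instead plug explicit monomial valuations into $\hvol$ and use $A_{(X,\Delta)}(\sum c_jv_j)\le\sum c_j$. The following steps all check out:
\begin{itemize}
\item the upper-bound formula;
\item the simplicial step, giving $n^n/\mathrm{mult}(\tau)\le n^n/2\le 2(n-1)^n$;
\item the passage to $n+1$ rays;
\item the Cramer's-rule argument that the relation has coefficients in $\{0,\pm1\}$.
\end{itemize}
In the $v_1+v_2=v_3+v_4$ case your $\xi$ also works. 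The normalized volume of $\sigma_0^\vee\cap\{\xi\le 1\}$ is exactly $2$ and $A(\xi)\le n-1$, which gives $2(n-1)^n$. Note that this is not the volume of $A_1\times\IC^{n-3}$, which is $16n^n/27$. Also, ``$v_3,\dots,v_{n+1}$ a basis'' should read $v_2,\dots,v_{n+1}$.

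The gap is the claimed normal form. The $\pm1$ relation has the form $\sum_{i\in I^+}v_i=\sum_{j\in I^-}v_j$ with $p=|I^+|\ge 2$ and $q=|I^-|\ge 2$, since extremality of the rays rules out size $1$. Nothing forces $p=q=2$. With a boundary, nothing even forces $p=q$, because $\IQ$-Gorensteinness of the pair only gives $\sum_{I^+}(1-a_i)=\sum_{I^-}(1-a_j)$.

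For a concrete example, work in $\IZ^4$ with $v_i=e_i$ for $i\le 4$, $v_5=e_1+e_2+e_3-e_4$, and $\Delta=\frac13(D_1+D_2+D_3)$, so that $u_\Delta=(\frac23,\frac23,\frac23,1)$. Every $4$-subset is a $\IZ$-basis, yet the relation is $v_1+v_2+v_3=v_4+v_5$.

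In general $\sigma_0$ is the cone of the affine cone over $\IP^{p-1}\times\IP^{q-1}$ embedded by $\mathcal{O}(1,1)$, times $\IR_{\ge0}^{n+1-p-q}$. All of these must be bounded, and that is exactly the paper's $(p,q)$-partition case.

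You can close this inside your framework. Assume $q\le p$, set $d=p+q-1$, and take $\xi=s\sum_{j\in I^-}v_j+c\sum_{k\notin I^+\cup I^-}v_k$. Then $A(\xi)\le qs+(n-d)c$ and $n!\,\vol(\sigma_0^\vee\cap\{\xi\le1\})=\binom{d-1}{q-1}s^{-d}c^{-(n-d)}$. Optimizing over $s$ and $c$ gives $\hvol\le n^n(q/d)^d\binom{d-1}{q-1}$, and one checks this is at most $2(n-1)^n$, with $p=q=2$ the extreme case.

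The scaling is essential. The unscaled choice $s=c=1$ gives $\binom{d-1}{q-1}(n+1-p)^n$. For $p=q=3$ this equals $6(n-2)^n$, which exceeds $2(n-1)^n$ once $n\ge 17$.
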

This can be proved using the same method of 
of Moraga-S\"{u}ss \cite{MS24}, together with the observation that the inclusion of extra boundary toric divisor actually improves the estimate of the local volume. For the reader's convenience, we sketch the proof. Of course, if $X$ itself is $\IQ$-Gorenstein, the estimate immediately follows from the result of Moraga-S\"{u}ss. 
\begin{proof}
    Let $\sigma\subset N\otimes \IR\cong \IR^n$ be the cone with $X_\sigma=X$. Let $\{\rho\}$ be the set of 1-dimensional rays of $\sigma$ and $\sigma(1):=\{v_\rho\}$  the primitive lattice point on $\rho$. Assume $\Delta=\sum_\rho c_\rho \Delta_\rho$ with $c_\rho\in [0,1)$ where $\Delta_\rho$ denotes the toric divisor on $X=X_\sigma$ corresponding to the ray $\rho$. Let $\sigma^\vee\subset M\otimes \IR$ be the dual cone of $\sigma$.      
    Since $(X, \Delta)$ is $\IQ$-Gorenstein, the exists
    $u\in \sigma^\vee\cap \IQ^n$ such that $\langle u, v_\rho\rangle=1-c_\rho$.
    If we set $v'_\rho=\frac{v_\rho}{1-c_\rho}$, then $\langle u, v'_\rho\rangle=1$. Set $\mathfrak{v}=\{v_\rho; \rho\in \sigma(1)\}$ and $\mathfrak{v}'=\{v'_\rho; \rho\in \sigma(1)\}$. 
    Let $P'=\mathrm{conv}(\mathfrak{v}')$ denote the convex hull of $v'_\rho$. Then $P'$ is an $(n-1)$-dimensional polytope contained in the hyperplane $\langle u, \cdot\rangle= 1$. 
    Let $\ell\in \IQ_{\geq1}$ be the rational number such that $\ell u$ is a primitive lattice point. Up to unimodular equivalence, we can assume that $\ell u=(0,1)\in \IZ^{n-1}\times \IZ$ and $\sigma=\IR_{\ge 0}\cdot (P'\times \{\ell\})$. With this assumption, we write $v_\rho=(w_\rho, y_\rho)\in \IZ^{n-1}\times \IZ_{>0}$ which then satisfies $\frac{y_\rho}{\ell(1-c_\rho)}=1$ for any $\rho\in \sigma(1)$. 

    Recall that the Santal\'{o} point $\chi=\chi(P')$ of $P'$ is the point in the interior of $P'$ that minimizes the volume of the dual polytope $P'^y:=(P'-y)^\vee$ among all points $y$ in the interior of $P'$. 
    By the same proof as in \cite[Lemma 4.2]{MS24}, we get:  
    \begin{equation*}
        \hvol(x;X, \Delta)=\frac{(n-1)!}{\ell}\vol(P'^\chi). 
    \end{equation*} 
    To estimate this volume, we consider the Mahler volume (also called the product volume):
    \begin{equation*}
        \mathcal{MV}(P')=\vol(P')\vol(P'^\chi).
    \end{equation*}
     
    Denote by $|\mathfrak{v}|=|\mathfrak{v}'|$ the number of elements of $\mathfrak{v}$ or $\mathfrak{v}'$.  
    We consider three cases:
    \begin{enumerate}
        \item $|\mathfrak{v'}|=(n-1)+1=n$, then $P'$ is an $(n-1)$-dimensional simplex contained in the hyperplane $\langle u, \cdot\rangle=1$. 
         Because the Mahler volume 
         is invariant under affine transformations, we know that 
         \begin{equation*}
             \mathcal{MV}(P')=\frac{n^n}{((n-1)!)^2}
         \end{equation*}
         which is the Mahler volume of the standard simplex. 
         Denote by $C$ (resp. $C'$) the convex hull of $\{v_\rho; \rho\in \sigma(1)\}\cup \{0\}$ (resp. $\{v'_\rho; \rho\in \sigma(1)\}\cup \{0\}$).
         Then $|v'_\rho|=\left|\frac{v_\rho}{1-c_\rho}\right|\ge |v_\rho|$ and hence $C\subset C'$. So we have $\vol(C)\le \vol(C')= \frac{\ell}{n}\vol(P')$.      
         Note that $C$ is $n$-dimensional lattice polytope that generates the cone $\sigma$, i.e. $\sigma=\IR_{\ge 0}C$.  
         Since $x\in X$ is not a regular point, $\sigma$ is not unimodular equivalent to the cone $\IR_{\ge 0}^n$. We get $\vol(C)\ge \frac{2}{n!}$.
         So we get $\vol(P')\ge \frac{2}{(n-1)!\ell}$ and hence
        \begin{equation*}
            \hvol(x; X, \Delta)=\frac{\mathcal{MV}(P')}{\vol(P')}\frac{(n-1)!}{\ell}\le \frac{n^n}{2}<2(n-1)^n. 
        \end{equation*}
        \item $|\mathfrak{v'}|=(n-1)+2=n+1$. 
         Then by Radon's theorem, 
        $P'$ has a $(p,q)$-partition with $p+q=n-1$. This means that there exists a partition of $\{v'_\rho; \rho\in \sigma(1)\}$ into two non-empty set $A'_1$ and $A'_2$ such that $Q'_i=\mathrm{Conv}(A'_i)$, $i=1,2$ are simplices of dimension $p$ and $q$ respectively, which intersect at a unique point (called the Radon point). Let $p>0$ be the minimal number such that $P$ admits a $(p,q)$-partition. 
         By \cite[Theorem 5.1]{AFZ19}, we know that
        \begin{equation*}
           \mathcal{MV}(P')\le \frac{(p+1)^{p+1}(q+1)^{q+1}}{(n-1)!p!q!}.
        \end{equation*}     
        By the same argument as in \cite[Lemma 4.7]{MS24}, we get a triangulation of $P'$ into $(p+1)$ full-dimensional simplices whose vertices are contained in $\{v'_\rho; \rho\in \sigma(1)\}$. 
      Since the volume of each one of these simplices is at least the volume of the standard lattice simplex, we get:
        $\vol(P')\ge \frac{p+1}{\ell(n-1)!}$ and hence:
        \begin{eqnarray*}
            \hvol(x; X, \Delta)&=&\frac{\mathcal{MV}(P')}{\vol(P')}\frac{(n-1)!}{\ell}\le (n-1)!\frac{(p+1)^p(q+1)^{q+1}}{p!q!}\\
            &\le& 2(n-1)^{n-1}(q+1)\le 2(n-1)^n. 
        \end{eqnarray*}
        The second to last inequality follows from \cite[Lemma 3.4]{LM25} (see also \cite[Lemma 4.10]{MS24}). The last inequality follows because $q=n-1-p\le n-2$. 
        \item $|\mathfrak{v}'|\ge n+2$. There exists a polytope $Q'$ spanned by $n+1$ points of $\mathfrak{v}'$. If $z\in Q'$ is the Santal\'{o} point of $Q'$, then $\vol(P'^\chi)\le \vol(P'^z)\le \vol(Q'^z)$. Then the conclusion follows from the estimate in the second case.        
    \end{enumerate}
\end{proof}

\section{Log Fano pairs with large Seshadri constants}

Let $(X, \Delta)$ be a log Fano pair with $X$ smooth. Let $Z\subset X$ be a smooth codimension $r$ variety with a trivial normal bundle. 
Consider the blowup of $X$ along $Z$: $\pi: X'=\mathrm{Bl}_ZX\rightarrow X$. Let $E$ denote the inverse image $\pi^{-1}(Z)$ and $\Delta'$ be the strict transform of $\Delta$. For the main result in this appendix, we will assume that the following two conditions hold:
\begin{enumerate}
    \item $Z\not\subseteq \mathrm{Supp}(\Delta)$ and $(Z, \Delta|_Z)$ is a log Fano pair. 
    \item $(X', \Delta')$ has klt singularities. 
\end{enumerate}
For our application, we are interested in two cases:
\begin{itemize}
    \item (Case 1) $Z$ is a general member of a covering family of embedded minimal rational curves of degree $-K_X\cdot Z=2$. 
    \item (Case 2) There exists a Zariski open set $U\subset X$ and a smooth fibration $\pi: U\rightarrow Y$ with 1-dimensional base such that $Z$ is a general fiber. 
\end{itemize}
In case 1, since $Z$ is a general smooth curve in a covering family, we can assume that $Z$ intersects transversely with $\Delta$ at smooths point of $\Delta$. In case 2, $Z$ the blowup morphism is the identity and $(X', \Delta')=(X, \Delta)$. In both cases, one can easily verify the two conditions above. 

We have the following logarithmic  version of results about Fano varieties with large Seshadri constants from \cite{BS09, LZ18, LM25}. See Remark \ref{rmk-logSe}. 

\begin{prop}
\label{prop:seshadri-for-log-fano}
    Let $(X, \Delta)$ be a log Fano pair with $X$ smooth, and let $Z$ be a smooth subvariety with a trivial normal bundle that satisfies the two conditions above.
    If the Seshadri constant $\epsilon(-K_X-\Delta; Z)$ is at least $r+1$ where $r$ is the codimension $Z$, then $X\cong \IP^{r}\times Z$. 
\end{prop}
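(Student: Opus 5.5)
We prove the statement by adapting the argument of \cite{BS09, LZ18, LM25} for Fano manifolds with large Seshadri constants to the log setting. Set $L:=-K_X-\Delta$ and $\epsilon:=\epsilon(L;Z)\ge r+1$. On $X'=\bl_Z X$ we have $K_{X'}=\pi^*K_X+(r-1)E$, and $\Delta'=\pi^*\Delta-\mathrm{mult}_Z(\Delta)E$ with $\mathrm{mult}_Z(\Delta)=0$ because $Z\not\subseteq\Supp(\Delta)$. Hence
$$
-K_{X'}-\Delta'=\pi^*L-(r-1)E=\bigl(\pi^*L-(r+1)E\bigr)+2E .
$$
Since $\pi^*L-xE$ is nef for $0\le x\le\epsilon$, and ample for $x<\epsilon$, the class $-(K_{X'}+\Delta')-2E=\pi^*L-(r+1)E$ is nef. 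In addition, $-(K_{X'}+\Delta')$ is ample because $r-1<\epsilon$. So $(X',\Delta')$ is a klt log Fano pair by condition (2).

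The next step is to study the fibres of $E=Z\times\IP^{r-1}\to Z$. Let $\ell$ be a line in a fibre $\IP^{r-1}$. Then $E\cdot\ell=-1$ and $\pi^*L\cdot\ell=0$, so $(\pi^*L-(r+1)E)\cdot\ell=r+1$ and $-(K_{X'}+\Delta')\cdot\ell=r-1$. The goal is to produce a second contraction. First, suppose $\epsilon>r+1$; then $\pi^*L-(r+1)E$ is ample. I would rule this out by intersecting with a curve in $E$: using triviality of the normal bundle, the lines in $E$ deform in the $r$-dimensional fibres of $E\to Z$ only, and a degree computation on $\bar{NE}$ will give a contradiction. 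The alternative is to follow \cite{LZ18}, where $\epsilon>r+1$ is excluded by a volume comparison. The precise version of this exclusion I expect to get from those references in the log form.

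So assume $\epsilon=r+1$, and set $M:=\pi^*L-(r+1)E$, which is nef and not ample. Since $(X',\Delta')$ is log Fano, the cone and contraction theorems apply, so $M$ is semiample, and the face $M^\perp$ is $K_{X'}+\Delta'$-negative and gives a contraction $\varphi\colon X'\to W$. On $M^\perp$ we have $-(K_{X'}+\Delta')=2E$, so every contracted curve $\gamma$ satisfies $E\cdot\gamma>0$. Let $\gamma$ be a rational curve contracted by $\varphi$ and meeting $E$. Because $E|_E=\CO(-1)$ on the fibres $\IP^{r-1}$, the curve $\gamma$ is not contained in $E$ (the case of curves inside $E$ needs care and is handled via the product structure of $E$). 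Following \cite{BS09}, I would show that $\varphi$ restricted to $E$ is the projection $E=Z\times\IP^{r-1}\to\IP^{r-1}$, possibly after composing with the first projection. The main obstacle is exactly this step: showing that $\varphi$ is a $\IP^r$-bundle, or more precisely that its fibres are the strict transforms $\IP^r$ of $r$-dimensional subvarieties through $Z$ transverse to it.

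Concretely, I would use length estimates: contracted rational curves satisfy $-(K_{X'}+\Delta')\cdot\gamma\le 2\dim+\ldots$ by Ionescu--Wiśniewski in its log version. Combined with $-(K_{X'}+\Delta')\cdot\gamma=2E\cdot\gamma\ge2$, this should give that $\varphi$ has fibres of dimension at least $r$ meeting $E$ in a point, with $\Delta'$ trivial along general fibres. Hence the general fibre is $\IP^r$ with $E$ restricting to a hyperplane (degree computation $M\cdot$, $E$ of degree one), and $\varphi$ is a $\IP^r$-bundle over $W\cong Z$ with section $E$, in the style of \cite[Section 3]{LM25}. The positivity of $(Z,\Delta|_Z)$ from condition (1) enters here, to ensure $Z$ is rationally connected and the bundle is trivial. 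Pushing back via $\pi$, the blow-down identifies $X$ with the $\IP^r$-bundle contracted along $E$. Triviality of $N_{Z/X}$ then forces the bundle to be trivial, so $X\cong\IP^r\times Z$. In Case 2, where $\pi$ is the identity ($r=1$), the same argument is carried out directly on $X$.
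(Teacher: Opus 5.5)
There is a genuine gap at the central step: you have misidentified the contraction $\varphi$ defined by $M$. Write $p_1,p_2$ for the projections of $E\cong Z\times\IP^{r-1}$ (this uses that $N_{Z/X}$ is trivial). Then $\CO_E(-E)\cong p_2^*\CO_{\IP^{r-1}}(1)$ and $\pi^*L|_E=p_1^*(L|_Z)$, so $M|_E$ is \emph{ample} on $E$.

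Consequences of this ampleness:
\begin{itemize}
\item No curve in $E$ is contracted, so $\varphi|_E$ is finite. It is therefore neither the projection to $\IP^{r-1}$ nor the projection to $Z$.
\item Every contracted curve has $E\cdot\gamma>0$. Hence a positive-dimensional fibre component $F$ meets $E$ but is not contained in it, so $\dim(F\cap E)=\dim F-1$. Finiteness of $\varphi|_E$ forces this to be $0$, so all nontrivial fibres are curves.
\item For $r\ge2$ (e.g.\ Case 1, where $r=n-1$), a fibre $\IP^r$ on which $E$ cuts out a hyperplane would contain a contracted $\IP^{r-1}\subset E$. Also, ``a $\IP^r$-bundle over $W\cong Z$ with section $E$'' is impossible, since $\dim E=n-1>n-r=\dim Z$.
\item Your picture contradicts your own numerics. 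On a general fibre $F\cong\IP^r$ with $\Delta'|_F=0$ and $E|_F=\CO(1)$, adjunction would give $\CO(r+1)=-K_F=-(K_{X'}+\Delta')|_F=\CO(2)$.
\end{itemize}
In the expected model $X'=\mathrm{Bl}_p\IP^r\times Z$, $\varphi$ is the $\IP^1$-bundle $X'\to\IP^{r-1}\times Z\cong E$, whose fibres are strict transforms of lines through $p$. Separately, your exclusion of $\epsilon>r+1$ is left as a hope, but it is not needed. Take $M=\pi^*L-\epsilon E$, which is nef and not ample; on $M^\perp$ one has $-(K_{X'}+\Delta')\equiv(\epsilon-r+1)E$ with $\epsilon-r+1\ge2$.

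What is missing is the proof that $\varphi|_E$ is a closed embedding, and the steps that follow from it.
\begin{itemize}
\item Since $mM-E-(K_{X'}+\Delta')=(m+1)\pi^*L-(m\epsilon+r)E$ is ample (as $r<\epsilon$), Kawamata--Viehweg gives $H^1(X',mM-E)=0$. So $H^0(X',mM)\to H^0(E,mM|_E)$ is surjective, and ampleness of $M|_E$ makes $\varphi|_E$ an embedding (after a small perturbation in the $\IR$-divisor case).
\item Each nontrivial fibre is then a single curve $\gamma$ meeting $E$ transversally in one point. So $E\cdot\gamma=1$ and $-(K_{X'}+\Delta')\cdot\gamma=\epsilon-r+1\ge2$.
\item The birational case is excluded by the optimal bend-and-break bound $-(K_{X'}+\Delta')\cdot\gamma<2$ of \cite{JLR26}. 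The Ionescu--Wi\'sniewski inequality you invoke concerns $K_{X'}$ and does not control $\Delta'\cdot\gamma$ when $\gamma\subset\Supp\Delta'$; in any case you used it to reach the wrong fibre dimension.
\item So $\varphi$ is of fibre type onto $Y\cong E\cong Z\times\IP^{r-1}$, and all fibres are $\IP^1$ by \cite[Lemma 6]{LZ18}. Thus $X'=\IP_Y(\CE)$ with $E$ a section and $0\to\CO_Y\to\CE\to p_2^*\CO_{\IP^{r-1}}(-1)\to0$.
\item This extension splits because $H^1(Y,p_2^*\CO_{\IP^{r-1}}(1))=H^1(Z,\CO_Z)\otimes H^0(\IP^{r-1},\CO(1))=0$, by condition (1). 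Hence $X'\cong\mathrm{Bl}_p\IP^r\times Z$ and $X\cong\IP^r\times Z$.
\end{itemize}
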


\begin{proof}[Proof of Proposition \ref{prop:seshadri-for-log-fano}]
     We continue to use the notation at the beginning of this appendix and use a similar line of argument as in \cite{LZ18} with an important input from \cite{JLR26}.  
    
    Set $L=-K_X-\Delta$ and $D=\pi^*(-K_X-\Delta)-\epsilon E$ where $\epsilon=\epsilon(L; Z)$. Denote by $\Delta'$ the strict transform of $\Delta$ under $\pi$. Recall that we assumed that $(X', \Delta')$ has klt singularities. Note that
    \begin{equation*}
        D-(K_{X'}+\Delta')=2\left(\pi^*(-K_X-\Delta)-\frac{\epsilon+a}{2} E\right)
    \end{equation*}
    is ample where  
    $a=a(E; X, \Delta)=a(E; X)-\ord_E(\Delta)\le r-1<r+1\le \epsilon$.  
    By the basepoint-free theorem for $\IR$-divisors (see \cite[Theorem 17.1]{Fuj11}), we know that $D$ is semiample. In other words (see \cite[Lemma 4.13]{Fuj11}), there exists a morphism $\Phi: X'\rightarrow Y$ and an ample divisor $A=\sum_i c_i A_i$ with $c_i>0$ and $A_i$ ample Cartier divisors such that $D=\Phi^*A$. We claim that $\Phi|_E: E\rightarrow Y$ is a closed embedding. To see this, first assume that $D$ is a $\IQ$-Cartier divisor. Then for $m\ge 1$ such that $mD$ is Cartier, we have:
    \begin{equation*}
        m D-E-(K_{X'}+\Delta')=(m+1)\left(\pi^*(-K_X-\Delta)-\frac{m\epsilon+1+a}{m+1}E\right)
    \end{equation*}
    is ample since $(m\epsilon+1+a)/(m+1)<\epsilon$. By the Kawamata-Viehweg vanishing theorem, $H^1(X', m D-E)=0$. Therefore the natural map $H^0(X', mD)\rightarrow H^0(E, m D|_E)$ is surjective for all sufficiently large and divisible $m$. Moreover, since $E\cong Z\times \IP^{r-1}$ and $\CO_E(-E)=\CO_{\IP^{r-1}}(1)$, it is easy to see that $D|_E$ is ample. So $\Phi|_E: E\rightarrow Y$ is a closed embedding.   
    If $D$ is just an $\IR$-divisor, we can choose $0<\delta_i\ll 1$ such that $0<c_i-\delta_i\in \IQ$ and $D'=\Phi^*\left(\sum_i (c_i-\delta_i)A_i\right)$ satisfies the property that $D'-E-(K_{X'}+\Delta')$ remains ample. Then for sufficiently large and divisible $m$, 
    $$
    mD'-E-(K_{X'}+\Delta')=(m-1)D'+\left(D'-E-(K_{X'}+\Delta')\right)
    $$
    is ample and we can apply the above argument to $D'$ and we conclude that $\Phi|_E: E\rightarrow Y$ is indeed a closed embedding with $E\cong \IP(N^\vee_{Z/X})\cong Z\times \IP^{r-1}$. Moreover, we get $-(K_{X'}+\Delta') \sim_{\Phi,\IR} \lambda E$ with $\lambda=\epsilon-a\ge (r+1)-(r-1)=2$. 

    Suppose $\Phi$ contracts a curve $C$ to a point $y\in Y$. Since $\Phi|_E$ is a closed embedding, the scheme-theoretic intersection $E\cap \Phi^{-1}(y)$ is a reduced closed point. This implies $\Phi^{-1}(y)=C$ (i.e. nontrivial each fiber is 1-dimensional) and $C$ intersects $E$ transversely at a single point. In particular $C\cdot E=1$. 
    So we get $-(K_{X'}+\Delta')\cdot C=\lambda E\cdot C=\lambda\ge 2$. Thanks to the recent result about optimal bounds bend-and-break applied to extremal contractions (\cite[Theorem 1.2]{JLR26}), if $\Phi$ is birational, then $-(K_{X'}+\Delta')\cdot C<\dim C+1=2$ and we have a contradiction. 
    So we conclude that $\Phi$ must be of fiber type and $\dim Y=n-1$. 

    The rest of argument follows \cite{LZ18}. 
    Recall that from above $\Phi|_E: E\rightarrow Y$ is a closed embedding. We conclude that $Y\cong E\cong \IP(N^\vee_{Z/X})\cong Z\times \IP^{r-1}$. A general fiber of $\Phi$, being $K_{X'}+\Delta'$-negative, is a smooth rational curve. For any fiber $C=\Phi^{-1}(y)$ as above, consider the normal surface $S$ from above. Then the morphim $\Phi|_S: S\rightarrow \Phi(S)$ is of fiber type. All fibers of $\Phi|_S$ are generically reduced and irreducible. 
    The generic fiber of $\Phi|_S: S\rightarrow \Phi(S)$ is isomorphic to $\IP^1$. 
    By \cite[Lemma 6]{LZ18}, we have $C\cong \IP^1$. This implies that $\Phi: X'\rightarrow Y$ is a smooth $\IP^1$-fibration. 

    Note that $s=\Phi|_E^{-1}: Y\rightarrow E$ gives a section of $\Phi$. Then there exists a rank 2 vector bundle $\CE$ over $Y$ such that $X'=\IP_Y(\CE)$. The section $E$ corresponds to a surjection $\CE\rightarrow \CL$ for some line bundle $\CL$ on $Y$. Denote the kernel of this surjection by $\CK$. Then $\CO_Y(-1)=s^*N_{E/X'}\cong \CL\otimes \CK^{-1}$. Since $\CE$ is determined up to twisting of a line bundle. We may assume $\CK=\CO_Y$ and $\CL=\CO_Y(-1)=p_2^*\CO_{\IP^{-1}}(-1)$ with the projection $p_2: Y\rightarrow \IP^{r-1}$. Moreover, we have the following short exact sequence:
    \begin{equation*}
        0\rightarrow \CO_Y\rightarrow \CE\rightarrow \CO_Y(-1)\rightarrow 0.
    \end{equation*}
    Since $(Z, \Delta|_Z)$ is assumed to be a log Fano pair, we have $H^1(Z, \CO_Z)=0$ by using Kawamata-Viehweg vanishing theorem. So we get $\mathrm{Ext}^1(\CL, \CO_Y)=H^1(Z\times \IP^{r-1}, p_2^*\CO_{\IP^{r-1}}(1))=H^1(Z, \CO_Z)\otimes H^0(\IP^{r-1}, \CO_{\IP^{r-1}}(1))=0$ and hence the above exact sequence actually splits. Therefore $\CE\cong \CO_Y\oplus \CO_Y(-1)$ and $X'=\IP(\CO_Y\oplus \CO_Y(-1))\cong \IP(\CO_{\IP^{r-1}}\oplus\CO_{\IP^{r-1}}(-1))\times Z=\mathrm{Bl}_p\IP^r\times Z$ is isomorphic to $\IP^r\times Z$ blowup along $\{p\}\times Z$. So $X\cong \IP^r\times Z$. 
\end{proof}

\begin{rmk}\label{rmk-logSe}
In contrast to the absolute case (i.e. without boundary divisor) in \cite{LZ18} and \cite[Proposition 2.8]{LM25}, the above result is not true if we just assume that the Seshadri constant $\epsilon(-K_X-\Delta; Z)$ is greater than the codimension $r$ of $Z$ in $X$. 
Indeed, one has the following counterexample.
Consider the Hirzebruch surface $X=\IP(\CO_{\IP^1}(1)\oplus \CO_{\IP^1})$ with the natural projection $\pi: X\rightarrow \IP^1$, and let $E$ be the section with self-intersection $-1$.
Set $\Delta=c E$ with $c\in (0,1)$ and $Z=\pi^{-1}(\{pt\})\cong \IP^1$ a fiber. Then it is easy to show that $(X, \Delta)$ is a log Fano pair and the Seshadri constant $\epsilon(-K_X-\Delta; Z)=1+c>1=\mathrm{codim}(Z)$.  But $X$ is not the product $\IP^1\times \IP^1$. The cause for this difference is that, in the proof for the absolute case, one can show by using \cite[Lemma 5]{LZ18} that $-K_{X'}\cdot C\le 1$ compared to the inequality $-(K_{X'}+\Delta')\cdot C<2$ used in the above proof.

\end{rmk}

\bibliographystyle{alpha}
\bibliography{ref}

\begin{thebibliography}{CMSB02}

\bibitem[AB24]{AB24}
Rolf Andreasson and Robert~J. Berman.
\newblock Sharp bounds on the height of {K}-semistable {F}ano varieties {I}, the toric case.
\newblock {\em Compos. Math.}, 160(10):2366--2406, 2024.

\bibitem[AB25]{AB25}
Rolf Andreasson and Robert~J. Berman.
\newblock Sharp bounds on the height of {K}-semistable {F}ano varieties {II}, the log case.
\newblock {\em J. \'Ec. polytech. Math.}, 12:983--1018, 2025.

\bibitem[AFZ19]{AFZ19}
Matthew Alexander, Matthieu Fradelizi, and Artem Zvavitch.
\newblock Polytopes of maximal volume product.
\newblock {\em Discrete. Comput. Geom.}, 62:583--600, 2019.

\bibitem[BBJ21]{BBJ}
Robert~J. Berman, S{\'e}bastien Boucksom, and Mattias Jonsson.
\newblock A variational approach to the {Yau}-{Tian}-{Donaldson} conjecture.
\newblock {\em J. Am. Math. Soc.}, 34(3):605--652, 2021.

\bibitem[BDPP13]{BDPP13}
S\'ebastien Boucksom, Jean-Pierre Demailly, Mihai P{\u a}un, and Thomas Peternell.
\newblock The pseudo-effective cone of a compact {K}\"ahler manifold and varieties of negative {K}odaira dimension.
\newblock {\em J. Algebraic Geom.}, 22(2):201--248, 2013.

\bibitem[Ber25]{Ber25}
Robert~J. Berman.
\newblock On {K}-stability, height bounds and the {M}anin-{P}eyre conjecture.
\newblock {\em Pure Appl. Math. Q.}, 21(3):931--970, 2025.

\bibitem[BJ20]{BJ20}
Harold Blum and Mattias Jonsson.
\newblock Thresholds, valuations, and {K}-stability.
\newblock {\em Adv. Math.}, 365:57, 2020.
\newblock Id/No 107062.

\bibitem[BM87]{BM87}
Shigetoshi Bando and Toshiki Mabuchi.
\newblock Uniqueness of {Einstein} {K{\"a}hler} metrics modulo connected group actions.
\newblock Algebraic geometry, {Proc}. {Symp}., {Sendai}/{Jap}. 1985, {Adv}. {Stud}. {Pure} {Math}. 10, 11-40 (1987)., 1987.

\bibitem[Bou02]{B02}
S{\'e}bastien Boucksom.
\newblock On the volume of a line bundle.
\newblock {\em Int. J. Math.}, 13(10):1043--1063, 2002.

\bibitem[BS09]{BS09}
Thomas Bauer and Tomasz Szemberg.
\newblock Seshadri constants and the generation of jets.
\newblock {\em J. Pure Appl. Algebra}, 213(11):2134–2140, 2009.

\bibitem[CC97]{CC97}
Jeff Cheeger and Tobias~H. Colding.
\newblock On the structure of spaces with {R}icci curvature bounded below. {I}.
\newblock {\em J. Differential Geom.}, 46(3):406--480, 1997.

\bibitem[CD15]{CD15}
Cinzia Casagrande and St\'{e}phane Druel.
\newblock Locally unsplit families of rational curves of large anticanonical degree on {F}ano manifolds.
\newblock {\em Int. Math. Res. Not. IMRN}, (21):10756--10800, 2015.

\bibitem[CFH14]{CFH14}
Yifei Chen, Baohua Fu, and Jun-Muk Hwang.
\newblock Minimal rational curves on complete toric manifolds and applications.
\newblock {\em Proc. Edinb. Math. Soc. (2)}, 57(1):111--123, 2014.

\bibitem[CMSB02]{CMSB02}
Koji Cho, Yoichi Miyaoka, and N.I. Shepherd-Barron.
\newblock Characterizations of projective space and applications to complex symplectic manifolds.
\newblock In {\em Higher dimensional birational geometry (Kyoto, 1997), Adv. Stud. Pure Math., 25}, pages 1--88. Mathematical Society of Japan, 2002.

\bibitem[CRZ19]{CRZ}
Ivan~A. Cheltsov, Yanir~A. Rubinstein, and Kewei Zhang.
\newblock Basis log canonical thresholds, local intersection estimates, and asymptotically log del {Pezzo} surfaces.
\newblock {\em Sel. Math., New Ser.}, 25(2):36, 2019.
\newblock Id/No 34.

\bibitem[DH17]{DH17}
Thomas Dedieu and Andreas H\"{o}ring.
\newblock Numerical characterisation of quadrics.
\newblock {\em Algebraic Geometry}, 4(1):120--135, 2017.

\bibitem[FO18]{FO18}
Kento Fujita and Yuji Odaka.
\newblock On the {K}-stability of {Fano} varieties and anticanonical divisors.
\newblock {\em T{\^o}hoku Math. J. (2)}, 70(4):511--521, 2018.

\bibitem[Fuj11]{Fuj11}
Osamu Fujino.
\newblock Fundamental theorems for the log minimal model program.
\newblock {\em Publ. Res. Inst. Math.Sci.}, 47:727--789, 2011.

\bibitem[Fuj18]{Fvol}
Kento Fujita.
\newblock Optimal bounds for the volumes of {K{\"a}hler}-{Einstein} {Fano} manifolds.
\newblock {\em Am. J. Math.}, 140(2):391--414, 2018.

\bibitem[HM26]{HM26}
Shouhei Honda and Andrea Mondino.
\newblock Gap phenomena under curvature restrictions.
\newblock {\em Indag. Math., New Ser.}, 37(3):744--763, 2026.

\bibitem[JLR26]{JLR26}
Eric Jovinelly, Brian Lehmann, and Eric Riedl.
\newblock Optimal bounds in bend-and-break.
\newblock {\em Forum of Mathematics, Pi}, 14:e16:arXiv:2509.08065, 2026.

\bibitem[KL17]{KL17}
Alex K\"uronya and Victor Lozovanu.
\newblock Positivity of line bundles and {N}ewton-{O}kounkov bodies.
\newblock {\em Doc. Math.}, 22:1285--1302, 2017.

\bibitem[Kol96]{K96}
J{\'a}nos Koll{\'a}r.
\newblock {\em Rational curves on algebraic varieties}, volume~32 of {\em Ergeb. Math. Grenzgeb., 3. Folge}.
\newblock Berlin: Springer-Verlag, 1996.

\bibitem[Laz04]{Lbook}
Robert Lazarsfeld.
\newblock {\em Positivity in algebraic geometry. {I}. {Classical} setting: line bundles and linear series}, volume~48 of {\em Ergeb. Math. Grenzgeb., 3. Folge}.
\newblock Berlin: Springer, 2004.

\bibitem[Li11]{Li11}
Chi Li.
\newblock Greatest lower bounds on {Ricci} curvature for toric {Fano} manifolds.
\newblock {\em Adv. Math.}, 226(6):4921--4932, 2011.

\bibitem[Li17]{Li17}
Chi Li.
\newblock Yau-{Tian}-{Donaldson} correspondence for {K}-semistable {Fano} manifolds.
\newblock {\em J. Reine Angew. Math.}, 733:55--85, 2017.

\bibitem[LL19]{LL19}
Chi Li and Yuchen Liu.
\newblock K\"{a}hler-{E}instein metrics and volume minimization.
\newblock {\em Adv. Math.}, 341:440--492, 2019.

\bibitem[LM09]{LM09}
Robert Lazarsfeld and Mircea Musta{\c{t}}{\u{a}}.
\newblock Convex bodies associated to linear series.
\newblock {\em Ann. Sci. \'{E}c. Norm. Sup\'{e}r. (4)}, 42(5):783--835, 2009.

\bibitem[LM25]{LM25}
Chi Li and Minghao Miao.
\newblock On the volume of {K}-semistable {F}ano manifolds.
\newblock {\em arXiv preprint arXiv:2506.17420}, 2025.

\bibitem[LZ18]{LZ18}
Yuchen Liu and Ziquan Zhuang.
\newblock Characterization of projective spaces by {Seshadri} constants.
\newblock {\em Math. Z.}, 289(1-2):25--38, 2018.

\bibitem[Miy04]{Miy04}
Y.~Miyaoka.
\newblock Numerical characterisations of hyperquadrics.
\newblock In {\em Complex Analysis in Several Variables-Memorial Conference of Kiyoshi Oka's Centennial Birthday}, volume~42 of {\em Adv. Stud. Pure Math.}, pages 209--235. Math. Soc. Japan, Tokyo, 2004.

\bibitem[MM86]{MM86}
Yoichi Miyaoka and Shigefumi Mori.
\newblock A numerical criterion for uniruledness.
\newblock {\em Ann. of Math. (2)}, 124(1):65--69, 1986.

\bibitem[Mor79]{Mor79}
Shigefumi Mori.
\newblock Projective manifolds with ample tangent bundles.
\newblock {\em Ann. of Math. (2)}, 110(3):593--606, 1979.

\bibitem[MS24]{MS24}
Joaqu\'{i}n Moraga and Hendrik S\"{u}ss.
\newblock Bounding toric singularities with normalized volume.
\newblock {\em Bull. Lond. Math. Soc.}, 56(6):2212--2229, 2024.

\bibitem[Per94]{P94}
G.~Perelman.
\newblock Manifolds of positive {R}icci curvature with almost maximal volume.
\newblock {\em J. Amer. Math. Soc.}, 7(2):299--305, 1994.

\bibitem[Rub08]{R08}
Yanir~A. Rubinstein.
\newblock Some discretizations of geometric evolution equations and the {Ricci} iteration on the space of {K{\"a}hler} metrics.
\newblock {\em Adv. Math.}, 218(5):1526--1565, 2008.

\bibitem[SW16]{SW16}
Jian Song and Xiaowei Wang.
\newblock The greatest {Ricci} lower bound, conical {Einstein} metrics and {Chern} number inequality.
\newblock {\em Geom. Topol.}, 20(1):49--102, 2016.

\bibitem[Sz{\'e}11]{Sz11}
G{\'a}bor Sz{\'e}kelyhidi.
\newblock Greatest lower bounds on the {Ricci} curvature of {Fano} manifolds.
\newblock {\em Compos. Math.}, 147(1):319--331, 2011.

\bibitem[Tia92]{T92}
Gang Tian.
\newblock On stability of the tangent bundles of {Fano} varieties.
\newblock {\em Int. J. Math.}, 3(3):401--413, 1992.

\bibitem[Zha21]{Zhang21}
Kewei Zhang.
\newblock Continuity of delta invariants and twisted {K}\"ahler-{E}instein metrics.
\newblock {\em Adv. Math.}, 388:Paper No. 107888, 25, 2021.

\bibitem[Zha22]{Zhang22}
Kewei Zhang.
\newblock On the optimal volume upper bound for {K}\"ahler manifolds with positive {R}icci curvature (with an appendix by {Y}uchen {L}iu).
\newblock {\em Int. Math. Res. Not. IMRN}, (8):6135--6156, 2022.

\end{thebibliography}
\Addresses
\end{document}